\providecommand{\U}[1]{\protect\rule{.1in}{.1in}}
\newtheorem{theorem}{Theorem}
\newtheorem{corollary}[theorem]{Corollary}
\newtheorem{lemma}[theorem]{Lemma}
\newtheorem{proposition}[theorem]{Proposition}
\newcounter{mnotecount}[section]
\newcommand{\rmnote}[1]{}%{\mnote{#1}}
\begin{document}

\title[Stability of Saddle Solutions]{Stability of the saddle solutions for the Allen-Cahn equation}

\author[Y. Liu]{Yong Liu}
\address{\noindent Y. Liu: Department of Mathematics, University of Science and Technology of
China, Hefei, China}
\email{yliumath@ustc.edu.cn}

\author[K. Wang]{Kelei Wang}
\address{\noindent K. Wang: School of Mathematics and Statistics, Wuhan University, Wuhan,
Hubei, China}
\email{wangkelei@whu.edu.cn}
\author[J. Wei]{Juncheng Wei}
\address{\noindent J. Wei: Department of Mathematics, University of British Columbia,
Vancouver, BC V6T 1Z2, Canada }
\email{jcwei@math.ubc.ca}

\begin{abstract}
We are concerned with the saddle solutions of the Allen-Cahn equation
constructed by Cabr\'{e} and Terra \cite{C,C2} in $\mathbb{R}^{2m}%
=\mathbb{R}^{m}\times\mathbb{R}^{m}$. These solutions vanish precisely on the
Simons cone. The existence and uniqueness of saddle solution are shown in
\cite{C,C2,C1}. Regarding the stability, Schatzman \cite{Sch} proved that the
saddle solution is unstable for $m=1,$ Cabr\'{e} \cite{C1} showed the
instability for $m=2,3$ and stability for $m\geq7$. This has left open the
case of $m=4,5,6$. In this paper we show that the saddle solutions are stable
when $m=4,5,6$, thereby confirming Cabr\'{e}'s conjecture in \cite{C1}. The
conjecture that saddle solutions in dimensions $2m\geq8$ should be global
minimizers of the energy functional remains open.

\end{abstract}

\maketitle

\medskip

AMS 2010 Classification: primary 35B08, secondary 35B06, 35B51, 35J15.

\section{Introduction and statement of the main result}

Allen-Cahn type equation is a model arising from the phase transition theory.
In this paper, we will investigate the stability of the saddle solutions to
the following Allen-Cahn equation:
\begin{equation}
-\Delta u=u-u^{3},\text{ in }\mathbb{R}^{n}.\label{AC}%
\end{equation}
Properties of solutions for this equation have delicate dependence on the
dimension $n$. In the simplest case $n=1,$ we know all the solutions, thanks
to the phase plane analysis technique. In this case, $\left(  \ref{AC}\right)
$ has a heteroclinic solution $H\left(  x\right)  =\tanh\left(  \frac{x}%
{\sqrt{2}}\right)  .$ It is monotone increasing and plays an important role in
the De Giorgi conjecture. As we will see, this function also plays a role in
our later analysis on the stability in higher dimensions. Recall that the De
Giorgi conjecture states that monotone bounded solutions of $\left(
\ref{AC}\right)  $ have to be one-dimensional if $n\leq8.$ This conjecture has
been proved to be true in dimension $n=2$(Ghoussoub-Gui \cite{G}), $n=3$
(Ambrosio-Cabr\'{e} \cite{AmC}). In dimension $4\leq n\leq8,$ Savin \cite{S}
proved it under an additional limiting condition:
\[
\lim_{x_{n}\rightarrow\pm\infty}u\left(  x^{\prime},x_{n}\right)  =\pm1.
\]
Counter examples in dimension $n\geq9$ have been constructed by del
Pino-Kowalczyk-Wei in \cite{M2} using Lyapunov-Schmidt reduction and by us in
\cite{Liu} using Jerison-Monneau program \cite{JM}. We also refer to \cite{M1,
M3,F4,F5,Farina, Farina2, F3} and the references therein for related results
on this subject.

It is commonly accepted that the theory of Allen-Cahn equation has deep
relations with the minimal surface theory. The above mentioned De Giorgi
conjecture is such an example. Our result in this paper, which will be stated
below, indeed also has analogy in the minimal surface theory. To explain this,
let us recall some basic facts from the minimal surface theory. In
$\mathbb{R}^{8},$ there is a famous minimal cone with one singularity at the
origin which minimizes the area, called Simons cone. It is given explicitly
by:%
\[
\left\{  x_{1}^{2}+...+x_{4}^{2}=x_{5}^{2}+...+x_{8}^{2}\right\}  .
\]
The minimality of this cone is proved in \cite{BDG} and this property is
related to the regularity theory of minimal surfaces. More generally, if we
consider the so-called Lawson's cone ($2\leq i\leq j$)
\[
C_{i,j}:=\left\{  \left(  x,y\right)  \in\mathbb{R}^{i}\oplus\mathbb{R}%
^{j}:\left\vert x\right\vert ^{2}=\frac{i-1}{j-1}\left\vert y\right\vert
^{2}\right\}  ,
\]
then for $i+j\leq7,$ $C_{i,j}$ is unstable minimal cone(Simons \cite{Simons}).
For $i+j\geq8,$ and $\left(  i,j\right)  \neq\left(  2,6\right)  ,$ $C_{i,j}$
are area minimizing, and $C_{2,6}$ is not area minimizing but it is one-sided
minimizer. (See \cite{Alen}, \cite{De}, \cite{Lin}, \cite{L} and \cite{Lawson}).

There are analogous objects as the cone $C_{m,m}$ in the theory of Allen-Cahn
equation. They are the so-called saddle-shaped solutions, which are solutions
in $\mathbb{R}^{2m}$ of $\left(  \ref{AC}\right)  $ vanishing exactly on the
cone $C_{m,m}$ (See \cite{Dang,Gui, Sch} for discussion on the dimension 2
case, and Cabr\'{e}-Terra \cite{C,C2} and Cabr\'{e} \cite{C1} for higher
dimension case). We denote them by $U_{m}.$ In this paper, we will simply call
it saddle solution. It has been proved in \cite{C1} that these solutions are
unique in the class of symmetric functions. Furthermore in \cite{C,C2} it is
proved that for $2\leq m\leq3,$ the saddle solution is unstable, while for
$m\geq7,$ they are stable \cite{C1}. It is conjectured in \cite{C1} that for
$m\geq4,$ $U_{m}$ should be stable. In this paper, we confirm this conjecture
and prove the following

\begin{theorem}
\label{main}The saddle solution $U_{m}$ is stable for $m=4,5,6.$
\end{theorem}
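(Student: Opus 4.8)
The plan is to prove stability by exhibiting a positive supersolution of the linearized operator
\[
L:=-\Delta+\left(3U_{m}^{2}-1\right),
\]
i.e. a function $\varphi>0$ on $\mathbb{R}^{2m}$ with $L\varphi\geq0$ in the weak sense. Given such a $\varphi$, any $\xi\in C_{c}^{\infty}(\mathbb{R}^{2m})$ can be written $\xi=\varphi\psi$, and an integration by parts yields the identity
\[
\int_{\mathbb{R}^{2m}}\left(\left\vert \nabla\xi\right\vert ^{2}+\left(3U_{m}^{2}-1\right)\xi^{2}\right)=\int_{\mathbb{R}^{2m}}\left(L\varphi\right)\varphi\psi^{2}+\int_{\mathbb{R}^{2m}}\varphi^{2}\left\vert \nabla\psi\right\vert ^{2}\geq0,
\]
which is precisely the stability inequality. (It is convenient, and harmless, to treat only test functions invariant under the symmetries of $U_{m}^{2}$ — those in $O(m)\times O(m)$ and the swap $x\leftrightarrow y$ — since the first Dirichlet eigenfunction of $L$ on a ball is of this type; the whole problem then lives on the quarter‑plane $\{(s,t):s=|x|,\,t=|y|>0\}$ with weight $(st)^{m-1}$, and $\varphi$ is built with the same symmetries.)

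The supersolution is assembled over two regions determined by the signed distance $d$ to the Simons cone $C_{m,m}$. On $\{|d|\geq\delta_{0}\}$, the properties of $U_{m}$ recalled in \cite{C,C1,C2} — monotonicity and $U_{m}\to\pm1$ at infinity — give $3U_{m}^{2}-1\geq c_{0}>0$, so a constant (or, using monotonicity along rays, $|Z|$ with $Z:=(x,y)\cdot\nabla U_{m}$, which satisfies $LZ=2U_{m}(1-U_{m}^{2})$ and hence $L|Z|\geq0$ off the cone) furnishes a positive supersolution there. On the complementary tube $\{|d|<\delta_{0}\}$, in Fermi coordinates $(d,\zeta)$ with $\zeta\in C_{m,m}$, set
\[
\varphi:=H^{\prime}(d)\,\Phi(\zeta),\qquad H(\tau)=\tanh(\tau/\sqrt{2}),
\]
where $\Phi>0$ is the positive Jacobi field of the Simons cone, i.e. a positive solution on $C_{m,m}\setminus\{0\}$ of $-\Delta_{C_{m,m}}\Phi-\left\vert A_{C_{m,m}}\right\vert^{2}\Phi=0$. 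Such a $\Phi$ exists explicitly, in the homogeneous form $\Phi=|\zeta|^{\gamma}$ with $\gamma$ a real root of the indicial equation $\gamma^{2}+(2m-3)\gamma+|A_{M}|^{2}=0$ ($M$ the link of $C_{m,m}$, $|A_{M}|^{2}=2(m-1)$), and this happens \emph{exactly} when the discriminant $(2m-3)^{2}-8(m-1)\geq0$, that is $4m^{2}-20m+17\geq0$, that is $m\geq4$ — the familiar threshold at which $C_{m,m}$ becomes a stable minimal hypersurface. This is the origin of the dimensional restriction: for $m=2,3$ the roots are complex and no positive Jacobi field exists, consistently with the instability in \cite{C,C2}. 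Moreover the chosen root lies in $(-(2m-3),0)\subset(-(2m-2),0)$, which makes $\varphi=|\zeta|^{\gamma}H^{\prime}(d)$ superharmonic near the cone vertex as well.

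It then remains to verify $L\varphi\geq0$ in the tube and to patch the pieces. Writing $\Delta=\partial_{d}^{2}+\mathcal{M}(d,\zeta)\,\partial_{d}+\Delta_{\{d=\mathrm{const}\}}$, with $\mathcal{M}(d,\zeta)=O\!\left(|A_{C_{m,m}}(\zeta)|^{2}|d|\right)$ since $C_{m,m}$ is minimal, one expands
\[
L\!\left(H^{\prime}(d)\Phi\right)=\left(-H^{\prime\prime\prime}+(3H^{2}-1)H^{\prime}\right)\Phi+H^{\prime}\!\left(-\Delta_{C_{m,m}}\Phi-\left\vert A_{C_{m,m}}\right\vert ^{2}\Phi\right)+\mathcal{E},
\]
where the first bracket is $0$ by the ODE for $H$, the second is $0$ by the choice of $\Phi$, and $\mathcal{E}$ gathers the errors from: (i) replacing $3U_{m}^{2}-1$ by $3H(d)^{2}-1$; (ii) the remainder in $\mathcal{M}$ and the discrepancy between $\Delta_{\{d=\mathrm{const}\}}$ and $\Delta_{C_{m,m}}$; and (iii) the regime $|\zeta|\to0$, where $\Phi$ blows up but the identity of the first paragraph stays legitimate thanks to the Hardy inequality on $\mathbb{R}^{2m}$ ($2m\geq8$). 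The two supersolutions are joined on the overlap $\{\tfrac{1}{2}\delta_{0}\leq|d|\leq\delta_{0}\}$ by taking their minimum where both are supersolutions and interpolating otherwise, absorbing the transition error; a mollification makes the resulting $\varphi$ admissible.

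The hard part is the error estimate (i)--(ii). Since both principal terms above vanish identically, $\mathcal{E}$ must be shown to be nonpositive, or controlled by whatever margin one can engineer in the transition — and this margin is thin precisely for $m=4,5,6$, which is why the cruder comparisons of \cite{C1} only reach $m\geq7$. Closing the estimate requires a sharp expansion of $U_{m}$ near $C_{m,m}$, of the form $U_{m}=H(d)+(\text{explicit first correction})+(\text{small remainder})$, obtained by constructing sub- and supersolutions adapted to the curvature of the cone, together with a definite decay rate for $U_{m}\to\pm1$ at infinity; one then checks, with the exact value $|A_{M}|^{2}=2(m-1)$ and a near‑optimal choice of $\delta_{0}$ and of the transition profile, that $\mathcal{E}\leq0$ for $m=4,5,6$. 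Carrying out these asymptotics, establishing the sign of $\mathcal{E}$, and performing the gluing constitute the core of the proof.
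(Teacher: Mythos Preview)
Your outline captures the correct mechanism --- a positive supersolution built from a Jacobi field of the Simons cone, with the threshold $m\geq4$ coming from the reality of the indicial roots --- and this is indeed the paper's philosophy too. But what you have written is a plan, not a proof: you explicitly defer ``establishing the sign of $\mathcal{E}$'' to the end, and that sign is the entire theorem. Two concrete points. First, by taking $\gamma$ to be an \emph{exact} indicial root you have arranged that both leading terms in your expansion of $L(H'(d)\Phi)$ vanish identically, so there is no margin whatsoever to absorb $\mathcal{E}$; one must take $\gamma$ strictly between the roots (the paper takes the midpoint, $-(n-3)/2$, e.g.\ $-2.5$ when $m=4$). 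Second, even with that margin the error from replacing $3U_m^2$ by $3H(d)^2$ is of the \emph{same} order $|\zeta|^{\gamma-2}$ as the Jacobi gain, so the question reduces to a competition of explicit constants, and for $m=4,5,6$ this competition is genuinely close --- which is exactly why \cite{C1} stops at $m\geq7$. Nothing in your proposal indicates how to win it.

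The paper's execution differs from yours in one decisive technical choice: instead of the approximate profile $H'(d)$, it uses the \emph{actual} derivatives $u_s,u_t$ of the saddle solution as building blocks, setting $\Phi_1=fu_s+hu_t$ with $f,h$ explicit functions behaving like $\pm(s+t)^{-(n-3)/2}$ near the cone. Since $Lu_s=(m-1)s^{-2}u_s$ and $Lu_t=(m-1)t^{-2}u_t$ exactly, the approximation error $3(U_m^2-H(d)^2)$ that dominates your $\mathcal{E}$ simply never appears; the price is that $L\Phi_1$ now involves the second derivatives $u_{ss},u_{st},u_{tt}$ with explicit coefficients, and one must control these in terms of $u_s,u_t$. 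The real content of the paper is a long chain of pointwise inequalities of this type --- e.g.\ $u_su+u_{ss}\geq0$, $u_s+u_t+u_{ss}+u_{st}\geq0$, $-t^{-1}u_t+u_{st}+u_{tt}\geq0$, a sharp bound on $u_s+u_t$, and about fifteen others --- each proved by applying the maximum principle to a carefully chosen auxiliary function on $\Omega=\{s>t>0\}$, often bootstrapping earlier inequalities. These estimates, plus an explicit small correction $\Phi_0$ to handle the region far from the cone, are then fed into a direct (partly numerical) verification that $L(\Phi_0+\Phi_1)\leq0$. None of this machinery has an analogue in your proposal, and it is not clear that the ``sharp expansion of $U_m$ near $C_{m,m}$'' you invoke would be any easier to obtain.
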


As a corollary, Theorem \ref{main} together with the result of Cabr\'{e} tells
us that $U_{m}$ is stable for $m\geq4$ and unstable for $m\leq3.$

We remark that actually $U_{m}$ is conjectured to be a minimizer of the
corresponding energy functional for all $m\geq4.$ But this seems to be
difficult to prove at this moment.

Let us now briefly explain the main idea of the proof. We focus on the case of
$m=4.$ That is, saddle solution in $\mathbb{R}^{8}.$

Suppose $u$ depends only on the variables $s:=\sqrt{x_{1}^{2}+...+x_{m}^{2}}$
and $t:=\sqrt{x_{m+1}^{2}+...+x_{2m}^{2}}.$ Then $\left(  \ref{AC}\right)  $
reduces to
\begin{equation}
-\partial_{s}^{2}u-\partial_{t}^{2}u-\frac{m-1}{s}\partial_{s}u-\frac{m-1}%
{t}\partial_{t}u=u-u^{3}.\label{AC1}%
\end{equation}
Throughout the paper, we use the notation
\[
\Omega:=\left\{  \left(  s,t\right)  :s>t>0\right\}  ,\text{ }\Omega^{\ast
}:=\left\{  \left(  s,t\right)  :s>\left\vert t\right\vert >0\right\}  .
\]
Then the saddle solution satisfies $U\left(  s,t\right)  =-U\left(
t,s\right)  $ and $U>0$ in $\Omega.$ We will use $L$ to denote the linearized
Allen-Cahn operator around $U:$
\[
L\eta:=\Delta\eta-\left(  3U^{2}-1\right)  \eta.
\]
By definition, $U$ is stable if and only if:
\[
\int_{\mathbb{R}^{8}}\left(  \eta\cdot L\eta\right)  \leq0,\text{ for any
}\eta\in C_{0}^{\infty}\left(  \mathbb{R}^{8}\right)  .
\]
To prove Theorem \ref{main}, we would like to construct a positive function
$\Phi$ satisfying
\begin{equation}
L\Phi\leq0\text{ in }\mathbb{R}^{8}.\label{super}%
\end{equation}
It is known that the existence of such a supersolution implies the stability
of $U.$ We define
\begin{align*}
f &  :=\left(  \tanh\left(  \frac{s}{t}\right)  \frac{\sqrt{2}s}{\sqrt
{s^{2}+t^{2}}}+\frac{1}{4.2}\left(  1-e^{-\frac{s}{2t}}\right)  \right)
\left(  s+t\right)  ^{-2.5},\\
h &  :=-\left(  \tanh\left(  \frac{t}{s}\right)  \frac{\sqrt{2}t}{\sqrt
{s^{2}+t^{2}}}+\frac{1}{4.2}\left(  1-e^{-\frac{t}{2s}}\right)  \right)
\left(  s+t\right)  ^{-2.5}.
\end{align*}
Then we set
\[
\Phi:=fU_{s}+hU_{t}+0.00007\left(  s^{-1.8}e^{-\frac{t}{3}}+t^{-1.8}%
e^{-\frac{s}{3}}\right)  .
\]
Here $U_{s},U_{t}$ are the derivatives of $U$ with respect to $s$ and $t.$ We
will prove in Section 3 that $\Phi$ satisfies $\left(  \ref{super}\right)  .$
The choice of $f$ is governed by the Jacobi fields of the Simons cone, which
are of the form $c_{1}\left(  s+t\right)  ^{-2}+c_{2}\left(  s+t\right)
^{-3}.$ Note that $2.5\in\left(  2,3\right)  .$ We also point out that the
admissable constants choosen here are not unique.

The key ingredients of our proof are some estimates of the first and second
derivatives of $U,$ obtained in the next section. These estimates are partly
inspired by the explicit saddle solution in the plane of the elliptic
sine-Gordon equation%
\begin{equation}
-\Delta u=\sin u.\label{sine}%
\end{equation}
The double well potential of this equation is $1+\cos u.$ It can be
checked(see \cite{LW}) that the function
\[
4\arctan\left(  \frac{\cosh\left(  \frac{y}{\sqrt{2}}\right)  }{\cosh\left(
\frac{x}{\sqrt{2}}\right)  }\right)  -\pi
\]
is a saddle solution to $\left(  \ref{sine}\right)  .$ However, in dimension
$2m$ with $m>1,$ (we believe)the saddle solution of $\left(  \ref{sine}%
\right)  $ does not have explicit formula. More generally, one may conjecture
that the saddle solution in dimension $8$ is stable for general Allen-Cahn
type equations of the form
\[
\Delta u=F^{\prime}\left(  u\right)  ,
\]
where $F$ is a double well potential. However, as we shall see later on in
this paper, it seems that the stability of the saddle solution will also
depend on the nonlinearity $F$.(At least, our computations have used the
explicitly formula of the one dimensional heteroclinic solution)

This paper is organized as follows. In Section 2, we obtain some point-wise
estimates for the derivatives of $U$ in dimension $8.$ The key will be the
estimate of $u_{s}+u_{t}.$ In Section 3, we use these estimates to show that
$\Phi$ is a supersolution of the linearized operator in dimension $8.$ In
Section 4, we briefly discuss the case of dimensions $10$ and $12.$

\textbf{Acknowledgement } Y. Liu is partially supported by \textquotedblleft
The Fundamental Research Funds for the Central Universities
WK3470000014,\textquotedblright\ and NSFC no. 11971026. K. Wang is partially
supported by NSFC no. 11871381. J. Wei is partially supported by NSERC of Canada.

\section{Estimates for the saddle solution and its derivatives in
$\mathbb{R}^{8}.$}

In this section, we analyze the saddle solution in dimension $8.$ That of
dimension $10$ and $12$ follows from straightforward modifications.

One of the main difficulties in the proof of the stability stems from the fact
that we don't have an explicit formula for the saddle solution$.$ Hence we
need to estimate $U_{4}$ and its derivatives. This will be the main aim of
this section.

To begin with, we would like to control $U_{4}$ from below and above. For this
purpose, we shall construct suitable sub and super solutions.

Recall that $U_{1}$ is the saddle solution in dimension $2.$ Let
\[
s=\sqrt{x_{1}^{2}+...+x_{4}^{2}},t=\sqrt{x_{5}^{2}+...+x_{8}^{2}}.
\]
Then the function $U_{1}\left(  s,t\right)  $ satisfies
\begin{equation}
-\partial_{s}^{2}U_{1}-\partial_{t}^{2}U_{1}=U_{1}-U_{1}^{3}. \label{U1}%
\end{equation}
In the rest of the paper, $\Delta$ will represent the Laplacian operator in
dimension $8.$ That is, in the $\left(  s,t\right)  $ coordinate,
\[
\Delta:=\partial_{s}^{2}+\partial_{t}^{2}+\frac{3}{s}\partial_{s}+\frac{3}%
{t}\partial_{t}.
\]
Let $H\left(  x\right)  =\tanh\left(  \frac{x}{\sqrt{2}}\right)  $ be the one
dimensional heteroclinic solution:%
\[
-H^{\prime\prime}=H-H^{3}.
\]
It has the following expansion:%
\[
H\left(  x\right)  =1-2e^{-\sqrt{2}x}+O\left(  e^{-2\sqrt{2}x}\right)  ,\text{
for }x\text{ large.}%
\]
Moreover, $\sqrt{2}H^{\prime}=1-H^{2}$.

For simplicity, in the rest of this section, we will also write $U_{4}$ as
$u.$ Recall that bounded solutions of Allen-Cahn equation satisfy the Modica
estimate:%
\begin{equation}
\label{modica}
\frac{1}{2}\left\vert \nabla u\right\vert ^{2}\leq F\left(  u\right)
:=\frac{\left(  1-u^{2}\right)  ^{2}}{4}.
\end{equation}

This inequality will be used frequently later on in our analysis. Note that it
provides an upper bound for the gradient. The lower bound of the gradient
turns out to be much more delicate. Nevertheless, we will prove in this
section that
\begin{equation}
u_{s}u+u_{ss}\geq0\text{ in }\Omega. \label{d}%
\end{equation}
Let $d\left(  s,t\right)  :=\frac{1}{2}u^{2}+u_{s}.$ Then inequality $\left(
\ref{d}\right)  $ implies $\partial_{s}d\geq0$ and hence
\[
d\left(  s,t\right)  \geq d\left(  t,t\right)  ,\text{ in }\Omega.
\]
This inequality will give us a lower bound of $u_{s},$ provided we have some
information of $d\left(  t,t\right)  .$ The proof of $\left(  \ref{d}\right)
$ is quite nontrivial and requires many delicate estimates. It will be one of the main
contents in this section.

Following Cabr\'{e} \cite{C1}, we introduce the new variables
\[
y=\frac{s+t}{\sqrt{2}},z=\frac{s-t}{\sqrt{2}}.
\]
Then the Allen-Cahn equation has the form:
\begin{equation}
-\partial_{y}^{2}u-\partial_{z}^{2}u-\frac{6}{y^{2}-z^{2}}\left(
y\partial_{y}u-z\partial_{z}u\right)  =u-u^{3}. \label{uyz}%
\end{equation}

The estimates obtained in this paper rely crucially on the following maximum
principle, due to Cabr\'{e} \cite{C1}.

\begin{theorem}
[Proposition 2.2 of \cite{C1}]\label{maximum}Suppose $c\geq0$ in $\Omega.$
Then the maximum principle holds for the operator $L-c.$
\end{theorem}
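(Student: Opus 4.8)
The goal is a weak/strong maximum principle for the operator
\[
L - c = \Delta - (3U^2 - 1) - c, \qquad \Delta = \partial_s^2 + \partial_t^2 + \tfrac{3}{s}\partial_s + \tfrac{3}{t}\partial_t,
\]
on the quarter-plane domain $\Omega = \{s>t>0\}$ (interpreted as the cross-section of a domain in $\mathbb{R}^{8}$), with the sign condition $c\geq 0$. Note that the zeroth-order coefficient of $L-c$ is $-(3U^2-1)-c$, which is \emph{not} of a definite sign near the Simons cone where $U$ is small, so the naive maximum principle does not apply directly. The standard device — which I expect is the one used in \cite{C1} — is to factor out a positive function that kills the bad part of the zeroth-order term.

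The plan is as follows. First I would exhibit a positive function $\varphi$ on $\overline{\Omega^{*}}$ (or on a suitable neighbourhood) satisfying $L\varphi \leq 0$; the natural candidate is $\varphi = U_s$ (equivalently $\partial_s U$), since differentiating the Allen–Cahn equation \eqref{AC1} in $s$ and using the structure of $\Delta$ in $(s,t)$ coordinates shows that $U_s$ solves a linearized equation with a favourable lower-order correction coming from the $\tfrac{3}{s}\partial_s$ term; one checks $U_s>0$ in $\Omega$ from the monotonicity properties of the saddle solution established in \cite{C1}. Alternatively one may use the one–dimensional profile $H'$ composed with the distance to the cone, or simply $e^{\lambda z}$ for suitable $\lambda$ in the $(y,z)$ variables \eqref{uyz}. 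Second, given such a $\varphi>0$ with $(L-c)\varphi = L\varphi - c\varphi \leq -c\varphi \leq 0$, I substitute $\eta = \varphi \, w$ into $(L-c)\eta$: a direct computation gives
\[
(L-c)\eta \;=\; \varphi \,\Delta w \;+\; 2\nabla\varphi\cdot\nabla w \;+\; w\,(L-c)\varphi,
\]
so that the transformed operator $\widetilde L w := \Delta w + 2\,\tfrac{\nabla\varphi}{\varphi}\cdot\nabla w + \tfrac{(L-c)\varphi}{\varphi}\, w$ acting on $w$ has zeroth-order coefficient $\tfrac{(L-c)\varphi}{\varphi} \leq 0$. Now $\widetilde L$ is a genuine elliptic operator with a nonpositive zeroth-order term, so the classical weak and strong maximum principles apply to $w = \eta/\varphi$; translating back yields the maximum principle for $L-c$ on functions vanishing (or bounded suitably) on $\partial\Omega$.

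The main obstacle is analytic rather than algebraic: one must be careful about the degeneracy of $\Delta$ on $\{s=0\}$ and $\{t=0\}$ (the coefficients $\tfrac{3}{s}$, $\tfrac{3}{t}$ blow up there) and about the behaviour at infinity, since $\Omega$ is unbounded. On $\{t=0\}$, which is the relevant boundary piece inside $\Omega^{*}$, the function $U_s$ does not vanish, so one works instead on $\Omega$ with the reflection symmetry $U(s,t)=-U(t,s)$ to handle the diagonal $\{s=t\}$; along $\{s=t\}$ one has $U=0$ and the needed barrier positivity must be verified by hand. For the behaviour at infinity one restricts, as is standard, to functions that are compactly supported or that decay, so that the maximum is attained in the interior or the comparison can be closed off by an exhaustion argument together with the Modica bound \eqref{modica} controlling $|\nabla U|$ and hence the coefficients of $\widetilde L$. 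Since the statement is quoted verbatim from \cite{C1}, I would present only this conjugation argument and the verification $L U_s \le 0$, $U_s>0$, and refer to \cite{C1} for the routine boundary and exhaustion details.
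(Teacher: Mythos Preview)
The paper does not prove Theorem~\ref{maximum}; it is quoted verbatim from Cabr\'e~\cite{C1} and used as a black box. So there is no in-paper argument to compare against, and your proposal has to stand on its own.

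Your overall strategy --- conjugate by a positive supersolution $\varphi$ so that the transformed operator has nonpositive zeroth-order coefficient --- is the standard device and is correct in principle. The gap is in the barrier itself. You claim $\varphi = U_s$ works because differentiating the equation in $s$ produces ``a favourable lower-order correction coming from the $\tfrac{3}{s}\partial_s$ term.'' This is exactly backwards. The commutator $[\partial_s,\tfrac{3}{s}\partial_s]=-\tfrac{3}{s^2}\partial_s$ yields
\[
L U_s \;=\; \frac{3}{s^{2}}\,U_s \;>\;0 \quad\text{in }\Omega,
\]
which is precisely equation~\eqref{usut}, stated in the paper immediately after the theorem. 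Thus $U_s$ is a strict \emph{sub}solution of $L$, and $(L-c)U_s=(\tfrac{3}{s^2}-c)U_s$ has no sign for general $c\ge 0$. Your two fallback candidates fail for the same reason. For $H'(z)$ one computes
\[
L\bigl(H'(z)\bigr)=3\bigl(H(z)^2-U^2\bigr)H'(z)-\frac{6z}{y^2-z^2}\,H''(z),
\]
and both terms are nonnegative in $\Omega$ (since $0\le U\le H(z)$ there and $H''(z)<0$ for $z>0$). For $e^{\lambda z}$ one gets $Le^{\lambda z}=\bigl(\lambda^2+1-3U^2-\tfrac{6\lambda z}{y^2-z^2}\bigr)e^{\lambda z}$, which equals $\lambda^2+1>0$ on the cone $\{z=0\}$ where $U=0$.

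So none of the three barriers you propose is a supersolution, and the crux of Cabr\'e's proposition --- producing a positive $\varphi$ with $L\varphi\le 0$ on the unbounded domain $\Omega$, where $1-3U^2$ is genuinely positive near the Simons cone --- is exactly what you have not supplied. The conjugation formula is routine; identifying the right $\varphi$ (or an equivalent spectral/first-eigenfunction argument) is the actual content, and for that you will need to consult~\cite{C1} rather than guess.
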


It is known that $u_{s}>0$ and $u_{t}<0$ in $\Omega.$ Moreover, based on this
maximum principle, it is proved in \cite{C1} that $u_{st}\geq0,u_{tt}\leq0$ in
$\Omega.$ But $u_{ss}$ will change sign in $\Omega.$ Indeed, $u_{ss}$ is
positive near the origin and $y$ axis. But we don't know the precise region
where $u_{ss}$ is positive. Here we point out that the estimate of the upper
bound of $\left\vert u_{tt}\right\vert $ near the $s$ axis is the most
difficult one.

Differentiating equation $\left(  \ref{AC1}\right)  $ with respect to $s$ and
$t,$ we obtain%
\begin{equation}
Lu_{s}=\frac{3}{s^{2}}u_{s},\text{ }Lu_{t}=\frac{3}{t^{2}}u_{t}. \label{usut}%
\end{equation}

\begin{lemma}
\label{ts-st}In $\Omega,$ $U_{1}$ satisfies
\[
t\partial_{s}U_{1}+s\partial_{t}U_{1}\leq0.
\]

\end{lemma}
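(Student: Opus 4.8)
The plan is to run a maximum principle argument for the planar linearized operator, combined with a Jacobi-field (ground state) substitution. Write $w:=t\,\partial_s U_1+s\,\partial_t U_1$ and let $L_1\eta:=\partial_s^2\eta+\partial_t^2\eta+(1-3U_1^2)\eta$ be the linearization of \eqref{U1} at $U_1$; since in dimension two there are no first-order drift terms, differentiating \eqref{U1} in $s$ and in $t$ gives $L_1(\partial_s U_1)=0$ and $L_1(\partial_t U_1)=0$. A short computation, using $(\partial_s^2+\partial_t^2)(t\,\partial_s U_1)=t(\partial_s^2+\partial_t^2)\partial_s U_1+2\,\partial_s\partial_t U_1$ and the symmetric identity for $s\,\partial_t U_1$, then gives
\[
L_1 w=4\,\partial_s\partial_t U_1\qquad\text{in }\Omega.
\]
Since $\partial_s\partial_t U_1\ge 0$ in $\Omega$ (a known monotonicity property of the planar saddle solution; see \cite{Dang,C1}), $w$ is a subsolution of $L_1$. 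Moreover $w$ vanishes on $\partial\Omega$: on $\{s=t\}$ the identity $U_1(s,t)=-U_1(t,s)$ forces $\partial_s U_1(s,s)=-\partial_t U_1(s,s)$, and on $\{t=0\}$ the evenness of $U_1$ in $t$ gives $\partial_t U_1(s,0)=0$.

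Next I would divide by $\partial_s U_1$. One has $\partial_s U_1>0$ in $\Omega$, and in fact $\partial_s U_1>0$ on $\overline{\Omega}\setminus\{(0,0)\}$, by the Hopf lemma at $\{s=t\}$ (where $U_1\ge 0$ is superharmonic) and by the strong maximum principle along $\{t=0,\ s>0\}$. Writing $w=(\partial_s U_1)\,q$ and using $L_1(\partial_s U_1)=0$, the quotient $q=w/\partial_s U_1$ solves
\[
\partial_s^2 q+\partial_t^2 q+\frac{2}{\partial_s U_1}\left(\partial_s^2 U_1\,\partial_s q+\partial_s\partial_t U_1\,\partial_t q\right)=\frac{4\,\partial_s\partial_t U_1}{\partial_s U_1}\ \ge\ 0\qquad\text{in }\Omega,
\]
an elliptic equation without zero-order term, so that the classical maximum principle applies on bounded subdomains of $\Omega$. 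Since $q=0$ on $\partial\Omega\setminus\{(0,0)\}$ and $q$ remains bounded near the (removable) corner $(0,0)$, it only remains to prove that $\limsup q\le 0$ as $|(s,t)|\to\infty$ in $\Omega$; exhausting $\Omega$ by bounded domains then yields $q\le 0$, i.e. $w\le 0$. A clean way to organize this last step is to replace $w$ by $w_\delta:=w-\delta(\partial_s U_1-\partial_t U_1)$ with $\delta>0$ small: since $L_1(\partial_s U_1-\partial_t U_1)=0$ we still have $L_1 w_\delta\ge 0$, now $w_\delta<0$ strictly on $\partial\Omega\setminus\{(0,0)\}$, and $w_\delta/\partial_s U_1\le-\delta/2<0$ outside a large ball; the maximum principle gives $w_\delta\le 0$ in $\Omega$, and $\delta\to 0$ concludes.

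The identity $L_1 w=4\,\partial_s\partial_t U_1$ and the substitution are routine, and the genuine obstacle is the behavior at infinity. Since $q=t+s\,\partial_t U_1/\partial_s U_1$, the bound $q\le 0$ is equivalent to $|\partial_t U_1|/\partial_s U_1\ge t/s$, and I would deduce this for $|(s,t)|$ large from the asymptotics of $U_1$ and its first derivatives: near the Simons cone $\{s=t\}$ the solution is modeled, in the normal variable, on the one-dimensional heteroclinic $H$ (recall $\sqrt2\,H'=1-H^2$ and $H(x)=1-2e^{-\sqrt2 x}+O(e^{-2\sqrt2 x})$), so that $\partial_t U_1/\partial_s U_1\to-1$ there with an error governed by the (decaying) curvature of the cone, while $t/s\to1$ from below; away from the cone, where $U_1\to\pm1$ exponentially, $\partial_s U_1$ and $\partial_t U_1$ decay at the same exponential rate with opposite signs, so again $\partial_t U_1/\partial_s U_1\to-1$, which dominates $t/s<1$. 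Making these expansions uniform, in particular up to the cone, is the main technical point, and it is exactly here that sharp control of the planar saddle solution is used.
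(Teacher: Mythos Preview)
Your computation $L_1 w = 4\,\partial_s\partial_t U_1 \ge 0$ and the boundary values $w=0$ on $\partial\Omega$ match the paper exactly. The paper then finishes in one line by invoking Cabr\'e's maximum principle (Theorem~\ref{maximum}, i.e.\ Proposition~2.2 of \cite{C1}), which is formulated precisely for the linearized Allen--Cahn operator on the unbounded sector $\Omega$ and absorbs the behavior at infinity as a black box. So the ``genuine obstacle'' you flag is exactly what that cited result handles; no asymptotic analysis of $U_1$ is needed.

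Your ground-state substitution $q=w/\partial_s U_1$ and the $w_\delta$ perturbation are, in spirit, how such maximum principles are proved, so the strategy is sound. But the asymptotic justification you sketch contains concrete errors. The assertion that $\partial_t U_1/\partial_s U_1\to -1$ away from the cone is false: on $\{t=0\}$ one has $\partial_t U_1\equiv 0$ by even symmetry, so the ratio is $0$ there and, by continuity, is close to $0$ (not $-1$) for small $t>0$ and large $s$. Also, in dimension two the ``cone'' $\{s=t\}$ is a straight line with zero curvature, so attributing the error near it to a ``decaying curvature'' is not meaningful; the relevant correction is the $O(y^{-1})$ deviation of $U_1$ from $H(z)$. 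Your conclusion $\limsup q\le 0$ may still be recoverable (e.g.\ near $\{t=0\}$ one has $q=t\approx 0$ and the $\delta$-barrier still wins), but the reasons you state do not establish it uniformly across $\Omega$. The clean fix is simply to invoke \cite{C1}; its argument rests on the positive kernel element $\partial_z U_1=(\partial_s U_1-\partial_t U_1)/\sqrt2$, which is precisely the barrier appearing in your $w_\delta$.
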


\begin{proof}
Consider the linearized operator around $U_{1}:$%
\[
P\phi:=\partial_{s}^{2}\phi+\partial_{t}^{2}\phi+\left(  1-3U_{1}^{2}\right)
\phi.
\]
We compute $P\left(  \partial_{s}U_{1}\right)  =0,P\left(  \partial_{t}%
U_{1}\right)  =0.$ Moreover,
\[
P\left(  t\partial_{s}U_{1}\right)  =2\partial_{s}\partial_{t}U_{1},\text{ and
}P\left(  s\partial_{t}U_{1}\right)  =2\partial_{s}\partial_{t}U_{1}.
\]
Hence, using the fact that $\partial_{s}\partial_{t}U_{1}\geq0$ in $\Omega,$
we get
\[
P\left(  t\partial_{s}U_{1}+s\partial_{t}U_{1}\right)  \geq0.
\]
By the maximum principle(Theorem \ref{maximum}), and the boundary condition:%
\[
t\partial_{s}U_{1}+s\partial_{t}U_{1}=0,\text{ on }\partial\Omega,
\]
we get
\[
t\partial_{s}U_{1}+s\partial_{t}U_{1}\leq0,\text{ in }\Omega.
\]
This completes the proof.
\end{proof}

The next result is more or less standard.

\begin{lemma}
The functions $H\left(  y\right)  H\left(  z\right)  $ and $U_{1}\left(
s,t\right)  $ are super solutions of $U_{4}$ in $\mathbb{R}^{8}.$
Consequently, $U_{4}\left(  s,t\right)  \leq U_{1}\left(  s,t\right)  \leq
H\left(  y\right)  H\left(  z\right)  $ in $\Omega.$
\end{lemma}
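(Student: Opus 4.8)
The plan is to reduce the statement to a pointwise differential inequality for each of the two functions and then compare. For a function $v$, being a supersolution of the Allen--Cahn equation means $\Delta v+v-v^{3}\le 0$, where $\Delta=\partial_{s}^{2}+\partial_{t}^{2}+\tfrac{3}{s}\partial_{s}+\tfrac{3}{t}\partial_{t}$. I would first dispatch $v=U_{1}$: since $U_{1}$ solves the two--dimensional equation (\ref{U1}),
\[
\Delta U_{1}+U_{1}-U_{1}^{3}=\frac{3}{s}\partial_{s}U_{1}+\frac{3}{t}\partial_{t}U_{1}=\frac{3}{st}\bigl(t\partial_{s}U_{1}+s\partial_{t}U_{1}\bigr)\le 0\qquad\text{in }\Omega ,
\]
the last inequality being exactly Lemma \ref{ts-st} (recall $s,t>0$ in $\Omega$).

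For $v=H(y)H(z)$ I would work in the $(y,z)$ variables, where the equation has the form (\ref{uyz}). Using $H^{\prime\prime}=-(H-H^{3})$ and the first--order identity $\sqrt 2\,H^{\prime}=1-H^{2}$, a direct computation gives
\[
\Delta\bigl(H(y)H(z)\bigr)+H(y)H(z)-\bigl(H(y)H(z)\bigr)^{3}=-H(y)H(z)(1-H(y)^{2})(1-H(z)^{2})+\frac{6}{y^{2}-z^{2}}\bigl(yH^{\prime}(y)H(z)-zH(y)H^{\prime}(z)\bigr).
\]
In $\Omega$ one has $y>z>0$, hence $y^{2}-z^{2}>0$ and $0<H(z)<H(y)<1$; dividing the desired inequality $\le 0$ by $H(y)H(z)>0$ and substituting $H^{\prime}=(1-H^{2})/\sqrt 2$, it becomes
\[
\frac{6}{\sqrt 2\,(y^{2}-z^{2})}\bigl(g(y)-g(z)\bigr)\le(1-H(y)^{2})(1-H(z)^{2}),\qquad g(w):=\frac{w(1-H(w)^{2})}{H(w)} .
\]
Since $H(w)=\tanh(w/\sqrt 2)$ one checks $g(w)=2w/\sinh(\sqrt 2\,w)$, which is strictly decreasing on $(0,\infty)$ (equivalently $\xi\mapsto\xi/\sinh\xi$ is decreasing, because $\coth\xi>1/\xi$ for $\xi>0$); therefore $g(y)-g(z)\le 0$ while the right--hand side is positive, and the inequality holds. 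Thus $H(y)H(z)$ is a supersolution in $\Omega$, and the same computation with the curvature terms of (\ref{uyz}) deleted shows that $H(y)H(z)$ is also a supersolution of the two--dimensional Allen--Cahn equation, since there the leftover term $-H(y)H(z)(1-H(y)^{2})(1-H(z)^{2})$ is already $\le 0$.

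Finally I would compare. All of $U_{4}$, $U_{1}$ and $H(y)H(z)$ vanish on $\{s=t\}=\partial\Omega\cap\{t>0\}$ and are nonnegative in $\Omega$, on $\{t=0\}$ they satisfy the Neumann condition $\partial_{t}(\cdot)=0$ coming from evenness in the second group of variables, and at infinity $U_{4}$, $U_{1}$ and $H(y)H(z)$ all converge to the one--dimensional heteroclinic transverse to the cone, so $U_{4}-U_{1}$ and $U_{1}-H(y)H(z)$ have nonpositive limits there. Hence $U_{4}\le U_{1}\le H(y)H(z)$ in $\Omega$ follows by comparison: in $\mathbb R^{8}$ one compares $U_{4}$ with the supersolutions $U_{1}$ and $H(y)H(z)$, and in $\mathbb R^{2}$ one compares $U_{1}$ with the supersolution $H(y)H(z)$. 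The one point requiring care -- and the only place the argument is not entirely routine -- is that the zeroth--order coefficient produced by linearizing the difference (of the form $U_{4}^{2}+U_{4}v+v^{2}-1$, or $3\xi^{2}-1$) changes sign where the functions are small, so the plain maximum principle does not apply directly; this is handled either by Cabré's maximum principle (Theorem \ref{maximum}) applied to the linearization about $U_{4}$, or, following \cite{C,C2}, by the fact that the saddle solutions are obtained by monotone iteration in $\Omega$ between the subsolution $0$ and precisely these supersolutions, uniqueness of the symmetric solution then forcing $0\le U_{4}\le\min(U_{1},H(y)H(z))$ and $0\le U_{1}\le H(y)H(z)$. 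The supersolution computation for $H(y)H(z)$, in particular the reduction to monotonicity of $\xi/\sinh\xi$, is the only genuinely computational ingredient, and it is short.
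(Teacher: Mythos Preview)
Your proof is correct and follows essentially the same route as the paper: for $U_{1}$ you invoke Lemma \ref{ts-st} exactly as the paper does, and for $H(y)H(z)$ your monotonicity of $g(w)=2w/\sinh(\sqrt 2\,w)$ is the same statement as the paper's ``$\frac{yH'(y)}{H(y)}$ is monotone decreasing'' (the two functions differ by a factor $\sqrt 2$), only you make the $\sinh$ form explicit. Your discussion of the comparison step is more detailed than the paper's, which simply asserts the ordering once the supersolution inequalities are in hand and cites \cite{Sch} for $U_{1}\le H(y)H(z)$; your observation that removing the curvature terms also gives a two--dimensional supersolution supplies that last inequality directly.
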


\begin{proof}
We first compute
\begin{align*}
&  -\Delta\left(  H\left(  y\right)  H\left(  z\right)  \right)  +\left(
H\left(  y\right)  H\left(  z\right)  \right)  ^{3}-H\left(  y\right)
H\left(  z\right) \\
&  =-H^{\prime\prime}\left(  y\right)  H\left(  z\right)  -H\left(  y\right)
H^{\prime\prime}\left(  z\right)  +\left(  H\left(  y\right)  H\left(
z\right)  \right)  ^{3}-H\left(  y\right)  H\left(  z\right) \\
&  -\frac{6yH^{\prime}\left(  y\right)  H\left(  z\right)  -6zH\left(
y\right)  H^{\prime}\left(  z\right)  }{y^{2}-z^{2}}\\
&  =H\left(  y\right)  H\left(  z\right)  \left(  1-H\left(  y\right)
^{2}\right)  \left(  1-H\left(  z\right)  ^{2}\right) \\
&  -\frac{6yH^{\prime}\left(  y\right)  H\left(  z\right)  -6zH\left(
y\right)  H^{\prime}\left(  z\right)  }{y^{2}-z^{2}}.
\end{align*}
Note that when $y>0,$ the function $\frac{yH^{\prime}\left(  y\right)
}{H\left(  y\right)  }$ is monotone decreasing. It follows that
\[
yH^{\prime}\left(  y\right)  H\left(  z\right)  -zH\left(  y\right)
H^{\prime}\left(  z\right)  \leq0,\text{ if }y\geq z.
\]
This in turn implies that
\[
-\Delta\left(  H\left(  y\right)  H\left(  z\right)  \right)  +\left(
H\left(  y\right)  H\left(  z\right)  \right)  ^{3}-H\left(  y\right)
H\left(  z\right)  \geq0.
\]
Therefore $H\left(  y\right)  H\left(  z\right)  $ is a supersolution.

Next, by Lemma \ref{ts-st}, we have
\begin{align*}
-\Delta &  \left(  U_{1}\left(  s,t\right)  \right)  -U_{1}\left(  s,t\right)
+U_{1}^{3}\left(  s,t\right) \\
&  =-\frac{3}{s}\partial_{s}U_{1}-\frac{3}{t}\partial_{t}U_{1}\geq0.
\end{align*}
Hence $U_{1}$ is also a supersolution of $U_{4},$ in $\Omega.$ Indeed, the
fact that $U_{1}\left(  y,z\right)  \leq H\left(  y\right)  H\left(  z\right)
$ has already been proved in \cite{Sch}.
\end{proof}

Note that although $U_{1}$ is a supersolution, we still don't have explicit
formula for $U_{1}.$ On the other hand, using $H\left(  y\right)  H\left(
z\right)  ,$ the upper bound near the origin can be improved by iterating the
solution once. Indeed, after some tedious computation, we can show that $u$ is
bounded from above near the origin by $0.434yz.$ Note that for $y,z$ small,
the supersolution $H\left(  y\right)  H\left(  z\right)  \sim0.5yz.$

We remark that in $\Omega,$ the saddle solution is not concave. However, we
conjecture that its level lines should be convex. But we don't know how to
prove it at this moment.

Next, we want to find (explicit) subsolutions of $u.$ In $\mathbb{R}^{2}$, it
is known\cite{Sch} that the function $H\left(  \frac{y}{\sqrt{2}}\right)
H\left(  \frac{z}{\sqrt{2}}\right)  $ is a subsolution of $U_{1}.$ In higher
dimensions, the construction of (explicit) subsolutions are more delicate. We
have the following

\begin{lemma}
For $a\in\left(  0,0.45\right)  ,$ the function $H\left(  ay\right)  H\left(
az\right)  $ is a subsolution of $u.$
\end{lemma}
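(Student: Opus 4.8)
The goal is to show that $w:=H(ay)H(az)$ satisfies $-\Delta w - w + w^3 \le 0$ in $\Omega$ (equivalently in $\Omega^\ast$ by symmetry), for every $a\in(0,0.45)$, since then comparison with $u=U_4$ (using that $u>0$ in $\Omega$, $u=0$ on $\partial\Omega$, and $w\le 1$ with the right boundary behavior) forces $w\le u$. The plan is to compute $-\Delta w$ in the $(y,z)$ coordinates using the form of the operator in \eqref{uyz}. Since $H$ solves $-H''(\xi)=H(\xi)-H(\xi)^3$, scaling gives $-H''(a\xi)=\frac1{a^2}\big(aH'(a\xi)\cdot a\big)$-type identities; concretely $-\partial_\xi^2 H(a\xi) = a^2\big(H(a\xi)-H(a\xi)^3\big)$. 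Writing $P:=H(ay)$, $Q:=H(az)$ for brevity, one finds
\[
-\Delta w - w + w^3 = a^2 PQ(1-P^2)(1-Q^2) - PQ + P^3Q^3 - \frac{6}{y^2-z^2}\big(ay H'(ay)Q - az H'(az)P\big).
\]

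The strategy then splits the right-hand side into a "one-dimensional" part and a "curvature" part. The one-dimensional part is $PQ\big[a^2(1-P^2)(1-Q^2) - 1 + P^2Q^2\big]$; one checks this is $\le 0$ whenever $a^2\le$ something like $1$, more precisely it should be negative for all $a\le 1$ using $0\le P,Q\le 1$ — indeed $a^2(1-P^2)(1-Q^2)\le (1-P^2)(1-Q^2)=1-P^2-Q^2+P^2Q^2\le 1-P^2Q^2$ is false in general, so one must be more careful and actually exploit that for $a<0.45$ the factor $a^2<0.2025$ makes the gain decisive. The curvature term: in $\Omega$ we have $y>|z|$ so $y^2-z^2>0$, and since $H$ is odd-like increasing with $H(az)$ having the sign of $z$, and $H'>0$, the monotonicity fact already used in the previous lemma — that $\xi\mapsto \xi H'(\xi)/H(\xi)$ is decreasing for $\xi>0$ — gives $ay H'(ay) Q - az H'(az) P \le 0$ when $y\ge |z|$ and $z>0$ (and a parallel sign analysis when $z<0$), so $-\frac{6}{y^2-z^2}(\cdots)\ge 0$, which goes the \emph{wrong} way for a subsolution. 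Hence the real content is that the strictly negative one-dimensional part, with the extra smallness from $a^2<0.2025$, dominates the nonnegative curvature term pointwise.

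Thus the key steps, in order, are: (1) derive the displayed identity for $-\Delta w - w + w^3$ via scaling of the ODE; (2) bound the curvature term from above — using $\sqrt2 H' = 1-H^2$ so that $ay H'(ay) = \frac{ay}{\sqrt2}(1-P^2)$ and $y^2-z^2 = 2yz$ — wait, in the $(y,z)$ variables $s=(y+z)/\sqrt2$, $t=(y-z)/\sqrt2$, so $st = (y^2-z^2)/2$; rewrite $\frac{6}{y^2-z^2}\big(ayH'(ay)Q - azH'(az)P\big) = \frac{6}{y^2-z^2}\cdot\frac{a}{\sqrt2}\big(y(1-P^2)Q - z(1-Q^2)P\big)$ and estimate $|y(1-P^2)Q - z(1-Q^2)P| \le C y(1-P^2)$-type bounds using $|z|\le y$ and $Q,P\le 1$; (3) show the one-dimensional part absorbs it, i.e. prove
\[
a^2 PQ(1-P^2)(1-Q^2) - PQ(1 - P^2Q^2) + \frac{6a}{\sqrt2(y^2-z^2)}\big|y(1-P^2)Q - z(1-Q^2)P\big| \le 0
\]
for $0<a<0.45$, $y>|z|>0$. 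I would handle (3) by distinguishing the regime where $y$ is bounded (so $1-P^2$ is bounded below away from $0$ and the $1/(y^2-z^2)$ blow-up is controlled by the small coefficient $a$, using that near the diagonal $y\approx z$ the numerator also vanishes to compensate) from the regime $y$ large (where $1-P^2\to0$ exponentially, killing the curvature term, and $PQ(1-P^2Q^2)\ge$ a fixed positive quantity times $(1-P^2)+(1-Q^2)$ by the expansion $H(x)=1-2e^{-\sqrt2 x}+O(e^{-2\sqrt2x})$).

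The main obstacle will be step (3) near the diagonal $\{y=z\}\subset\partial\Omega$: there $1-P^2Q^2 = (1-P^2) + P^2(1-Q^2)$ is small, $PQ$ is not small, but the curvature coefficient $\frac{6a}{\sqrt2(y^2-z^2)}$ blows up like $1/(y-z)$ — one must show the numerator $y(1-P^2)Q - z(1-Q^2)P$ vanishes fast enough (it is $O(y-z)$ near the diagonal by a first-order Taylor expansion in $z$ around $y$, with an explicit derivative bound) so that the curvature term stays bounded, and then verify that the bound it gives is strictly dominated by $PQ(1-P^2Q^2)$ for the stated range $a<0.45$. This is exactly the place where the numerical constant $0.45$ enters; a crude version of the argument would give a smaller admissible range, and sharpening it to $0.45$ is where the "delicate" estimation promised in the text is spent. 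I expect the author to carry this out by an explicit computation exploiting $\sqrt2 H'=1-H^2$ and the sign/monotonicity lemma already proved, reducing everything to a one-variable inequality in $H(ay), H(az)$ that is checked by elementary calculus.
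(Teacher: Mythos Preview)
Your displayed identity for $-\Delta w - w + w^3$ contains an algebraic slip. With $P=H(ay)$, $Q=H(az)$, the second–derivative part is
\[
-\partial_y^2 w - \partial_z^2 w \;=\; a^2\big(P-P^3\big)Q + a^2 P\big(Q-Q^3\big) \;=\; a^2 PQ\big(2-P^2-Q^2\big),
\]
not $a^2 PQ(1-P^2)(1-Q^2)$. Hence the correct ``one–dimensional'' part is
\[
PQ\big(2a^2 - 1 - a^2 P^2 - a^2 Q^2 + P^2 Q^2\big),
\]
which exceeds your expression by $a^2 PQ(1-P^2Q^2)>0$. Since this extra positive piece is exactly of the same order as the term you rely on to absorb the curvature contribution (it equals the negative part $-PQ(1-P^2Q^2)$ times $-a^2$), your absorption strategy in step~(3) is built on an overly optimistic formula: proving your displayed expression is $\le 0$ does \emph{not} imply the actual one is. The sign analysis of the curvature term is fine, but the core inequality you set out to check is not the right one.

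By contrast, the paper does not attempt any region–by–region analytic estimate. It simply rewrites everything in the rescaled variables $\tilde y=ay$, $\tilde z=az$ and uses $\sqrt{2}H'=1-H^2$ to get a single explicit function of $(\tilde y,\tilde z)$ (with $a$ appearing only as an overall parameter), namely
\[
H(\tilde y)H(\tilde z)\big(2a^2-1-a^2H^2(\tilde y)-a^2H^2(\tilde z)+H^2(\tilde y)H^2(\tilde z)\big)
-\frac{3\sqrt{2}\,a^2}{\tilde y^2-\tilde z^2}\Big(\tilde y H(\tilde z)-\tilde z H(\tilde y)+H(\tilde y)H(\tilde z)\big(\tilde z H(\tilde z)-\tilde y H(\tilde y)\big)\Big),
\]
and then asserts that one can verify directly that this is negative for $a\in(0,0.45)$. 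So the paper's ``proof'' is essentially a reduction to a direct (presumably numerical) check of an explicit two–variable inequality, not the kind of analytic domination argument you outline. If you want to pursue your route, first correct the formula above; the strategy of Taylor–expanding the curvature numerator near the diagonal and using exponential decay of $1-P^2$ for large $y$ is reasonable, but you will have less room than you thought, and the threshold $0.45$ is precisely where the paper's direct check is doing the work.
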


\begin{proof}
Let us denote $H\left(  ay\right)  H\left(  az\right)  $ by $\eta$ and write
$\tilde{y}=ay,\tilde{z}=az.$ Then $-\Delta\eta-\eta+\eta^{3}$ is equal to
\begin{align*}
&  H\left(  \tilde{y}\right)  H\left(  \tilde{z}\right)  \left(
2a^{2}-1-a^{2}H^{2}\left(  \tilde{y}\right)  -a^{2}H^{2}\left(  \tilde
{z}\right)  +H^{2}\left(  \tilde{y}\right)  H^{2}\left(  \tilde{z}\right)
\right) \\
&  -\frac{3a^{2}\sqrt{2}}{\tilde{y}^{2}-\tilde{z}^{2}}\left(  \tilde
{y}H\left(  \tilde{z}\right)  -\tilde{z}\tilde{H}\left(  \tilde{y}\right)
+H\left(  \tilde{y}\right)  H\left(  \tilde{z}\right)  \left(  \tilde
{z}H\left(  \tilde{z}\right)  -\tilde{y}H\left(  \tilde{y}\right)  \right)
\right)  .
\end{align*}
This is an explicit function of the variables $\tilde{y}$ and $\tilde{z}.$ One
can verify directly that it is negative when $a\in\left(  0,0.45\right)  .$
\end{proof}

We remark that this subsolution is not optimal, especially regarding the
decaying rate away from the Simons cone. In the sequel, we shall write the
supersolution $H\left(  y\right)  H\left(  z\right)  $ as $u^{\ast}$. We also
set $\phi=u^{\ast}-u\geq0.$ To estimate $\phi,$ we introduce the function
\[
\rho\left(  z\right)  =H^{\prime}\left(  z\right)  \int_{0}^{z}\left(
H^{\prime-2}\int_{s}^{+\infty}H^{\prime2}\right)  ds.
\]
Note that the function $\rho$ can be explicitly written down and it satisfies
\[
\rho^{\prime\prime}-\left(  3H^{2}-1\right)  \rho=-H^{\prime}.
\]

\begin{lemma}
In $\Omega,$ we have:%
\[
\phi\leq\frac{4H\left(  y\right)  \left(  H\left(  z\right)  +zH^{\prime
}\left(  z\right)  \right)  }{y^{2}-z^{2}}.
\]
Moreover,
\begin{equation}
\phi\leq\frac{5}{4}\left(  \frac{1}{t}-\frac{1}{s}\right)  H\left(  y\right)
\rho\left(  z\right)  ,\text{ for }z>1.\text{ } \label{ro}%
\end{equation}

\end{lemma}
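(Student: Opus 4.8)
The plan is to estimate $\phi = u^* - u$ by a comparison/maximum principle argument using the operator $L$ (the linearization around $u$, not around $u^*$), so that $\phi$ becomes a subsolution of $L$ up to controllable error. First I would compute $L\phi$. Since $-\Delta u^* + (u^*)^3 - u^* \ge 0$ (the supersolution property proved above) and $-\Delta u + u^3 - u = 0$, subtracting gives $-\Delta\phi + (u^*)^3 - u^3 - \phi \ge 0$; using $(u^*)^3 - u^3 = ((u^*)^2 + u^* u + u^2)\phi$ and $\phi\ge 0$, one gets $L\phi = \Delta\phi - (3u^2-1)\phi \le \big((u^*)^2 + u^* u + u^2 - 3u^2\big)\phi$, and since $0\le u \le u^* \le 1$ this coefficient is $\le (u^*)^2 - u^2 \le 0$, hence in fact more cleanly $L\phi \le 0$ in $\Omega$ after also recording the curvature term $-\frac{3}{s}\partial_s u^* - \frac{3}{t}\partial_t u^* \ge 0$ contribution. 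The point is that $\phi$ is an $L$-subsolution, so by Theorem \ref{maximum} it lies below any $L$-supersolution with the same (or larger) boundary data on $\partial\Omega$.

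Next I would produce the two barrier supersolutions on the right-hand sides of the claimed inequalities. For the first bound, set $\Psi_1 := \dfrac{4H(y)\,(H(z)+zH'(z))}{y^2-z^2}$. On $\partial\Omega = \{z=0\}$ we have $\phi = u^* - u = H(y)H(0) - 0 = 0$ while $H(0) + 0\cdot H'(0) = 0$, so $\Psi_1 = 0 = \phi$ on $\partial\Omega$; one also checks $\Psi_1 \ge 0$ in $\Omega$ (using that $H(z) + zH'(z) \ge 0$ for $z\ge 0$, which follows from $(zH)' = H + zH'$ and monotonicity/convexity of $H$). The real work is to verify $L\Psi_1 \le 0$ in $\Omega$: one writes $\Delta$ in $(y,z)$-coordinates as $\partial_y^2 + \partial_z^2 + \frac{6}{y^2-z^2}(y\partial_y - z\partial_z)$, plugs in the product-times-$(y^2-z^2)^{-1}$ structure, and uses the ODE $-H'' = H - H^3$ together with $\sqrt2 H' = 1-H^2$ to collapse the one-dimensional pieces. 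The singular factor $(y^2-z^2)^{-1}$ is chosen precisely so that the leading curvature term $\frac{6}{y^2-z^2}(y\partial_y - z\partial_z)$ acting on $\Psi_1$ cancels against $-\Delta\Psi_1$'s worst term, leaving a remainder that is nonpositive once $3u^2 - 1 \ge -1$ is used crudely and the exponential decay of $H'$ is invoked — here the factor $H(z)+zH'(z)$ rather than $H(z)$ alone is what makes the $z$-derivative terms have the right sign. Then Theorem \ref{maximum} applied to $\phi - \Psi_1$ gives the first estimate.

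For the refined bound \eqref{ro}, valid for $z>1$, I would use the function $\rho(z)$ introduced just above, which by construction solves $\rho'' - (3H^2-1)\rho = -H'$, i.e. it is the natural one-dimensional resolvent of the linearized operator against the forcing $-H'$; note $\rho \ge 0$ and $\rho(z)$ grows at most polynomially (in fact linearly) in $z$ since $H'$ decays exponentially, while $\rho$ captures the $z^{-1}$-type structure coming from $\tfrac{1}{t}-\tfrac{1}{s} = \frac{\sqrt2 z}{s t} = \frac{2\sqrt2\,z}{y^2-z^2}$. The candidate supersolution is $\Psi_2 := \tfrac54\big(\tfrac1t - \tfrac1s\big) H(y)\rho(z)$; one checks it dominates $\Psi_1$ for $z>1$ (so the boundary/comparison data at $z=1$ is already handled by the first estimate, avoiding any issue at $\partial\Omega$), and then computes $L\Psi_2$. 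Using the ODE for $\rho$ to replace $\rho'' - (3H^2-1)\rho$ by $-H'$, the ODE for $H$, and the coordinate form of $\Delta$, the bulk terms organize into $-\tfrac54(\cdots)H(y)H'(z)\cdot(\text{positive})$ plus lower-order curvature corrections that are $O((y^2-z^2)^{-2})$ and hence absorbed for $z>1$ (equivalently $y > z > 1$, so $y^2 - z^2$ is bounded below suitably along the relevant region, or one argues via the cruder bound $3u^2 - 1 \ge -1$). The constant $\tfrac54$ is the slack needed to dominate these corrections.

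The main obstacle will be the sign verification $L\Psi_i \le 0$: these are genuine computations in the $(y,z)$ coordinates where the cross term $\frac{6}{y^2-z^2}(y\partial_y - z\partial_z)$ interacts badly with the product structure, and one must track the exponentially small but sign-indefinite contributions from $H''$, $H'$ and from $3u^2-1$ not being exactly $3H(y)^2H(z)^2-1$. I expect the cleanest route is: (i) first establish the crude bound $\Psi_1$ using only $3u^2 - 1 \ge -1$ and the supersolution inequality for $u^*$, which keeps everything one-dimensional plus the explicit rational factor; then (ii) bootstrap to \eqref{ro} by comparing $\phi$ (now known $\le \Psi_1$) against $\Psi_2$ on $\{z>1\}$, where the pointwise smallness of $\phi$ from step (i) lets one treat the nonlinear discrepancy $(u^*)^2 + u^*u + u^2 - 3u^2$ as a small nonnegative-coefficient perturbation and apply Theorem \ref{maximum} one more time.
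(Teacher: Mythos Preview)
Your opening computation contains a sign error that undermines the entire strategy. From $-\Delta\phi + (u^*)^3 - u^3 - \phi \ge 0$ you correctly get
\[
L\phi \le \big((u^*)^2 + u^*u - 2u^2\big)\phi,
\]
but this coefficient equals $(u^*-u)(u^*+2u) = \phi(u^*+2u) \ge 0$, not $\le 0$ as you claim (note $u^*\ge u\ge 0$, so also $(u^*)^2 - u^2 \ge 0$, contrary to what you wrote). Hence the bound you obtain is $L\phi \le \phi^2(u^*+2u)$, which is nonnegative and gives no sign for $L\phi$. In fact the exact identity is
\[
-L\phi = \big(-\Delta u^* + (u^*)^3 - u^*\big) - 3u\phi^2 - \phi^3,
\]
a sum of a nonnegative term and nonpositive terms, so $L\phi$ has no definite sign. (There is also a terminology slip: $L\phi\le 0$ would make $\phi$ a \emph{super}solution, which lies above things, not below.)

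Because of this, the paper does not attempt to treat $\phi$ and the barrier $\Psi$ separately. Instead it works with the difference $\eta = a\,\frac{H(y)g(z)}{y^2-z^2} - \phi$, keeps the explicit error $-\Delta u^* + (u^*)^3 - u^*$ from \eqref{u*} and the nonlinear terms $-3u\phi^2-\phi^3$, and shows $L\eta \le 0$ \emph{only in the region where} $\eta \le 0$; the constant $a=8$ is chosen precisely to balance these competing terms. The maximum principle then forces that region to be empty. For \eqref{ro} the paper similarly computes $L\tilde\eta + 3(u^2 - u^{*2})\tilde\eta$, i.e.\ it effectively linearizes around $u^*$ so that the ODE for $\rho$ (with potential $3H^2-1$) can be used, and invokes the already-proved first bound to handle the boundary $\{z=1\}$. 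Your plan to verify $L\Psi_i \le 0$ in isolation would also run into trouble: replacing $3u^2-1$ by the crude lower bound $-1$ makes $L\Psi$ \emph{larger}, not smaller, so that shortcut points the wrong way.
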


\begin{proof}
The function $\phi$ satisfies%
\[
-\Delta\phi+\left(  3u^{2}-1\right)  \phi=-3u\phi^{2}-\phi^{3}-\Delta u^{\ast
}+u^{\ast3}-u^{\ast}.
\]
Recall that $u^{\ast}$ is a supersolution and we have
\begin{align}
-\Delta u^{\ast}+u^{\ast3}-u^{\ast}  &  =H\left(  y\right)  H\left(  z\right)
\left(  1-H\left(  y\right)  ^{2}\right)  \left(  1-H\left(  z\right)
^{2}\right) \nonumber\\
-\frac{6yH^{\prime}\left(  y\right)  H\left(  z\right)  }{y^{2}-z^{2}}  &
+\frac{6zH\left(  y\right)  H^{\prime}\left(  z\right)  }{y^{2}-z^{2}}.
\label{u*}%
\end{align}
Let $g\left(  z\right)  =\frac{1}{2}\left(  H\left(  z\right)  +zH^{\prime
}\left(  z\right)  \right)  .$ We compute
\begin{align*}
\Delta\left(  \frac{g\left(  z\right)  }{y^{2}-z^{2}}\right)   &  =\left(
\frac{6y^{2}+2z^{2}}{\left(  y^{2}-z^{2}\right)  ^{3}}-\frac{6y}{y^{2}-z^{2}%
}\frac{2y}{\left(  y^{2}-z^{2}\right)  ^{2}}\right)  g\\
&  +\left(  \frac{6z^{2}+2y^{2}}{\left(  y^{2}-z^{2}\right)  ^{3}}-\frac
{6z}{y^{2}-z^{2}}\frac{2z}{y^{2}-z^{2}}\right)  g\\
&  +2\frac{2z}{y^{2}-z^{2}}\frac{1}{y^{2}-z^{2}}g^{\prime}-\frac{6z}%
{y^{2}-z^{2}}\frac{1}{y^{2}-z^{2}}g^{\prime}+\frac{g^{\prime\prime}}%
{y^{2}-z^{2}}.
\end{align*}
The left hand side is equal to
\[
\frac{-4y^{2}-4z^{2}}{\left(  y^{2}-z^{2}\right)  ^{3}}g-\frac{2z}{\left(
y^{2}-z^{2}\right)  ^{2}}g^{\prime}+\frac{g^{\prime\prime}}{y^{2}-z^{2}}.
\]
It follows that
\begin{align*}
\Delta\left(  \frac{H\left(  y\right)  }{y^{2}-z^{2}}g\left(  z\right)
\right)   &  =\frac{-4y^{2}-4z^{2}}{\left(  y^{2}-z^{2}\right)  ^{3}}H\left(
y\right)  g-\frac{2z}{\left(  y^{2}-z^{2}\right)  ^{2}}H\left(  y\right)
g^{\prime}-\frac{H\left(  y\right)  H\left(  z\right)  }{y^{2}-z^{2}}\\
&  +\left(  H^{\prime\prime}\left(  y\right)  +\frac{6yH^{\prime}\left(
y\right)  }{y^{2}-z^{2}}\right)  \frac{g}{y^{2}-z^{2}}-\frac{4y}{\left(
y^{2}-z^{2}\right)  ^{2}}H^{\prime}\left(  y\right)  g\\
&  =\left(  \frac{-4y^{2}-4z^{2}}{\left(  y^{2}-z^{2}\right)  ^{3}}H\left(
y\right)  -\frac{\sqrt{2}H\left(  y\right)  H^{\prime}\left(  y\right)
}{y^{2}-z^{2}}+\frac{2yH^{\prime}\left(  y\right)  }{\left(  y^{2}%
-z^{2}\right)  ^{2}}\right)  g\\
&  -\frac{2z}{y^{2}-z^{2}}H\left(  y\right)  g^{\prime}+\frac{H\left(
y\right)  g^{\prime\prime}}{y^{2}-z^{2}}.
\end{align*}
Let $a$ be a constant to be determined later on, we have
\begin{align*}
L\left(  \frac{aH\left(  y\right)  g}{y^{2}-z^{2}}-\phi\right)   &  \leq
-\frac{aH\left(  y\right)  H\left(  z\right)  }{y^{2}-z^{2}}-\Delta u^{\ast
}-u^{\ast}+u^{\ast3}\\
&  +\frac{3a\left(  u^{\ast2}-u^{2}\right)  }{y^{2}-z^{2}}H\left(  y\right)
g\\
&  +\left(  \frac{-4y^{2}-4z^{2}}{\left(  y^{2}-z^{2}\right)  ^{3}}H\left(
y\right)  -\frac{\sqrt{2}H\left(  y\right)  H^{\prime}\left(  y\right)
}{y^{2}-z^{2}}+\frac{2yH^{\prime}\left(  y\right)  }{\left(  y^{2}%
-z^{2}\right)  ^{2}}\right)  ag\\
&  -\frac{2azH\left(  y\right)  }{\left(  y^{2}-z^{2}\right)  ^{2}}g^{\prime
}-3u\phi^{2}-\phi^{3}.
\end{align*}
Let us denote $\frac{aH\left(  y\right)  g}{y^{2}-z^{2}}-\phi$ by $\eta.$ Note
that for $z$ close to $0,$
\begin{align*}
&  H\left(  y\right)  H\left(  z\right)  \left(  1-H\left(  y\right)
^{2}\right) \\
\left(  1-H\left(  z\right)  ^{2}\right)   &  \leq\frac{6yH^{\prime}\left(
y\right)  H\left(  z\right)  }{y^{2}-z^{2}}.
\end{align*}
If we choose $a=8,$ then $L\eta\leq0,$ in the region where $\eta\leq0.$ Hence
by maximum principle,
\begin{equation}
\phi\leq\frac{4H\left(  y\right)  \left(  H\left(  z\right)  +zH^{\prime
}\left(  z\right)  \right)  }{y^{2}-z^{2}}\text{ in }\Omega. \label{g}%
\end{equation}

It remains to prove $\left(  \ref{ro}\right)  .$ We first observe that due to
$\left(  \ref{g}\right)  ,$ the inequality $\left(  \ref{ro}\right)  $ is true
for $z=1.$ Let $\tilde{\eta}=\left(  \frac{1}{t}-\frac{1}{s}\right)  H\left(
y\right)  \rho\left(  z\right)  .$ We compute
\begin{align*}
&  L\tilde{\eta}+3\left(  u^{2}-u^{\ast2}\right)  \tilde{\eta}\\
&  =\left(  \frac{1}{s^{3}}-\frac{1}{t^{3}}\right)  H\left(  y\right)
\rho+\left(  \frac{\sqrt{2}}{s^{2}}-\frac{\sqrt{2}}{t^{2}}\right)  H^{\prime
}\left(  y\right)  \rho\\
&  +\left(  \frac{\sqrt{2}}{s^{2}}+\frac{\sqrt{2}}{t^{2}}\right)  H\left(
y\right)  \rho^{\prime}+\left(  \frac{1}{t}-\frac{1}{s}\right)  \left(
-2H^{3}\left(  y\right)  +\frac{3\sqrt{2}}{2}\left(  \frac{1}{s}+\frac{1}%
{t}\right)  H^{\prime}\left(  y\right)  \right)  \rho\\
&  +\left(  \frac{1}{t}-\frac{1}{s}\right)  \left(  \frac{3\sqrt{2}}{2}\left(
\frac{1}{s}-\frac{1}{t}\right)  \rho^{\prime}-H^{\prime}\left(  z\right)
\right)  H\left(  y\right) \\
&  =\left(  \frac{1}{s^{3}}-\frac{1}{t^{3}}\right)  H\left(  y\right)
\rho+\frac{\sqrt{2}}{2}\left(  \frac{1}{t^{2}}-\frac{1}{s^{2}}\right)
H^{\prime}\left(  y\right)  \rho\\
&  +\sqrt{2}\left(  \frac{1}{t^{2}}+\frac{1}{s^{2}}-\frac{3}{2}\left(
\frac{1}{t}-\frac{1}{s}\right)  ^{2}\right)  H\left(  y\right)  \rho^{\prime
}\\
&  +\left(  \frac{1}{t}-\frac{1}{s}\right)  \left(  -2H^{3}\left(  y\right)
\rho-H^{\prime}\left(  z\right)  H\left(  y\right)  \right)  .
\end{align*}
We can verify $L\left(  \frac{5}{4}\tilde{\eta}-\phi\right)  $ is negative in
the region where $\frac{5}{4}\tilde{\eta}<\phi.$ Hence by maximum principle,
$\phi\leq\frac{5}{4}\tilde{\eta}.$ This finishes the proof.
\end{proof}

With the estimate of $\phi$ at hand, we see from Modica estimate that
\begin{align*}
u_{y}  &  \leq\frac{\left\vert \nabla u\right\vert }{\sqrt{2}}\leq\frac{1}%
{2}\left(  1-u^{2}\right) \\
&  \leq\frac{1}{2}\left(  1+H\left(  y\right)  H\left(  z\right)  \right)
\left(  1-H\left(  y\right)  H\left(  z\right)  +\phi\right) \\
&  \leq\frac{1}{2}\left(  1-H\left(  y\right)  ^{2}H\left(  z\right)
^{2}\right)  +\frac{3}{2}\frac{H\left(  y\right)  \left(  1+H\left(  y\right)
H\left(  z\right)  \right)  \left(  H\left(  z\right)  +zH^{\prime}\left(
z\right)  \right)  }{y^{2}-z^{2}}.
\end{align*}
In particular, as $y\rightarrow+\infty,$ $u_{y}$ decays at least like
$O\left(  y^{-2}\right)  .$ (Note that $u_{y}$ decays exponentially fast away
from the Simons cone). However, we expect that $u_{y}$ decays like $O\left(
y^{-3}\right)  .$ Let us consider
\[
\rho_{1}\left(  z\right)  :=H^{\prime}\left(  z\right)  \int_{0}^{z}\left(
H^{\prime-2}\int_{s}^{+\infty}tH^{\prime2}\left(  t\right)  dt\right)  ds.
\]
It satisfies
\[
-\rho_{1}^{\prime\prime}+\left(  3H^{2}-1\right)  \rho_{1}=zH^{\prime}\left(
z\right)  .
\]
Then intuitively, near the Simons cone, for $y$ large,
\[
u_{y}\sim\frac{12}{y^{3}}\rho_{1}\left(  z\right)  .
\]
However, it turns out to be quite delicate and difficult to get an explicit
global bound of $u_{y}$ with this decay rate. The estimate of $u_{y}$ will be
one of our main aim in this section.

\begin{lemma}
In $\Omega,$ $tu_{s}+su_{t}\leq0.$
\end{lemma}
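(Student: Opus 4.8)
The statement is precisely the analogue for $u=U_4$ of Lemma~\ref{ts-st}, so the plan is to prove it the same way: represent $w:=tu_s+su_t$ as a subsolution of an operator of the form $L-c$ with $c\ge 0$ in $\Omega$, check that $w\le 0$ on $\partial\Omega$ (and decays at infinity), and invoke the maximum principle of Theorem~\ref{maximum}.

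First I would compute how $L$ acts on $tu_s$ and on $su_t$. Using $\Delta u_s=u_{sss}+u_{stt}+\tfrac3s u_{ss}+\tfrac3t u_{st}$, a direct differentiation gives
\[
L(tu_s)=t\,Lu_s+2u_{st}+\frac3t u_s,\qquad L(su_t)=s\,Lu_t+2u_{st}+\frac3s u_t,
\]
the second identity following from the first by the interchange $s\leftrightarrow t$. Inserting \eqref{usut}, i.e.\ $Lu_s=\tfrac{3}{s^2}u_s$ and $Lu_t=\tfrac{3}{t^2}u_t$, and collecting the terms proportional to $u_s$ and to $u_t$,
\[
Lw=\Big(\frac{3t}{s^2}+\frac3t\Big)u_s+\Big(\frac{3s}{t^2}+\frac3s\Big)u_t+4u_{st}
=\frac{3(s^2+t^2)}{s^2t^2}\,(tu_s+su_t)+4u_{st}.
\]
Thus, with $c:=\dfrac{3(s^2+t^2)}{s^2t^2}$, we have the key identity $(L-c)w=4u_{st}$. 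Since $c>0$ throughout $\Omega$ and $u_{st}\ge 0$ in $\Omega$ (proved in \cite{C1}), this yields $(L-c)w\ge 0$ in $\Omega$.

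Next I would verify the boundary and decay conditions. On the diagonal $\{s=t\}$ of $\partial\Omega$, differentiating the symmetry relation $U(s,t)=-U(t,s)$ gives $u_s=-u_t$, so $w=s(u_s+u_t)=0$ there; on the segment $\{t=0,\ s>0\}$, the saddle solution is smooth on $\mathbb{R}^8$ and even in $(x_{m+1},\dots,x_{2m})$, so $u_t=0$ and again $w=0$; the origin is handled by continuity. Moreover, by the pointwise bounds of this section ($u_s,u_t=O((s+t)^{-2})$ near the Simons cone and exponentially small away from it) one has $tu_s+su_t\to 0$ as $|(s,t)|\to\infty$ inside $\Omega$. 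Hence $w\le 0$ on $\partial\Omega$ and $w$ vanishes at infinity, and Theorem~\ref{maximum} applied to $L-c$ gives $w\le 0$ in $\Omega$, which is the claim.

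The one delicate point is applying Theorem~\ref{maximum} on the unbounded domain $\Omega$, whose boundary meets $\{t=0\}$ where $c$ is unbounded: one must rule out a maximum of $w$ escaping either to infinity or to that degenerate part of the boundary. The decay of $w$ takes care of infinity; near $\{t=0\}$ one can localize to $\Omega_\varepsilon:=\{s>t>\varepsilon\}$, where $c$ is bounded, observe that on $\{t=\varepsilon\}$ one has $w=\varepsilon u_s+su_t\le \varepsilon u_s=O(\varepsilon)$ (using $u_t<0$ and the bound on $u_s$ from the Modica estimate \eqref{modica}), apply the maximum principle on $\Omega_\varepsilon$, and let $\varepsilon\to 0$. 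Once the identity $(L-c)w=4u_{st}$ is in hand, the remaining steps are routine.
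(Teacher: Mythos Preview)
Your proof is correct and follows essentially the same route as the paper: the identity $(L-c)w=4u_{st}$ with $c=\tfrac{3}{s^2}+\tfrac{3}{t^2}$ is exactly the paper's computation specialized to $k=1$ (the paper in fact carries out the calculation for $\eta=t^{k}u_s+s^{k}u_t$ and obtains $t^{k}u_s+s^{k}u_t\le 0$ for all $k\ge 1$). One small slip: near the Simons cone $u_s$ and $u_t$ are each $O(1)$, not $O((s+t)^{-2})$; it is $u_s+u_t=\sqrt{2}\,u_y$ that decays, which is still enough since $w=tu_s+su_t=t(u_s+u_t)+(s-t)u_t\to 0$ there.
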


\begin{proof}
By monotonicity, we know that $u_{s}+u_{t}\geq0.$ Let us define%
\[
\eta:=t^{k}u_{s}+s^{k}u_{t}.
\]
We have%
\[
L\eta=u_{s}t^{k}\left(  \frac{3}{s^{2}}+\frac{k^{2}+2k}{t^{2}}\right)
+u_{t}s^{k}\left(  \frac{3}{t^{2}}+\frac{k^{2}+2k}{s^{2}}\right)
+2ku_{st}\left(  s^{k-1}+t^{k-1}\right)  .
\]
We write this equation as
\begin{align*}
L\eta-\left(  \frac{3}{t^{2}}+\frac{k^{2}+2k}{s^{2}}\right)  g  &  =u_{s}%
t^{k}\left(  \frac{3}{s^{2}}+\frac{k^{2}+2k}{t^{2}}-\left(  \frac{3}{t^{2}%
}+\frac{k^{2}+2k}{s^{2}}\right)  \right) \\
&  +2ku_{st}\left(  s^{k-1}+t^{k-1}\right)  .
\end{align*}
Suppose $k\geq1.$ Using the fact that $u_{s}\geq0$ and $u_{st}\geq0,$ we
obtain
\[
L\eta-\left(  \frac{3}{t^{2}}+\frac{k^{2}+2k}{s^{2}}\right)  \eta\geq0.
\]
Hence from the maximum principle,
\[
t^{k}u_{s}+s^{k}u_{t}\leq0\text{ in }\Omega,\text{ if }k\geq1.
\]

\end{proof}

The above lemma in particular gives us a lower bound of $\left\vert
u_{t}\right\vert $ in terms of $u_{s}.$ That is,
\[
\left\vert u_{t}\right\vert \geq\frac{t}{s}u_{s}\text{ in }\Omega.
\]
We also note that this inequality implies that $u_{s}+u_{t}$ has the following
decaying property:
\begin{equation}
u_{s}+u_{t}\leq\frac{z}{y+z}u_{s}. \label{decay}%
\end{equation}

\begin{lemma}
\label{decayofus}In $\Omega,$ we have%
\[
u_{s}\leq2\left(  e^{0.85t}+\frac{4.9}{\sqrt{t}}\right)  e^{-0.85s}.
\]

\end{lemma}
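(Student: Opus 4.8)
The plan is to deduce the bound directly from the Modica estimate \eqref{modica} together with the pointwise control of $\phi:=u^{\ast}-u=H(y)H(z)-u\ge 0$ obtained earlier in this section, rather than by running a new maximum‑principle argument for $u_{s}$ itself. Since $u_{s}\le |\nabla u|$ and $|\nabla u|\le \tfrac{1}{\sqrt2}(1-u^{2})$ by \eqref{modica}, and since $0\le u=H(y)H(z)-\phi$ gives $1-u^{2}\le \bigl(1-H(y)^{2}H(z)^{2}\bigr)+2\phi$, the identity $1-H(x)^{2}=\sqrt2\,H'(x)$ yields
\[
u_{s}\ \le\ \frac{1-H(y)^{2}H(z)^{2}}{\sqrt2}+\sqrt2\,\phi\ =\ H'(z)+H(z)^{2}H'(y)+\sqrt2\,\phi\ \le\ H'(z)+H'(y)+\sqrt2\,\phi ,
\]
so it suffices to dominate these three terms by pieces of the right‑hand side of the Lemma.

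The first term is the heart of the matter. Using $H''=-\sqrt2\,HH'$, the function $z\mapsto \log\!\bigl(H'(z)e^{\,0.85\sqrt2\,z}\bigr)$ has derivative $-\sqrt2\,H(z)+0.85\sqrt2$, hence increases then decreases with its single maximum at the point where $H(z)=0.85$; evaluating there gives $H'(z)\le 0.83\cdot 2e^{-0.85\sqrt2\,z}=1.66\,e^{-0.85(s-t)}$ for every $z\ge 0$. (This is precisely why the exponent is $0.85$ and not $1$: since $H'(z)\sim 2\sqrt2\,e^{-\sqrt2 z}$, with rate $1$ and the constant $2$ the inequality fails for large $z$; lowering the rate to $0.85$ leaves a comfortable margin, here $2e^{-0.85(s-t)}-H'(z)\ge 0.34\,e^{-0.85(s-t)}$.) For the second term, $H'(y)\le 2\sqrt2\,e^{-(s+t)}$, and since $\sqrt t\,e^{-t}\le \tfrac1{\sqrt{2e}}$ and $t<s$ one gets $H'(y)\le \tfrac{2/\sqrt e}{\sqrt t}\,e^{-0.85s}$, which uses up only a small fraction of $\tfrac{9.8}{\sqrt t}e^{-0.85s}$. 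Thus the Lemma reduces to the single inequality $\sqrt2\,\phi\le 0.34\,e^{-0.85(s-t)}+\tfrac{c_{1}}{\sqrt t}e^{-0.85s}$ with $c_{1}:=9.8-2/\sqrt e$.

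This last inequality I would verify by splitting $\Omega$ according to the sizes of $s-t$ and $t$. When $s-t$ is small one does not even need $\phi$: the cruder estimate $u_{s}\le \tfrac1{\sqrt2}(1-u^{2})\le \tfrac1{\sqrt2}$ is already dominated, because $2e^{-0.85(s-t)}$ alone exceeds $\tfrac1{\sqrt2}$ for small $s-t$ and because $\tfrac{9.8}{\sqrt t}e^{-0.85s}\to\infty$ as $t\to 0$ (this also disposes of the whole region $t\lesssim e^{-1.7s}$). In the remaining range $z>1$, so \eqref{ro} applies; writing $\rho(z)=H'(z)\int_{0}^{z}\!\bigl(H'^{-2}\!\int_{\sigma}^{\infty}H'^{2}\bigr)d\sigma$ and inserting the bounds $H'(z)\le 1.66\,e^{-0.85(s-t)}$, $\int_{0}^{z}(\cdots)\le c_{\infty}z+C$ with $c_{\infty}=\tfrac{\sqrt2}{4}$ (the integrand being bounded and tending to $c_{\infty}$), and $\tfrac1t-\tfrac1s=\tfrac{2\sqrt2\,z}{y^{2}-z^{2}}$, the desired estimate becomes an elementary—though delicate—inequality in $s-t$ and $t$, in which the margin of $2e^{-0.85(s-t)}$ handles moderate $s-t$ and the $\tfrac{c_{1}}{\sqrt t}e^{-0.85s}$ term handles small $t$; near the cone, where \eqref{ro} is replaced by \eqref{g}, the same scheme runs with $\phi\le \tfrac{2(H(z)+zH'(z))}{st}$. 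The main obstacle is exactly this verification: several terms change sign, the margins are genuinely tight (the constants $0.85$ and $4.9$ are close to the smallest for which the argument closes), and one must keep careful track of the region decomposition and of which of \eqref{g}, \eqref{ro} is sharp enough. A maximum‑principle alternative based on $(L-\tfrac3{s^{2}})u_{s}=0$ from \eqref{usut} works far from the cone, but $\bigl(L-\tfrac3{s^{2}}\bigr)$ applied to the stated function is positive in a strip around the Simons cone (there $3u^{2}-1\approx -1$), so that route must fall back on \eqref{modica} near the cone anyway; the Modica‑based computation above is cleaner.
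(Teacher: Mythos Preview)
Your approach is genuinely different from the paper's, and the gap you yourself flag is real and decisive. The paper does \emph{not} try to bound $u_{s}$ pointwise via Modica plus the $\phi$--estimates. Instead it runs the maximum principle for the operator $L-\tfrac{3}{s^{2}}$ on the test function $\eta=(e^{0.85t}+4.9\,t^{-1/2})e^{-0.85s}$, but only in the subregion $\Gamma:=\Omega\cap\{z>2\}$. In $\Gamma$ one has $u\ge\max\{H(0.45y)H(0.45z),\,u^{\ast}-\phi\}$ close to $1$, so $3u^{2}-1$ is positive enough that $(L-\tfrac{3}{s^{2}})\eta\le 0$ there; the estimate then follows once one checks the one--dimensional boundary inequality $u_{s}\le 2\eta$ on $\{z=2\}$ (which indeed comes from Modica and the lower bounds on $u$). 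So your observation that the supersolution inequality fails near the Simons cone is correct, but the paper never claims it there: it deliberately restricts to $\{z>2\}$ and trades the two--dimensional verification you are attempting for a single boundary check.

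This is exactly why your route stalls. After the clean reductions $u_{s}\le H'(z)+H'(y)+\sqrt{2}\,\phi$ and $H'(z)\le 1.66\,e^{-0.85(s-t)}$, everything hinges on proving $\sqrt{2}\,\phi\le 0.34\,e^{-0.85(s-t)}+c_{1}t^{-1/2}e^{-0.85s}$ throughout $\Omega$. You only sketch a region decomposition and concede that ``the margins are genuinely tight''; that is the missing step, and it is not obviously true with these constants (the bound \eqref{ro} gives $\phi$ a prefactor $\tfrac{1}{t}-\tfrac{1}{s}$ and a factor $\rho(z)$ growing linearly in $z$ against $H'(z)$, so for moderate $t$ and moderately large $z$ the comparison is delicate). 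The paper's localization avoids this entirely: the differential inequality is checked where $u$ is near $1$, and the only place Modica enters is on the line $z=2$, a far easier verification than the global one you propose. Your claim that the Modica route is ``cleaner'' therefore inverts the situation.
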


\begin{proof}
$u_{s}$ satisfies
\[
Lu_{s}-\frac{3u_{s}}{s^{2}}=0.
\]
Let $a,b$ be parameters to be determined and $\eta=\left(  e^{at}+\frac
{b}{\sqrt{t}}\right)  e^{-as}.$ We have
\begin{align*}
L\eta-\frac{3\eta}{s^{2}}  &  =\left(  2a^{2}+\frac{3a}{t}-\frac{3a}%
{s}+1-3u^{2}-\frac{3}{s^{2}}\right)  e^{a\left(  t-s\right)  }\\
&  -\frac{3b}{4t^{\frac{5}{2}}}e^{-as}+\frac{b}{\sqrt{t}}\left(  a^{2}%
-\frac{3a}{s}\right)  e^{-as}\\
&  +\left(  1-3u^{2}-\frac{3}{s^{2}}\right)  \frac{b}{\sqrt{t}}e^{-as}.
\end{align*}
Note that in the region $z>2,$ $u$ is close to $1.$ More precisely,
\[
u\geq\max\left\{  H\left(  0.45y\right)  H\left(  0.45z\right)  ,u^{\ast}%
-\phi\right\}  .
\]
Let $\Gamma:=\Omega\cap\left\{  z>2\right\}  .$ Choose $a=\frac{3.4}{4},$
$b=4.9.$ We then have%
\[
L\eta-\frac{3\eta}{s^{2}}\leq0,\text{ in }\Gamma.
\]
Note that on $\partial\Gamma,$ $u_{s}\leq2\eta.$ The desired estimate then
follows from the maximum principle.
\end{proof}

\begin{lemma}
\label{us/s}$\frac{u_{s}}{s}-u_{ss}\geq0$ in $\Omega^{\ast}.$
\end{lemma}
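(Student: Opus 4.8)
The plan is to reduce the claim to a scalar linear equation for $V:=\frac{u_s}{s}-u_{ss}$ with a right‑hand side of a good sign, and then to apply the maximum principle of Theorem \ref{maximum}. First I would differentiate the first identity in $(\ref{usut})$ once more in $s$ and compute $L\bigl(\frac{u_s}{s}\bigr)$ directly, using that for the eight‑dimensional Laplacian $\Delta(s^{-1})=-s^{-3}$ and $\nabla(s^{-1})\cdot\nabla u_s=-s^{-2}u_{ss}$; this should give
\[
LV-\frac{8}{s^{2}}\,V=-6\,u\,u_s^{2}\qquad\text{in }\Omega .
\]
The same identity can be read off from $\partial_{x_1}\partial_{x_2}u=-\frac{x_1x_2}{s^{2}}V$ and $L(\partial_{x_1}\partial_{x_2}u)=6u\,\partial_{x_1}u\,\partial_{x_2}u$, since $\frac{x_1x_2}{s^{2}}$ is a degree‑two spherical harmonic on $S^{3}\subset\mathbb R^{4}$ with eigenvalue $8$. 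Because $u>0$ in $\Omega$, the right‑hand side is $\le 0$, so $V$ is a supersolution of $L-\frac{8}{s^{2}}$; and since $\frac{8}{s^{2}}\ge 0$, Theorem \ref{maximum} applies, with the positive function $u$ — which satisfies $(L-\frac{8}{s^{2}})u=-2u^{3}-\frac{8}{s^{2}}u\le 0$ — serving as a comparison function that also controls the behaviour at infinity.

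It then remains to verify $V\ge 0$ on the boundary and at infinity. The locus $\{t=0\}$ is harmless: $V$ is even in $t$ and smooth across it (in fact $V=-4s^{2}\,\mathcal U_{11}$ where $u=\mathcal U(s^{2},t^{2})$), so it suffices to argue on $\{s\ge t\ge 0\}$ and $\{t=0\}$ is not a boundary to be controlled. Near the origin $u=c\,(s^{2}-t^{2})+O(|x|^{4})$ with $c>0$, and inserting this into $(\ref{AC1})$ forces the quartic term to be $-\frac{c}{24}(s^{4}-t^{4})$, whence $V=\frac{c}{3}s^{2}+O(|x|^{4})>0$ there (recall $s^{2}>\frac12|x|^{2}$ in $\Omega^{\ast}$). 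As $s+t\to\infty$: away from the Simons cone $u_s$ and $u_{ss}$ decay exponentially, by Lemma \ref{decayofus} and the sub/supersolution bounds, so $V\to 0$; near the cone $u$ is modelled on the one‑dimensional profile $H(z)$, so $u_{ss}\approx\frac12H''(z)=-\frac12H(z)(1-H(z)^{2})<0$ while $\frac{u_s}{s}\to 0$, hence $V>0$.

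The delicate point — and the step I expect to be the main obstacle — is the behaviour on the Simons cone $\{s=t\}$ itself. There $u=0$, $u_s+u_t=0$, $u_{st}=0$, and (from $(\ref{AC1})$ together with $u_{tt}\le 0$ of \cite{C1}) $u_{ss}=-u_{tt}\ge 0$, so one must still prove $u_{ss}\le\frac{u_s}{s}$ on $\{s=t\}$; writing $c_1(y):=\partial_z u(y,0)$ for the transversal derivative of $u$ across the cone, this is exactly the assertion that $y\mapsto c_1(y)/y$ is nonincreasing. I would try to establish it using the estimates already available: the subsolution $H(ay)H(az)$, $a<0.45$, gives $c_1(y)\ge\frac{a}{\sqrt 2}H(ay)$; the supersolution $u^{\ast}=H(y)H(z)$ and the refined bound $(\ref{ro})$ on $\phi=u^{\ast}-u$ give a matching upper bound for $c_1$ and control of $c_1'$; and the Modica estimate $(\ref{modica})$ bounds $u_{ss}$ from above on $\{s=t\}$. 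Granting $V\ge 0$ there, Theorem \ref{maximum} yields $V\ge 0$ on $\{s>t>0\}$, hence $\frac{u_s}{s}-u_{ss}\ge 0$ in $\Omega^{\ast}$ by the evenness in $t$.
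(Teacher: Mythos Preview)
Your computation of the governing identity $LV-\tfrac{8}{s^{2}}V=-6\,u\,u_s^{2}\le 0$ is correct and is exactly the paper's; the spherical--harmonic derivation via $\partial_{x_1}\partial_{x_2}u=-\tfrac{x_1x_2}{s^{2}}V$ is a clean alternative. The gap is precisely where you flag it: the boundary inequality $V\ge 0$ on the Simons cone $\{s=t\}$, equivalently $u_z\ge y\,u_{yz}$ at $z=0$, or $(c_1(y)/y)'\le 0$. Your proposed route through the explicit sub/supersolutions and Modica is not the paper's and, as stated, does not close: those bounds control $u$ and $c_1=u_z|_{z=0}$ pointwise but give no direct handle on $c_1'$, and the Modica inequality bounds $|\nabla u|$, not the second derivative $u_{ss}=u_{yz}|_{z=0}$ that you need.

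The paper obtains the boundary inequality by a \emph{separate} maximum--principle argument, Lemma~\ref{usus}: one shows that $\xi:=\tfrac{u}{y}+\tfrac{u}{z}-u_y-u_z$ satisfies an inequality of the form $L\xi-c\,\xi\le 0$ (with $c\ge 0$) at any point where $\xi<0$, using only the earlier fact $tu_s+su_t\le 0$; hence $\xi\ge 0$ in $\Omega^{\ast}$, in particular $u\ge y\,u_y$ in $\Omega$ (equation~(\ref{eq1})). Since $u-yu_y$ vanishes on $\{z=0\}$ and is nonnegative for $z>0$, its $z$--derivative at $z=0$ is nonnegative, which is exactly $u_z-yu_{yz}\ge 0$, i.e.\ (\ref{uzy}). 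This gives $V\ge 0$ on $\partial\Omega^{\ast}$, after which Theorem~\ref{maximum} finishes the proof as you outlined. Note the forward reference: (\ref{uzy}) appears in the text after Lemma~\ref{us/s}, but Lemma~\ref{usus} does not use Lemma~\ref{us/s}, so there is no circularity.
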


\begin{proof}
Let $\eta=u_{s}s^{-1}-u_{ss}.$ Then by $\left(  \ref{uzy}\right)  ,$ $\eta
\geq0$ on $\partial\Omega^{\ast}.$ We compute
\[
L\eta=\frac{8}{s^{3}}u_{s}-\frac{8u_{ss}}{s^{2}}-6u_{s}^{2}u.
\]
Hence
\[
L\eta-\frac{8}{s^{2}}\eta\leq0.
\]
By maximum principle, $\eta\geq0.$
\end{proof}

\begin{lemma}
In $\Omega^{\ast},$ we have%
\[
\frac{u_{s}}{s}+\frac{u_{t}}{t}-u_{ss}-u_{tt}\geq0.
\]

\end{lemma}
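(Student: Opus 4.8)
The strategy is to apply the maximum principle of Theorem~\ref{maximum} to
\[
\psi:=\frac{u_s}{s}+\frac{u_t}{t}-u_{ss}-u_{tt}=\eta_1+\eta_2,\qquad
\eta_1:=\frac{u_s}{s}-u_{ss},\quad \eta_2:=\frac{u_t}{t}-u_{tt}.
\]
By Lemma~\ref{us/s} we already know $\eta_1\ge 0$ in $\Omega^\ast$; the point is that $\eta_2$ need not be nonnegative, so we must handle the sum directly rather than each summand.

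First I would record the two linearized identities. Differentiating \eqref{usut} once more, exactly as in the proof of Lemma~\ref{us/s}, gives
\[
L\eta_1=\frac{8}{s^2}\,\eta_1-6u\,u_s^2 ,
\]
and since $Lu_t=\frac{3}{t^2}u_t$ by \eqref{usut} as well, the same computation with $s$ replaced by $t$ yields
\[
L\eta_2=\frac{8}{t^2}\,\eta_2-6u\,u_t^2 .
\]
Adding these and regrouping with the coefficient $8/t^2$,
\[
L\psi=\frac{8}{t^2}\,\psi+8\Bigl(\frac{1}{s^2}-\frac{1}{t^2}\Bigr)\eta_1-6u\bigl(u_s^2+u_t^2\bigr).
\]
Here the decisive choice is the shift $8/t^2$ rather than $8/s^2$: in $\Omega^\ast$ one has $s>|t|$, so $\frac{1}{s^2}-\frac{1}{t^2}<0$, while $\eta_1\ge 0$ by Lemma~\ref{us/s} and $u>0$ in $\Omega^\ast$ (recall $u$ is even in $t$). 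Hence the last two terms are nonpositive and
\[
L\psi-\frac{8}{t^2}\,\psi\le 0\quad\text{in }\Omega^\ast ,
\]
with $8/t^2\ge 0$, which is exactly the setting of Theorem~\ref{maximum}.

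It remains to verify $\psi\ge 0$ on $\partial\Omega^\ast$. On the Simons cone $\{|t|=s\}$ the solution $u$ vanishes identically, so differentiating along the cone gives $u_s+u_t=0$ there; substituting $u=0$ and $s=|t|$ into \eqref{AC1} then forces $u_{ss}+u_{tt}=0$ on the cone, whence $\psi=\frac{u_s+u_t}{s}-(u_{ss}+u_{tt})=0$. On the remaining boundary portion $\{t=0\}$, evenness of $u$ in $t$ gives $u_t(s,0)=0$ and $u_t(s,t)=u_{tt}(s,0)\,t+O(t^3)$, so $\eta_2(s,0)=u_{tt}(s,0)-u_{tt}(s,0)=0$ and therefore $\psi(s,0)=\eta_1(s,0)\ge 0$ by continuity from Lemma~\ref{us/s}. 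Since $\psi$ is bounded near the origin and tends to $0$ at infinity, the maximum principle (Theorem~\ref{maximum}, in the form already used for Lemma~\ref{us/s}) yields $\psi\ge 0$ in $\Omega^\ast$.

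I do not expect a serious obstacle: once the identity for $L\psi$ is in hand the proof reduces to the sign bookkeeping above, and the only genuinely important idea is the regrouping with $8/t^2$, which converts the a priori unsigned $\eta_2$ into something controllable. If one prefers to avoid the (harmless) singularity of this coefficient along $\{t=0\}$, one can use that $\psi$ is even in $t$ and run the same argument on $\Omega$ instead, where the axis $\{t=0\}$ carries $\psi(s,0)=\eta_1(s,0)\ge 0$ and the cone carries $\psi=0$.
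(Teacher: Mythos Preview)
Your proof is correct and follows essentially the same route as the paper: compute $L\psi$, regroup with the coefficient $8/t^{2}$, and use $\eta_{1}=u_{s}/s-u_{ss}\ge 0$ from Lemma~\ref{us/s} together with $u>0$ to conclude $L\psi-\tfrac{8}{t^{2}}\psi\le 0$, then apply Theorem~\ref{maximum}. The paper arrives at the same identity by computing $L\psi$ directly rather than via the split $\psi=\eta_{1}+\eta_{2}$, and simply asserts $\psi=0$ on $\partial\Omega^{\ast}$ (meaning the lines $s=\pm t$), whereas you additionally spell out the behaviour on $\{t=0\}$; these are only presentational differences.
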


\begin{proof}
Let $\eta=\frac{u_{s}}{s}+\frac{u_{t}}{t}-u_{ss}-u_{tt}.$ Then $\eta=0$ on
$\partial\Omega^{\ast}.$ We have%
\begin{align*}
L\eta &  =\frac{8u_{s}}{s^{3}}+\frac{8u_{t}}{t^{3}}-\frac{8u_{ss}}{s^{2}%
}-\frac{8u_{tt}}{t^{2}}-6u\left(  u_{s}^{2}+u_{t}^{2}\right) \\
&  =\frac{8}{t^{2}}\eta+\left(  \frac{8}{t^{2}}-\frac{8}{s^{2}}\right)
\left(  u_{ss}-\frac{u_{s}}{s}\right)  -6u\left(  u_{s}^{2}+u_{t}^{2}\right)
.
\end{align*}
Using the fact that $u_{ss}-\frac{u_{s}}{s}\leq0,$ we find that $L\eta
-\frac{8}{t^{2}}\eta\leq0.$ Then by maximum principle, $\eta\geq0.$
\end{proof}

We know that for $y$ large, $u_{y}$ decays like $O\left(  y^{-3}\right)  .$
However, to estimate the second derivatives of $u,$ we need to have some
explicit global estimate of $u_{s}+u_{t}$ in $\Omega.$ We shall prove that
$u_{s}+u_{t}$ can be bounded by functions of the form
\[
\left(  \frac{1}{t^{2}}-\frac{1}{s^{2}}\right)  \left(  au_{z}+bu_{z}^{\alpha
}\right)  ,
\]
with suitable constants $a,b,\alpha.$ We would like to prove the following(non-optimal)

\begin{proposition}
\label{us+ut}In $\Omega,$ we have
\begin{equation}
u_{s}+u_{t}\leq\left(  \frac{1}{t^{2}}-\frac{1}{s^{2}}\right)  \left(
2\left(  u_{s}-u_{t}\right)  +\sqrt{u_{s}-u_{t}}\right)  . \label{bound}%
\end{equation}

\end{proposition}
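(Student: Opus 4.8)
The plan is to prove the pointwise inequality $(\ref{bound})$ by the same comparison-principle strategy used throughout this section, applied to a carefully chosen barrier. Set
\[
\eta:=\left(\frac{1}{t^{2}}-\frac{1}{s^{2}}\right)\left(2\left(u_{s}-u_{t}\right)+\sqrt{u_{s}-u_{t}}\right)-\left(u_{s}+u_{t}\right).
\]
On $\partial\Omega$ (i.e.\ on $\{s=t\}$) the factor $t^{-2}-s^{-2}$ vanishes while $u_{s}+u_{t}$ also vanishes there by the antisymmetry $U(s,t)=-U(t,s)$; hence $\eta=0$ on $\partial\Omega$ (and $\eta\to 0$ at infinity since $u_{s}+u_{t}$ decays and the whole expression is dominated by $u_{s}$, which decays by Lemma \ref{decayofus}). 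The goal is then to show that in the region $\{\eta<0\}$ one has an inequality of the form $L\eta-c\,\eta\le 0$ with $c\ge 0$ in $\Omega$, so that Theorem \ref{maximum} forces $\eta\ge 0$ everywhere, which is exactly $(\ref{bound})$.

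The key computational steps are as follows. First, using $(\ref{usut})$ and the identity $Lu_{z}=\tfrac{3}{2}\big(s^{-2}+t^{-2}\big)u_{z}-3\big(s^{-2}-t^{-2}\big)u_{y}$ (obtained by combining $Lu_{s}=3s^{-2}u_{s}$ and $Lu_{t}=3t^{-2}u_{t}$ and switching to the $y,z$ variables), compute $L\big((t^{-2}-s^{-2})u_{z}\big)$; the Leibniz rule produces the factor $\Delta(t^{-2}-s^{-2})$ together with cross terms $2\nabla(t^{-2}-s^{-2})\cdot\nabla u_{z}$, and one must track these against $(t^{-2}-s^{-2})Lu_{z}$. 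A parallel computation is needed for the nonlinear term $L\big((t^{-2}-s^{-2})u_{z}^{1/2}\big)$, where the chain rule gives $\Delta(u_{z}^{1/2})=\tfrac12 u_{z}^{-1/2}\Delta u_{z}-\tfrac14 u_{z}^{-3/2}|\nabla u_{z}|^{2}$, and the gradient-squared term is controlled from above by the Modica estimate $(\ref{modica})$ applied to the function $u_z$ (equivalently by the bounds on $u_{ss},u_{st},u_{tt}$ from the earlier lemmas). Finally, apply $L$ to $-(u_{s}+u_{t})$: again writing things in $y,z$ variables gives $L(u_{s}+u_{t})=\tfrac{3}{2}(s^{-2}+t^{-2})(u_{s}+u_{t})-3(s^{-2}-t^{-2})(u_{s}-u_{t})$, and the crucial cancellation is that the ``bad'' term $-3(s^{-2}-t^{-2})(u_{s}-u_{t})$ on the right is precisely matched (up to the controllable error terms) by the leading part of $L$ applied to $2(t^{-2}-s^{-2})(u_{s}-u_{t})$ in $\eta$ — this is why the constant $2$ and the structure $(t^{-2}-s^{-2})(au_{z}+bu_{z}^{\alpha})$ were chosen. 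Assembling these and using $u_{s}-u_{t}=\sqrt2\,u_{z}\ge 0$, $u_{st}\ge 0$, $u_{tt}\le 0$, $u_{ss}-u_s/s\le 0$ (Lemma \ref{us/s}), together with the decay bound of Lemma \ref{decayofus} and the estimate $(\ref{decay})$ for $u_s+u_t$, one checks that all remaining terms have a favourable sign in $\{\eta<0\}$, provided the coefficient $c$ picked up along the way is nonnegative in $\Omega$ — which it is, being a sum of $t^{-2}$-type terms.

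I expect the main obstacle to be the bookkeeping in the region near the $s$-axis (i.e.\ $z$ large, $t$ bounded, $s\to\infty$), where $t^{-2}-s^{-2}$ is of order $t^{-2}$ but not small, $u_{z}$ is exponentially small, and hence $\sqrt{u_z}\gg u_z$; there the $\sqrt{u_{s}-u_{t}}$ term dominates the barrier and the delicate point is that the negative contribution $-\tfrac14 u_z^{-3/2}|\nabla u_z|^2$ from the chain rule must be shown not to overwhelm the positive terms, which uses that $|\nabla u_z|\lesssim u_z$ by the Modica-type bound on $u_z$. The other delicate point is verifying that the error terms coming from the first-order part of $L$ (the $\tfrac3s\partial_s+\tfrac3t\partial_t$ piece acting on the rational factor $t^{-2}-s^{-2}$) can be absorbed; this is where the precise power $2$ in $t^{-2},s^{-2}$ — matching the Jacobi-field exponent — makes the algebra close. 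Once $L\eta-c\eta\le 0$ is established in $\{\eta<0\}$ with $c\ge 0$, the conclusion is immediate from Theorem \ref{maximum} and the boundary/decay information on $\eta$.
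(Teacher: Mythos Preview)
Your overall strategy is the paper's own: define the barrier $\eta$, verify the boundary behaviour, and show $L\eta-c\eta\le 0$ on $\{\eta<0\}$ for some $c\ge 0$ so that Theorem \ref{maximum} applies. However, there is a genuine sign error in how you treat the term coming from the chain rule on $\sqrt{u_{s}-u_{t}}$. The contribution $-\tfrac14\,(u_{s}-u_{t})^{-3/2}\,|\nabla(u_{s}-u_{t})|^{2}$ enters $L\eta$ with a negative sign (after multiplication by $t^{-2}-s^{-2}>0$), and since you want $L\eta-c\eta\le 0$, this term is \emph{favourable}: it must absorb positive leftovers, not be absorbed by them. Accordingly the paper uses a \emph{lower} bound on $|\nabla(u_{s}-u_{t})|^{2}$, namely
\[
|\nabla(u_{s}-u_{t})|^{2}\ \ge\ \tfrac12\,(u_{ss}+u_{tt}-2u_{st})^{2}\ =\ \tfrac12\Bigl(u^{3}-u-3\bigl(\tfrac{u_{s}}{s}+\tfrac{u_{t}}{t}\bigr)\Bigr)^{2},
\]
obtained directly from the Allen--Cahn equation. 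Your plan to bound $|\nabla u_{z}|$ from above ``by the Modica estimate applied to $u_{z}$'' goes in the wrong direction and invokes a nonexistent inequality: Modica's estimate \eqref{modica} bounds $|\nabla u|$ in terms of $F(u)$, not $|\nabla u_{z}|$ in terms of $u_{z}$.

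A second, related gap is the handling of the cross term $2\nabla(t^{-2}-s^{-2})\cdot\nabla(u_{s}-u_{t})$, which produces $\tfrac{4}{s^{3}}(u_{ss}-u_{st})-\tfrac{4}{t^{3}}(u_{st}-u_{tt})$. The paper does not merely ``track'' this term; it rewrites $u_{ss}$ via the PDE so that the combination becomes $\tfrac{u^{3}-u}{s^{3}}-\tfrac{3}{s^{3}}(\tfrac{u_{s}}{s}+\tfrac{u_{t}}{t})+(\tfrac{1}{t^{3}}-\tfrac{1}{s^{3}})u_{tt}-(\tfrac{1}{s^{3}}+\tfrac{1}{t^{3}})u_{st}$ and then discards the last two pieces using $u_{tt}\le 0$, $u_{st}\ge 0$. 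Without this rewriting you have no independent control on $u_{ss}$ at this point of the argument (Lemma \ref{us/s} only gives $u_{ss}\le u_{s}/s$, which is the wrong direction for the $\tfrac{4}{s^{3}}u_{ss}$ term). In short, the architecture of your argument is right, but the two key estimates that make the algebra close --- the lower bound on $|\nabla(u_{s}-u_{t})|^{2}$ and the PDE-based estimate of the cross term --- are either reversed or missing.
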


\begin{proof}
$u_{s}+u_{t}$ satisfies
\[
L\left(  u_{s}+u_{t}\right)  =\frac{3u_{s}}{s^{2}}+\frac{3u_{t}}{t^{2}}.
\]
We write it in the form:
\[
L\left(  u_{s}+u_{t}\right)  -\frac{3}{2}\left(  \frac{1}{s^{2}}+\frac
{1}{t^{2}}\right)  \left(  u_{s}+u_{t}\right)  =\frac{3}{2}\left(  \frac
{1}{s^{2}}-\frac{1}{t^{2}}\right)  \left(  u_{s}-u_{t}\right)  .
\]
Let $\alpha\in\left[  0,1\right]  $ be a parameter. Define the function
\[
\eta_{\alpha}=\left(  \frac{1}{t^{2}}-\frac{1}{s^{2}}\right)  \left(
u_{s}-u_{t}\right)  ^{\alpha}.
\]
Using the fact that $\Delta\left(  t^{-2}\right)  =\Delta\left(
s^{-2}\right)  =0,$ we get%
\begin{align}
L\eta_{\alpha}  &  =\alpha\left(  u_{s}-u_{t}\right)  ^{\alpha-1}\left(
\frac{4}{s^{3}}\left(  u_{ss}-u_{st}\right)  -\frac{4}{t^{3}}\left(
u_{st}-u_{tt}\right)  \right) \nonumber\\
&  +\left(  \frac{1}{t^{2}}-\frac{1}{s^{2}}\right)  L\left(  \left(
u_{s}-u_{t}\right)  ^{\alpha}\right)  . \label{alpha}%
\end{align}
We compute,
\begin{align*}
L\left(  \left(  u_{s}-u_{t}\right)  ^{\alpha}\right)   &  =\left(
u_{s}-u_{t}\right)  ^{\alpha-1}\left(  \frac{3\alpha u_{s}}{s^{2}}%
-\frac{3\alpha u_{t}}{t^{2}}+\left(  1-\alpha\right)  \left(  1-3u^{2}\right)
\left(  u_{s}-u_{t}\right)  \right) \\
&  +\alpha\left(  \alpha-1\right)  \left(  u_{s}-u_{t}\right)  ^{\alpha
-2}\left\vert \nabla\left(  u_{s}-u_{t}\right)  \right\vert ^{2}.
\end{align*}
Note that the term $\frac{3\alpha u_{s}}{s^{2}}-\frac{3\alpha u_{t}}{t^{2}}$
is positive. However, for $\alpha\in\left[  0,1\right]  ,$ we also have,
\begin{align*}
&  \frac{3\alpha u_{s}}{s^{2}}-\frac{3\alpha u_{t}}{t^{2}}-\frac{3}{2}\left(
\frac{1}{s^{2}}+\frac{1}{t^{2}}\right)  \left(  u_{s}-u_{t}\right) \\
&  =\frac{3\left(  \alpha-1\right)  }{2}\left(  \frac{1}{t^{2}}+\frac{1}%
{s^{2}}\right)  \left(  u_{s}-u_{t}\right)  -\frac{3\alpha}{2}\left(  \frac
{1}{t^{2}}-\frac{1}{s^{2}}\right)  \left(  u_{s}+u_{t}\right) \\
;  &  \leq-\frac{3\alpha}{2}\left(  \frac{1}{t^{2}}-\frac{1}{s^{2}}\right)
\left(  u_{s}+u_{t}\right)  \leq0.
\end{align*}
>From this, we know that intuitively, for $\alpha=1,$ $L\eta_{\alpha}-\frac
{3}{2}\left(  \frac{1}{s^{2}}+\frac{1}{t^{2}}\right)  \eta_{\alpha}$ is
negative near the Simons cone; while for $0<\alpha<1,$ it is negative away
from the Simons cone. In view of this, we consider a combination of $\eta_{1}$
and $\eta_{\alpha},$ where $\alpha=\frac{1}{2}$(this choice may not be
optimal). That is,%
\begin{align*}
h  &  :=\eta_{1}+\frac{1}{2}\eta_{\frac{1}{2}}\\
&  =\left(  \frac{1}{t^{2}}-\frac{1}{s^{2}}\right)  \left(  u_{s}-u_{t}%
+\frac{1}{2}\left(  u_{s}-u_{t}\right)  ^{\alpha}\right)  .
\end{align*}
We then set $h^{\ast}:=h-\left(  u_{s}+u_{t}\right)  .$ Observe that $h^{\ast
}\geq0$ on $\partial\Omega.$ We would like to use the maximum principle to
show $h^{\ast}\geq0$ in $\Omega.$

Using $\left(  \ref{alpha}\right)  ,$ we compute
\begin{align*}
&  Lh^{\ast}-\frac{3}{2}\left(  \frac{1}{s^{2}}+\frac{1}{t^{2}}\right)
h^{\ast}\\
&  =\left(  1+\frac{1}{2}\alpha\left(  u_{s}-u_{t}\right)  ^{\alpha-1}\right)
\left(  \frac{4}{s^{3}}\left(  u_{ss}-u_{st}\right)  -\frac{4}{t^{3}}\left(
u_{st}-u_{tt}\right)  \right) \\
&  \ +\left(  \frac{1}{t^{2}}-\frac{1}{s^{2}}\right)  L\left(  u_{s}%
-u_{t}+\frac{1}{2}\left(  u_{s}-u_{t}\right)  ^{\alpha}\right) \\
&  -\frac{3}{2}\left(  \frac{1}{t^{2}}+\frac{1}{s^{2}}\right)  h+\frac{3}%
{2}\left(  \frac{1}{t^{2}}-\frac{1}{s^{2}}\right)  \left(  u_{s}-u_{t}\right)
.
\end{align*}
We first observe that
\begin{align*}
&  \left(  \frac{1}{t^{2}}-\frac{1}{s^{2}}\right)  \left(  u_{s}-u_{t}\right)
^{\alpha-1}\left(  \frac{3\alpha u_{s}}{s^{2}}-\frac{3\alpha u_{t}}{t^{2}%
}\right)  -\frac{3}{2}\left(  \frac{1}{t^{2}}+\frac{1}{s^{2}}\right)  \left(
\frac{1}{t^{2}}-\frac{1}{s^{2}}\right)  \left(  u_{s}-u_{t}\right)  ^{\alpha
}\\
&  =3\left(  \frac{1}{t^{2}}-\frac{1}{s^{2}}\right)  \left(  u_{s}%
-u_{t}\right)  ^{\alpha-1}\left(  \frac{\alpha u_{s}}{s^{2}}-\frac{\alpha
u_{t}}{t^{2}}-\frac{1}{2}\left(  \frac{1}{t^{2}}+\frac{1}{s^{2}}\right)
\left(  u_{s}-u_{t}\right)  \right) \\
&  =3\left(  \frac{1}{t^{2}}-\frac{1}{s^{2}}\right)  \left(  u_{s}%
-u_{t}\right)  ^{\alpha-1}\left(  \left(  \alpha-1\right)  \left(  \frac
{u_{s}}{s^{2}}-\frac{u_{t}}{t^{2}}\right)  +\frac{1}{2}\left(  \frac{1}{s^{2}%
}-\frac{1}{t^{2}}\right)  \left(  u_{s}+u_{t}\right)  \right)  .
\end{align*}
Let us denote the right hand side by $I.$ We know that $I\leq0.$ It follows
that
\begin{align}
&  Lh^{\ast}-\frac{3}{2}\left(  \frac{1}{s^{2}}+\frac{1}{t^{2}}\right)
h^{\ast}\nonumber\\
&  \leq\left(  1+\alpha\left(  u_{s}-u_{t}\right)  ^{\alpha-1}\right)  \left(
\frac{4}{s^{3}}\left(  u_{ss}-u_{st}\right)  -\frac{4}{t^{3}}\left(
u_{st}-u_{tt}\right)  \right) \nonumber\\
&  +\left(  1-\alpha\right)  \left(  \frac{1}{t^{2}}-\frac{1}{s^{2}}\right)
\left(  \left(  u_{s}-u_{t}\right)  ^{\alpha}\left(  1-3u^{2}\right)
-\alpha\left(  u_{s}-u_{t}\right)  ^{\alpha-2}\left\vert \nabla\left(
u_{s}-u_{t}\right)  \right\vert ^{2}\right) \nonumber\\
&  +\frac{3}{2}\left(  \frac{1}{t^{2}}-\frac{1}{s^{2}}\right)  \left(
u_{s}-u_{t}\right)  +\frac{1}{2}I. \label{Lh*}%
\end{align}
Suppose to the contrary that the inequality $\left(  \ref{bound}\right)  $ was
not true. Consider the region $\Gamma$ where $u_{s}+u_{t}>h.$ Now we would
like to show that in $\Gamma,$ $Lh^{\ast}-\frac{3}{2}\left(  \frac{1}{s^{2}%
}+\frac{1}{t^{2}}\right)  h^{\ast}\leq0.$ We have
\begin{align*}
&  \frac{1}{s^{3}}\left(  u_{ss}-u_{st}\right)  -\frac{1}{t^{3}}\left(
u_{st}-u_{tt}\right) \\
&  =\frac{u_{ss}}{s^{3}}+\frac{u_{tt}}{t^{3}}-\left(  \frac{1}{s^{3}}+\frac
{1}{t^{3}}\right)  u_{st}\\
&  =\frac{u^{3}-u}{s^{3}}-\frac{3}{s^{3}}\left(  \frac{u_{s}}{s}+\frac{u_{t}%
}{t}\right)  +\left(  \frac{1}{t^{3}}-\frac{1}{s^{3}}\right)  u_{tt}-\left(
\frac{1}{s^{3}}+\frac{1}{t^{3}}\right)  u_{st}.
\end{align*}
In particular, since $u_{tt}<0$ and $u_{st}>0,$
\begin{equation}
\frac{1}{s^{3}}\left(  u_{ss}-u_{st}\right)  -\frac{1}{t^{3}}\left(
u_{st}-u_{tt}\right)  \leq\frac{u^{3}-u}{s^{3}}-\frac{3}{s^{3}}\left(
\frac{u_{s}}{s}+\frac{u_{t}}{t}\right)  . \label{uss-utt}%
\end{equation}
On the other hand,
\begin{align}
\left\vert \nabla\left(  u_{s}-u_{t}\right)  \right\vert ^{2}  &  =\left(
u_{ss}-u_{st}\right)  ^{2}+\left(  u_{st}-u_{tt}\right)  ^{2}\nonumber\\
&  \geq\frac{1}{2}\left(  u_{ss}+u_{tt}-2u_{st}\right)  ^{2}\nonumber\\
&  \geq\frac{1}{2}\left(  u^{3}-u-3\left(  \frac{u_{s}}{s}+\frac{u_{t}}%
{t}\right)  \right)  ^{2}. \label{nablaus-ut}%
\end{align}
Inserting estimates $\left(  \ref{uss-utt}\right)  ,\left(  \ref{nablaus-ut}%
\right)  $ into $\left(  \ref{Lh*}\right)  ,$ and using the fact that
\[
1-u^{2}\geq u_{s}-u_{t},
\]
we conclude that
\[
Lh^{\ast}-\frac{3}{2}\left(  \frac{1}{s^{2}}+\frac{1}{t^{2}}\right)  h^{\ast
}\leq0,\text{ in }\Gamma.
\]
By maximum principle, $h^{\ast}\geq0.$ The proof is then completed.
\end{proof}

We remark that the estimates in the previous Proposition is not optimal, since
we have not used the information of $u_{st},$ which is, intuitively, of the
order $\,-\frac{1}{2}\left(  u_{ss}+u_{tt}\right)  $.

\begin{lemma}
\label{u-u}$u$ satisfies%
\begin{equation}
u-u^{3}+u_{ss}\geq0\text{ in }\Omega. \label{u-u3}%
\end{equation}

\end{lemma}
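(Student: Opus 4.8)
The plan is to prove $u-u^{3}+u_{ss}\geq 0$ in $\Omega$ by setting $\eta:=u-u^{3}+u_{ss}$ and applying the maximum principle of Theorem \ref{maximum}. First I would record that, by equation $(\ref{AC1})$ in the $(s,t)$--coordinates, $u_{ss}+u_{tt}+\frac{3}{s}u_{s}+\frac{3}{t}u_{t}=u^{3}-u$, so that $\eta=-u_{tt}-\frac{3}{s}u_{s}-\frac{3}{t}u_{t}$; thus $\eta$ is a combination of lower--order derivatives for which we already have sign information ($u_{tt}\le 0$, $u_s>0$, $u_t<0$ in $\Omega$). The boundary behaviour should be checked first: on the diagonal $\{s=t\}$ one uses the antisymmetry $U(s,t)=-U(t,s)$, which forces $u=0$, $u_s+u_t=0$ and a symmetry relation between $u_{ss}$ and $u_{tt}$ there, while on $\{t=0\}$ the regularity of $u$ as a function on $\mathbb{R}^{8}$ together with $u_t\le 0$, $u_s>0$, $u_{tt}\le0$ should give $\eta\ge 0$; alternatively one compares with the subsolutions $H(ay)H(az)$ or with $u^{\ast}=H(y)H(z)$ to get $\eta\ge 0$ on $\partial\Omega$.

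Next I would differentiate. Applying $L$ to $\eta$, using $Lu_s=\frac{3}{s^{2}}u_s$ and $Lu_t=\frac{3}{t^{2}}u_t$ from $(\ref{usut})$ and the identity $L(u-u^{3})=L u - 3u^{2}Lu - 6u|\nabla u|^{2} = -(u^{3}-u)\cdot(\text{factor}) -6u|\nabla u|^{2}$ expanded carefully, together with $Lu_{ss}$ (obtained by differentiating $(\ref{usut})$ once more in $s$, which produces terms $\frac{3}{s^{2}}u_{ss}-\frac{6}{s^{3}}u_s$ plus $-6uu_s^{2}$), one collects everything into the form
\begin{equation*}
L\eta - c(s,t)\,\eta = (\text{manifestly nonpositive remainder}),
\end{equation*}
where $c\ge 0$ in $\Omega$. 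The natural candidate for the coefficient is $c=\frac{3}{s^{2}}$ (the same one controlling $u_s$), and the remainder should be organized using the already--established sign facts $u_{tt}\le 0$, $u_{st}\ge 0$, $u_s>0$, $-u_t>0$, and Lemma \ref{us/s} ($\frac{u_s}{s}-u_{ss}\ge 0$ in $\Omega^{\ast}$), as well as the Modica inequality $(\ref{modica})$ to dominate $|\nabla u|^{2}$ terms. The point is that $\eta$ itself equals $-u_{tt}-\frac{3}{s}u_s-\frac{3}{t}u_t$, so after subtracting $c\eta$ the $u_{tt}$ and lower--order pieces recombine favourably.

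I expect the main obstacle to be the bookkeeping in the remainder term: the quadratic gradient contributions $-6u(u_s^{2}+\dots)$ coming from $L(u^{3})$ and $Lu_{ss}$ have the "wrong" sign when $u>0$, and one must show they are absorbed either by the negative curvature term $u_{tt}$, by the cross term through $u_{st}\ge 0$, or by the geometric factors $\frac{1}{s^{3}},\frac{1}{t^{3}}$ — this is exactly the kind of delicate cancellation flagged in the text as "quite nontrivial." A secondary subtlety is that Lemma \ref{us/s} is stated on $\Omega^{\ast}$ while we work on $\Omega$; since $\Omega\subset\Omega^{\ast}$ this is harmless, but one should be careful that the maximum principle is applied with the operator $L-c$ on the domain $\Omega$ with the boundary data just verified. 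Once $L\eta - c\eta\le 0$ with $c\ge 0$ and $\eta\ge 0$ on $\partial\Omega$ are both in hand, Theorem \ref{maximum} gives $\eta\ge 0$ throughout $\Omega$, which is $(\ref{u-u3})$.
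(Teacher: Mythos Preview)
Your proposal is almost there, but you miss that you have already finished the proof in your first paragraph. Once you observe
\[
\eta = u-u^{3}+u_{ss} = -u_{tt}-\tfrac{3}{s}u_{s}-\tfrac{3}{t}u_{t},
\]
the result is immediate in all of $\Omega$, not just on the boundary: you know $u_{tt}\le 0$, and the earlier lemma $tu_{s}+su_{t}\le 0$ (which you do not invoke, but should) gives $\tfrac{u_{s}}{s}+\tfrac{u_{t}}{t}\le 0$, so $\eta\ge 0$ everywhere. This is exactly the paper's first proof; no maximum principle is needed. The signs $u_{s}>0$, $u_{t}<0$ alone are not enough, since $-\tfrac{3}{s}u_{s}$ has the wrong sign; the missing ingredient is precisely $tu_{s}+su_{t}\le 0$.

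If you do want the maximum-principle route (the paper gives it as a second proof), your computation has two errors that make the remainder look harder than it is. Differentiating $Lu_{s}=\tfrac{3}{s^{2}}u_{s}$ in $s$ yields $Lu_{ss}=\tfrac{6}{s^{2}}u_{ss}-\tfrac{6}{s^{3}}u_{s}+6uu_{s}^{2}$ (coefficient $6$, not $3$, and sign $+$, not $-$), and one computes directly $L(u-u^{3})=-6u|\nabla u|^{2}$. With the correct coefficient $c=\tfrac{6}{s^{2}}$ (not $\tfrac{3}{s^{2}}$) the remainder is
\[
L\eta-\tfrac{6}{s^{2}}\eta=-\tfrac{6}{s^{2}}(u-u^{3})-\tfrac{6}{s^{3}}u_{s}-6uu_{t}^{2}\le 0,
\]
since the $+6uu_{s}^{2}$ from $Lu_{ss}$ cancels against the $u_{s}^{2}$ part of $-6u|\nabla u|^{2}$. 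There is no delicate cancellation, no need for Modica, Lemma~\ref{us/s}, or $u_{st}\ge 0$; the worry you flag about ``wrong-sign gradient terms'' disappears once the signs in $Lu_{ss}$ are corrected.
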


\begin{proof}
This inequality follows directly from the equation
\[
u_{ss}+u_{tt}+\frac{3}{s}u_{s}+\frac{3}{t}u_{t}+u-u^{3}=0,
\]
and the fact that in $\Omega,$
\[
\frac{3}{s}u_{s}+\frac{3}{t}u_{t}\leq0,\text{ }u_{tt}\leq0.
\]

Here we give another proof using the maximum principle. We have
\[
\Delta u^{\sigma}=\sigma u^{\sigma-1}\Delta u+\sigma\left(  \sigma-1\right)
u^{\sigma-2}\left\vert \nabla u\right\vert ^{2},
\]
which implies
\begin{align*}
Lu^{\sigma}  &  =\sigma u^{\sigma-1}\left(  u^{3}-u\right)  +\left(
1-3u^{2}\right)  u^{\sigma}+\sigma\left(  \sigma-1\right)  u^{\sigma
-2}\left\vert \nabla u\right\vert ^{2}\\
&  =\left(  1-\sigma\right)  u^{\sigma}+\left(  \sigma-3\right)  u^{\sigma
+2}+\sigma\left(  \sigma-1\right)  u^{\sigma-2}\left\vert \nabla u\right\vert
^{2}.
\end{align*}
In particular,
\[
Lu=-2u^{3},Lu^{3}=-2u^{3}+6u\left\vert \nabla u\right\vert ^{2}.
\]
Hence
\begin{equation}
L\left(  u-u^{3}\right)  =-6u\left\vert \nabla u\right\vert ^{2}. \label{u-}%
\end{equation}

Define $\eta=u-u^{3}+u_{ss}.$ Applying $\left(  \ref{u-}\right)  $ and
$\left(  \ref{uss}\right)  ,$ we get
\[
L\eta=-6u\left\vert \nabla u\right\vert ^{2}+\frac{6}{s^{2}}u_{ss}-\frac
{6}{s^{3}}u_{s}+6u_{s}^{2}u.
\]
Therefore, in $\Omega^{\ast},$
\[
L\eta-\frac{6}{s^{2}}\eta=-\frac{6}{s^{2}}\left(  u-u^{3}\right)  -\frac
{6}{s^{3}}u_{s}-6u_{t}^{2}u\leq0.
\]
By the maximum principle, $\eta\geq0.$ This finishes the proof.
\end{proof}

It turns out that the estimate of Lemma \ref{u-u} is also not optimal. Indeed,
$u_{ss}$ can be estimated by $u_{s}.$ This is the following

\begin{lemma}
\label{uus}$u$ satisfies
\[
\sqrt{2}u_{s}u+u_{ss}\geq0,\text{ in }\Omega.
\]

\end{lemma}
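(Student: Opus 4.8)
The plan is to mimic the strategy already used repeatedly in this section: set $\eta := \sqrt{2}\,u_s u + u_{ss}$, show that it is nonnegative on $\partial\Omega$, derive an equation of the form $L\eta - c\,\eta \le 0$ in $\Omega^{\ast}$ with $c \ge 0$, and invoke the maximum principle (Theorem \ref{maximum}). The nonnegativity on $\partial\Omega$ should come from the antisymmetry $U(s,t) = -U(t,s)$ and the boundary behavior of the derivatives of $u$: on $\{s = t\}$ one has $u = 0$, so $\eta$ reduces to $u_{ss}$ there, and one expects $u_{ss} \ge 0$ on the diagonal (this is consistent with the remark in the excerpt that $u_{ss}$ is positive near the origin and the $y$-axis); on $\{t = 0\}$, the regularity of the $8$-dimensional solution forces $u_t = 0$ and $u_{ss} = u - u^3 - \tfrac{3}{s}u_s$ there, and combined with the already-established $u_s u + u_{ss} \ge 0$ of \eqref{d} one gets the $\sqrt{2}$-weighted version since $\sqrt{2} \ge 1$ and $u_s u \ge 0$.

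The computational heart is to expand $L\eta$. Using $Lu = -2u^3$, the identity $L u_s = \tfrac{3}{s^2}u_s$ from \eqref{usut}, and the formula for $\Delta$ of a product, one gets
\begin{align*}
L(u_s u) &= u\,Lu_s + u_s\,Lu + 2\nabla u_s\cdot\nabla u + (3u^2-1)u_s u \\
&= \frac{3}{s^2}u_s u - 2u^3 u_s + 2\nabla u_s\cdot\nabla u + (3u^2-1)u_s u,
\end{align*}
and, differentiating \eqref{usut} once more (this is the quantity referred to as \eqref{uss} in the previous lemma's proof), an expression for $Lu_{ss}$ containing the terms $\tfrac{6}{s^2}u_{ss} - \tfrac{6}{s^3}u_s + 6u u_s^2$ together with lower-order pieces. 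Adding $\sqrt{2}$ times the first to the second and then subtracting $\tfrac{c}{s^2}\eta$ for a suitable constant $c$ (one should aim for $c = 6$, by analogy with Lemma \ref{u-u}), the goal is to cancel the "bad" positive terms against $-\tfrac{c}{s^2}\eta$ and to control the cross term $\nabla u_s\cdot\nabla u = u_{ss}u_s + u_{st}u_t$, using $u_s > 0$, $u_{st} \ge 0$, $u_t < 0$, $u_{tt} \le 0$, $u_{ss} - \tfrac{u_s}{s} \le 0$ (Lemma \ref{us/s}), and the Modica bound \eqref{modica}. The special structure $\sqrt{2}H' = 1 - H^2$ of the heteroclinic profile — flagged in the introduction as being essential — is what makes the constant $\sqrt{2}$ (rather than $1$) work here, presumably through a one-dimensional comparison $\sqrt{2}u_s u + u_{ss} \approx \sqrt{2}H'H + H''= \sqrt{2}H'H - (H - H^3) = 0$ along the normal to the Simons cone.

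I expect the main obstacle to be precisely the sign control of the remainder after forming $L\eta - \tfrac{6}{s^2}\eta$: the cross term $2u_{st}u_t$ is negative (good), but $2u_{ss}u_s$ has indefinite sign because $u_{ss}$ changes sign in $\Omega$, and the curvature terms $\tfrac{3}{s}u_s + \tfrac{3}{t}u_t$ contribute with signs that must be balanced against the $u - u^3$ terms. The resolution should be to rewrite $u_{ss}$ using the equation \eqref{AC1} wherever it appears undifferentiated, so that the only second derivatives left are $u_{st}$ and $u_{tt}$, both of known sign, and then to absorb what remains into the zeroth-order coefficient; one may also need to restrict first to $\Omega^{\ast}$ (as in Lemmas \ref{us/s} and \ref{u-u}) so that $u_t$ has a definite sign up to the $t$-axis, and then extend to $\Omega$ by the antisymmetry. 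A secondary subtlety is justifying the maximum principle on the unbounded domain $\Omega^{\ast}$: one needs the decay of $\eta$ at infinity, which follows from Lemma \ref{decayofus} (exponential decay of $u_s$ away from the cone) together with the Schauder estimates controlling $u_{ss}$ by $u_s$ and the nonlinearity.
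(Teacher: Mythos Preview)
Your outline has the right shape---set $\eta=\sqrt{2}\,u_su+u_{ss}$, show $L\eta-c\,\eta\le 0$ for some $c\ge 0$, apply Theorem~\ref{maximum}---but the choice of potential is where the argument lives, and taking $c=\tfrac{6}{s^2}$ does not close. With that choice the remainder still contains the indefinite term $2\sqrt{2}\,u_{ss}u_s$, and your plan to eliminate $u_{ss}$ by substituting $u_{ss}=-u_{tt}-\tfrac{3}{s}u_s-\tfrac{3}{t}u_t+u^3-u$ trades it for the nonnegative quantities $-2\sqrt{2}\,u_su_{tt}$ and $-\tfrac{6\sqrt{2}}{t}u_su_t$ (recall $u_{tt}\le 0$, $u_t\le 0$ in $\Omega$). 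No estimate available at this stage controls these; even if one imports the later bound $-u_{tt}\le -\tfrac{u_t}{t}+u_{st}$ of Lemma~\ref{ut/t}, one is left with $2\sqrt{2}\,u_{st}(u_s+u_t)\ge 0$ and $-\tfrac{8\sqrt{2}}{t}u_su_t\ge 0$, which the surviving negative terms do not dominate.

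The paper's fix is to allow a \emph{solution-dependent} potential: subtract $\bigl(2\sqrt{2}\,u_s+\tfrac{6}{s^2}\bigr)\eta$ instead. Since $u_{ss}=\eta-\sqrt{2}\,u_su$, the extra piece $2\sqrt{2}\,u_s\cdot\eta$ absorbs $2\sqrt{2}\,u_{ss}u_s$ exactly, at the cost of a new term $-4u_s^2u$. One is left with
\[
L\eta-\Bigl(2\sqrt{2}\,u_s+\tfrac{6}{s^2}\Bigr)\eta
=-\tfrac{3\sqrt{2}}{s^2}u_su+2\sqrt{2}\,u_{st}u_t-\tfrac{6}{s^3}u_s+(6-2a^2)u_s^2u+au_s(u^3-u)\Big|_{a=\sqrt{2}},
\]
and Modica in the form $1-u^2\ge\sqrt{2}\,u_s$ gives $au_s(u^3-u)\le-\sqrt{2}a\,u_s^2u$, so the last two terms combine to $(6-2a^2-\sqrt{2}a)u_s^2u$, which vanishes precisely at $a=\sqrt{2}$. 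That algebraic root is the real source of the constant; your heuristic $\sqrt{2}H'H+H''=0$ is a correct sanity check on the cone but not the mechanism of the proof. A minor side issue: your boundary argument on $\{t=0\}$ invokes \eqref{d}, i.e.\ $u_su+u_{ss}\ge 0$, which is Lemma~\ref{lemma22} and is established only \emph{after} the present lemma. In any case $\{t=0\}$ is interior to $\mathbb{R}^8$; the only genuine boundary is the cone $\{s=t\}$, where $u=0$ and $\eta=u_{ss}=-u_{tt}\ge 0$ by the antisymmetry.
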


\begin{proof}
We compute
\[
L\left(  u_{s}u\right)  =\frac{3}{s^{2}}u_{s}u+2u_{ss}u_{s}+2u_{st}u_{t}%
+u_{s}\left(  u^{3}-u\right)  .
\]
Let $\eta=au_{s}u+u_{ss},$ where $a>0$ is a constant to be chosen later on. We
have
\begin{align*}
L\eta &  =\frac{3a}{s^{2}}u_{s}u+2au_{ss}u_{s}+2au_{st}u_{t}+au_{s}\left(
u^{3}-u\right) \\
&  +\frac{6}{s^{2}}u_{ss}-\frac{6}{s^{3}}u_{s}+6u_{s}^{2}u.
\end{align*}
We can write it in the form
\begin{align*}
L\eta &  =\left(  2au_{s}+\frac{6}{s^{2}}\right)  \eta+\frac{3a}{s^{2}}%
u_{s}u+2au_{st}u_{t}+au_{s}\left(  u^{3}-u\right) \\
&  -\frac{6}{s^{3}}u_{s}+6u_{s}^{2}u-\left(  2au_{s}+\frac{6}{s^{2}}\right)
\left(  au_{s}u\right)  .
\end{align*}
That is,
\begin{align*}
L\eta-\left(  2au_{s}+\frac{6}{s^{2}}\right)  \eta &  =-\frac{3a}{s^{2}}%
u_{s}u+2au_{st}u_{t}-\frac{6}{s^{3}}u_{s}\\
&  +\left(  6-2a^{2}\right)  u_{s}^{2}u+au_{s}\left(  u^{3}-u\right)  .
\end{align*}
By Modica estimate, $1-u^{2}\geq\sqrt{2}\left\vert \nabla u\right\vert
\geq\sqrt{2}u_{s}.$ Hence
\begin{align*}
L\eta-\left(  2au_{s}+\frac{6}{s^{2}}\right)  \eta &  \leq\left(
6-2a^{2}\right)  u_{s}^{2}u-\sqrt{2}au_{s}^{2}u\\
&  =\left(  6-2a^{2}-\sqrt{2}a\right)  u_{s}^{2}u.
\end{align*}
In particular, if we choose $a$ to be $\sqrt{2},$ then $L\eta-\left(
2au_{s}+\frac{6}{s^{2}}\right)  \eta\leq0.$ In this case, by the maximum
principle, $\eta\geq0.$ Hence
\[
\sqrt{2}u_{s}u+u_{ss}\geq0.
\]

\end{proof}

We remark that this estimate together with Modica estimate implies $\left(
\ref{u-u3}\right)  .$

\begin{lemma}
\label{utust}We have%
\[
\sqrt{2}u_{t}u+u_{st}\leq0\text{ in }\Omega.
\]

\end{lemma}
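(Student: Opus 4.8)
The plan is to follow the scheme of Lemma~\ref{uus}: out of the left-hand side of the desired inequality build a function $\eta$ that vanishes on $\partial\Omega$, show that $\eta$ satisfies $(L-c)\eta\ge 0$ in $\Omega$ for some $c\ge 0$, and conclude $\eta\le 0$ from the maximum principle of Theorem~\ref{maximum}. Concretely, set
\[
\eta:=\sqrt{2}\,u_t u+u_{st},\qquad c:=2\sqrt{2}\,u_s+\frac{3}{s^{2}}+\frac{3}{t^{2}},
\]
so that $c\ge 0$ in $\Omega$ because $u_s>0$ there. The boundary data are right: on the Simons cone $\left\{s=t\right\}$ one has $u=0$ and, by the antisymmetry $U(s,t)=-U(t,s)$, also $u_{st}=0$; on the axis $\left\{t=0\right\}$ one has $u_t=0$ and $u_{st}\to 0$ by the smoothness of $U$ on $\mathbb{R}^{8}$ (write $u=\phi(s,t^{2})$ with $\phi$ smooth). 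Hence $\eta=0$ on $\partial\Omega$, and $\eta\to 0$ at infinity since $u\to\pm1$ with exponentially decaying derivatives away from the cone. What remains is the differential inequality.

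The computational input consists of two identities. Using $\Delta u=u^{3}-u$ and $Lu_{t}=\frac{3}{t^{2}}u_{t}$ from $\left(\ref{usut}\right)$, one gets
\[
L(u_{t}u)=2u_{s}u_{st}+2u_{t}u_{tt}+\frac{3}{t^{2}}u_{t}u+u_{t}(u^{3}-u);
\]
and differentiating $\left(\ref{usut}\right)$ with respect to $s$ gives
\[
Lu_{st}=\left(\frac{3}{s^{2}}+\frac{3}{t^{2}}\right)u_{st}+6u\,u_{s}u_{t}.
\]
Forming $L\eta=\sqrt{2}\,L(u_{t}u)+Lu_{st}$ and subtracting $c\eta$, the terms linear in $u_{st}$ as well as the term $\frac{3}{t^{2}}u_{t}u$ cancel, and after factoring out $u_{t}$ one arrives at
\[
L\eta-c\eta=u_{t}\left[\sqrt{2}(u^{3}-u)+2\sqrt{2}\,u_{tt}+2uu_{s}-\frac{3\sqrt{2}}{s^{2}}u\right].
\]

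Now comes the sign analysis, which is the crux and is short. In $\Omega$, $u\in(0,1)$ so $\sqrt{2}(u^{3}-u)\le 0$; $u_{tt}\le 0$ so $2\sqrt{2}\,u_{tt}\le 0$; and $-\frac{3\sqrt{2}}{s^{2}}u<0$. The only term with the wrong sign is $2uu_{s}>0$, but Modica's inequality $\left(\ref{modica}\right)$ gives $\sqrt{2}\,u_{s}\le\sqrt{2}\left\vert\nabla u\right\vert\le 1-u^{2}$, hence
\[
2uu_{s}=\sqrt{2}\,u\cdot\sqrt{2}\,u_{s}\le\sqrt{2}\,u(1-u^{2})=-\sqrt{2}(u^{3}-u),
\]
so $\sqrt{2}(u^{3}-u)+2uu_{s}\le 0$ and the whole bracket is $\le 0$. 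Since $u_{t}<0$ in $\Omega$, this gives $L\eta-c\eta\ge 0$, and Theorem~\ref{maximum} yields $\eta\le 0$ in $\Omega$, which is the assertion.

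I expect the only genuinely delicate point is, as always with these estimates, fixing the correct pairing: since $u_{t}<0$ on all of $\Omega$, every sign is reversed relative to the $u_{ss}$ argument of Lemma~\ref{uus}, so $\eta$ here must be a subsolution rather than a supersolution, and the coefficient of $u_{t}u$ must be chosen to be exactly $\sqrt{2}$ so that Modica's bound absorbs $2uu_{s}$ with no room to spare. A secondary point deserving care is the boundary behavior of $u_{st}$ near the corner $\left\{t=0\right\}$, which is handled via the even expansion $u=\phi(s,t^{2})$.
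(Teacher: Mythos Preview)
Your proof is correct and follows the same approach as the paper: set $\eta=\sqrt{2}\,u_t u+u_{st}$, compute $L\eta-\bigl(2\sqrt{2}\,u_s+\tfrac{3}{s^2}+\tfrac{3}{t^2}\bigr)\eta$, and apply the maximum principle. Your treatment is in fact more careful than the paper's: the paper only cites $u_{tt}\le 0$ and $u_t\le 0$ to conclude $L\eta-c\eta\ge 0$, but the remaining term $2u_s u_t u$ has the wrong sign and must be absorbed by $\sqrt{2}\,u_t(u^3-u)$ via Modica's inequality, exactly as you do.
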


\begin{proof}
Let $\eta=\sqrt{2}u_{t}u+u_{st}.$ Then
\begin{align*}
L\eta &  =\frac{3\sqrt{2}}{t^{2}}u_{t}u+2\sqrt{2}u_{st}u_{s}+2\sqrt{2}%
u_{tt}u_{t}+\sqrt{2}u_{t}\left(  u^{3}-u\right) \\
&  +\left(  \frac{3}{t^{2}}+\frac{3}{s^{2}}\right)  u_{st}+6u_{s}u_{t}u.
\end{align*}
This can be written as
\begin{align*}
&  L\eta-\left(  2\sqrt{2}u_{s}+\frac{3}{t^{2}}+\frac{3}{s^{2}}\right)  \eta\\
&  =\frac{3\sqrt{2}}{t^{2}}u_{t}u+2\sqrt{2}u_{tt}u_{t}+\sqrt{2}u_{t}\left(
u^{3}-u\right) \\
&  +6u_{s}u_{t}u-\sqrt{2}u_{t}u\left(  2\sqrt{2}u_{s}+\frac{3}{t^{2}}+\frac
{3}{s^{2}}\right)  .
\end{align*}
Since $u_{tt}\leq0$ and $u_{t}\leq0$ in $\Omega,$ we get
\[
L\eta-\left(  2\sqrt{2}u_{s}+\frac{3}{t^{2}}+\frac{3}{s^{2}}\right)  \eta
\geq0,\text{ in }\Omega.
\]
By the maximum principle, $\sqrt{2}u_{t}u+u_{st}\leq0.$
\end{proof}

Next we prove the following non-optimal estimate:

\begin{lemma}
In $\Omega,$ we have%
\[
2\left(  u_{s}+u_{t}\right)  +u_{st}+u_{ss}\geq0.
\]

\end{lemma}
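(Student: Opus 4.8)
The plan is to prove the estimate $2(u_s+u_t)+u_{st}+u_{ss}\geq 0$ in $\Omega$ by the same maximum-principle machinery used throughout this section: exhibit the left-hand side as a function $\eta$ vanishing (or being nonnegative) on $\partial\Omega$, show that $L\eta-c\eta\leq 0$ for some $c\geq 0$ in $\Omega$, and then invoke Theorem~\ref{maximum}. First I would record the equations for the building blocks. We already have from $(\ref{usut})$ that $Lu_s=\tfrac{3}{s^2}u_s$ and $Lu_t=\tfrac{3}{t^2}u_t$; differentiating $(\ref{AC1})$ in $s$ once more gives the equation for $u_{ss}$ (used as $(\ref{uss})$ above), namely $Lu_{ss}=\tfrac{6}{s^2}u_{ss}-\tfrac{6}{s^3}u_s+6uu_s^2$, and differentiating in $s$ and $t$ gives $Lu_{st}=\left(\tfrac{3}{s^2}+\tfrac{3}{t^2}\right)u_{st}+6uu_su_t$. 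Summing these with the coefficient $2$ in front of $u_s+u_t$ produces an explicit expression for $L\eta$ whose ``bad'' terms are exactly the negative ones coming from the right-hand sides (the $-\tfrac{6}{s^3}u_s$ term and the quadratic gradient terms $6uu_s^2+6uu_su_t$, the latter being sign-indefinite since $u_t<0$).

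Next I would absorb the dominant positive multiples of $u_{ss}$ and $u_{st}$ back into $\eta$: write $L\eta-\left(\tfrac{6}{s^2}+\tfrac{3}{t^2}\right)\eta$ (this is the natural ``$c$'' since $u_{ss}$ carries the coefficient $\tfrac{6}{s^2}$ and $u_{st}$ carries $\tfrac{3}{s^2}+\tfrac{3}{t^2}\geq \tfrac{3}{t^2}$), and keep track of the leftover. The leftover will consist of: a multiple of $(u_s+u_t)$ with a coefficient that must be controlled (here the monotonicity $u_s+u_t\geq 0$ plus the decay $(\ref{decay})$, i.e. $u_s+u_t\leq \tfrac{z}{y+z}u_s$, should help); the term $-\tfrac{6}{s^3}u_s$, which is negative and hence harmless; and the combination $6u(u_s^2+u_su_t)=6uu_s(u_s+u_t)$, which is nonnegative because $u>0$, $u_s>0$ and $u_s+u_t\geq 0$ in $\Omega$ — so this piece has the \emph{wrong} sign and is the crux of the argument. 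To kill it I expect to invoke the already-proven pointwise estimates: $\sqrt2\,u_su+u_{ss}\geq 0$ from Lemma~\ref{uus} and $\sqrt2\,u_tu+u_{st}\leq 0$ from Lemma~\ref{utust}, together with the Modica bound $1-u^2\geq\sqrt2\,u_s$. Indeed, since on the region where $\eta$ would be negative we may use $u_{ss}\geq -\eta + (\text{remaining linear terms})$, the sign-violating quadratic can be traded against the linear $u_{ss}$ and $u_{st}$ contributions exactly as in the proof of Lemma~\ref{uus}, where the coefficient $6-2a^2-\sqrt2 a$ became nonpositive for $a=\sqrt2$; the constant $2$ in front of $u_s+u_t$ is chosen generously so that the analogous numerical inequality closes.

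Concretely, the key steps in order are: (1) derive $L\eta$ for $\eta:=2(u_s+u_t)+u_{ss}+u_{st}$ using $(\ref{usut})$, $(\ref{uss})$, and the $u_{st}$-equation; (2) rewrite it as $L\eta-\left(\tfrac{6}{s^2}+\tfrac{3}{t^2}\right)\eta=(\text{error})$ by adding and subtracting $\left(\tfrac{6}{s^2}+\tfrac{3}{t^2}\right)\eta$; (3) bound the error from above, treating the $u_s$, $u_t$, $u_{ss}$, $u_{st}$ terms with Lemmas~\ref{uus} and \ref{utust} and the Modica estimate $(\ref{modica})$, and the sign-indefinite $(u_s+u_t)$ terms with the monotonicity $u_s+u_t\geq 0$ and the decay $(\ref{decay})$, to conclude $L\eta-\left(\tfrac{6}{s^2}+\tfrac{3}{t^2}\right)\eta\leq 0$ wherever $\eta<0$; (4) check the boundary condition — on $\partial\Omega^{\ast}$ (the $s$-axis and the diagonal $s=t$) one has $u_s+u_t\geq 0$ and, by the boundary relations used in Lemma~\ref{us/s} such as $u_{ss}=\tfrac{u_s}{s}$ on the $s$-axis and antisymmetry on the diagonal, $\eta\geq 0$ on $\partial\Omega$; (5) apply Theorem~\ref{maximum} to obtain $\eta\geq 0$ in $\Omega$. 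The main obstacle will be step~(3): making the numerical juggling of the constants close, in particular verifying that after substituting the bounds from Lemmas~\ref{uus} and \ref{utust} the residual coefficient of $u\,u_s^2$ (and of $u\,u_su_t$) is nonpositive, which is why the multiplier $2$ — rather than $1$ — is taken in front of $u_s+u_t$; a secondary subtlety is that $u_{ss}$ changes sign in $\Omega$, so one cannot simply drop it, and the absorption into the zeroth-order coefficient $\tfrac{6}{s^2}+\tfrac{3}{t^2}$ must be done carefully to keep that coefficient nonnegative as required by the maximum principle.
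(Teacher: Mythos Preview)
Your overall strategy --- set $\eta=2(u_s+u_t)+u_{ss}+u_{st}$, compute $L\eta$, subtract a nonnegative potential, and apply Theorem~\ref{maximum} --- is exactly the paper's. Two concrete choices, however, diverge from the paper and make your plan difficult to close.

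First, the subtraction constant. The paper takes $c=\tfrac{6}{s^2}$, not $\tfrac{6}{s^2}+\tfrac{3}{t^2}$. With $c=\tfrac{6}{s^2}$ the $u_{ss}$ term disappears completely from the error, and one obtains the clean identity
\[
L\eta-\frac{6}{s^2}\eta=-\frac{6}{s^2}(u_s+u_t)+\Bigl(\frac{3}{t^2}-\frac{3}{s^2}\Bigr)(2u_t+u_{st})-\frac{6}{s^3}u_s+6uu_s(u_s+u_t).
\]
Your choice leaves a residual $-\tfrac{3}{t^2}u_{ss}$, which you then propose to kill with Lemma~\ref{uus}; this can be done, but it costs you the term $-\tfrac{6}{t^2}u_s$ that the paper does \emph{not} spend, and you lose a margin you will need later. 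The paper's grouping is more efficient: Lemma~\ref{utust} gives $2u_t+u_{st}\le(2-\sqrt{2}u)u_t<0$, so the entire $\bigl(\tfrac{3}{t^2}-\tfrac{3}{s^2}\bigr)(2u_t+u_{st})$ block is a large negative term available to absorb the quadratic piece.

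Second, and this is the real gap, you plan to control the positive term $6uu_s(u_s+u_t)$ using only the elementary decay~$(\ref{decay})$, i.e.\ $u_s+u_t\le\tfrac{s-t}{2s}u_s$. The paper instead invokes Proposition~\ref{us+ut},
\[
u_s+u_t\le\Bigl(\frac{1}{t^2}-\frac{1}{s^2}\Bigr)\Bigl(2(u_s-u_t)+\sqrt{u_s-u_t}\Bigr),
\]
which carries the same $\bigl(\tfrac{1}{t^2}-\tfrac{1}{s^2}\bigr)$ prefactor as the good term $(2-\sqrt{2}u)\bigl(\tfrac{3}{t^2}-\tfrac{3}{s^2}\bigr)u_t$. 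This matching of prefactors is what makes the cancellation work pointwise. The estimate~$(\ref{decay})$ gives only an $O\bigl(\tfrac{s-t}{s}\bigr)$ factor, which does not match the $O\bigl(\tfrac{s^2-t^2}{s^2t^2}\bigr)$ scale of the compensating negative term; in the intermediate region (moderate $s,t$, $u$ of order one) your inequality will not close. Lemmas~\ref{uus} and the Modica bound, which you list as your main tools, do not provide the needed decay of $u_s+u_t$; Proposition~\ref{us+ut} is essential here and should replace~$(\ref{decay})$ in step~(3).
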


\begin{proof}
Let $\eta=2\left(  u_{s}+u_{t}\right)  +u_{st}+u_{ss}.$ First of all,
$\eta\geq0$ on $\partial\Omega.$ We compute%
\begin{align*}
L\eta &  =\frac{6}{s^{2}}u_{s}+\frac{6}{t^{2}}u_{t}+\left(  \frac{3}{s^{2}%
}+\frac{3}{t^{2}}\right)  u_{st}+6u_{s}u_{t}u\\
&  +\frac{6}{s^{2}}u_{ss}-\frac{6}{s^{3}}u_{s}+6u_{s}^{2}u
\end{align*}%
\begin{align*}
L\eta-\frac{6}{s^{2}}\eta &  =-\frac{6}{s^{2}}\left(  u_{s}+u_{t}\right)
+\left(  \frac{3}{t^{2}}-\frac{3}{s^{2}}\right)  \left(  2u_{t}+u_{st}\right)
-\frac{6}{s^{3}}u_{s}\\
&  +6u_{s}u\left(  u_{s}+u_{t}\right)  .
\end{align*}
Using Lemma \ref{utust}, we obtain
\[
2u_{t}+u_{st}\leq\left(  2-\sqrt{2}u\right)  u_{t}.
\]
Hence
\begin{align*}
L\eta-\frac{6}{s^{2}}\eta &  \leq\left(  6u_{s}u-\frac{6}{s^{2}}\right)
\left(  u_{s}+u_{t}\right)  -\frac{6}{s^{3}}u_{s}\\
&  +\left(  2-\sqrt{2}u\right)  \left(  \frac{3}{t^{2}}-\frac{3}{s^{2}%
}\right)  u_{t}.
\end{align*}
Applying the estimate of $u_{s}+u_{t}$(Proposition \ref{us+ut}), we see that
$L\eta-\frac{6}{s^{2}}\eta\leq0.$ It then follows from the maximum principle
that $\eta\geq0.$
\end{proof}

\begin{lemma}
In $\Omega,$ we have%
\[
2\left(  u_{s}+u_{t}\right)  -u_{st}-u_{tt}\geq0.
\]

\end{lemma}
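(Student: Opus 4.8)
The plan is to mimic the structure of the previous lemma (the one establishing $2(u_s+u_t)+u_{st}+u_{ss}\ge0$), since the two statements are dual under the $(s,t)\leftrightarrow$ role-swap together with sign flips. Set $\eta:=2(u_s+u_t)-u_{st}-u_{tt}$. On $\partial\Omega$ (i.e.\ on the diagonal $s=t$ and on the $s$-axis $t=0$) one checks $\eta\ge0$: on $s=t$ we use $U(s,t)=-U(t,s)$, which forces $u_s+u_t=0$ and makes $u_{st}+u_{tt}$ vanish there as well; on $t=0$ the relevant boundary identities from \eqref{uyz} / the symmetry give the needed sign. So the boundary hypothesis of Theorem \ref{maximum} will be in place.

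Next I would compute $L\eta$. Using \eqref{usut}, namely $Lu_s=\frac{3}{s^2}u_s$ and $Lu_t=\frac{3}{t^2}u_t$, and the commutator formulas for $Lu_{st}$ and $Lu_{tt}$ obtained by differentiating \eqref{AC1} twice (these are exactly the relations $(\ref{uss})$-type identities used in the previous lemmas, e.g.\ $Lu_{tt}=\frac{6}{t^2}u_{tt}-\frac{6}{t^3}u_t+6u_t^2u$ and $Lu_{st}=\left(\frac{3}{s^2}+\frac{3}{t^2}\right)u_{st}+6u_su_tu$), I expect
\[
L\eta-\frac{6}{t^2}\eta = -\frac{6}{t^2}(u_s+u_t)+\left(\frac{3}{s^2}-\frac{3}{t^2}\right)(2u_s+u_{st})+\frac{6}{t^3}u_t-6u_tu(u_s+u_t),
\]
or a close variant. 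The point of the subtraction of $\frac{6}{t^2}\eta$ is to cancel the bad $u_{tt}$ terms so that only first derivatives and the cross term $u_{st}$ survive, with a zeroth-order coefficient that is manifestly $\le0$ in $\Omega$ (so Theorem \ref{maximum} applies).

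The last step is to show the right-hand side above is $\le0$ in $\Omega$. Here $\frac{6}{t^3}u_t\le0$ since $u_t<0$, and $-\frac{6}{t^2}(u_s+u_t)\le0$ by monotonicity $u_s+u_t\ge0$; the term $-6u_tu(u_s+u_t)$ is nonnegative in $\Omega$ (since $u_t<0$, $u>0$), so it must be absorbed by the negative terms — this is where the decay estimate \eqref{decay}, $u_s+u_t\le\frac{z}{y+z}u_s$, and the gradient bound from the Modica estimate \eqref{modica} enter, exactly as in the preceding two lemmas. For the factor $\left(\frac{3}{s^2}-\frac{3}{t^2}\right)(2u_s+u_{st})$, note $\frac{1}{s^2}-\frac{1}{t^2}\le0$ in $\Omega$, so I need $2u_s+u_{st}\ge0$; but Lemma \ref{utust} gives $u_{st}\le-\sqrt2 u_tu$, which controls $u_{st}$ from above, not below — so instead I would bound $2u_s+u_{st}$ from below using $u_{st}\ge -\frac12(u_{ss}+u_{tt})$-type heuristics made rigorous via the already-proven second-derivative estimates (Lemmas \ref{us/s}, \ref{uus}, and $u-u^3+u_{ss}\ge0$). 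The main obstacle will be precisely this: getting a clean, sign-definite lower bound for the combination $2u_s+u_{st}$ (equivalently, bounding $u_{st}$ from below near the $s$-axis where $\frac1{s^2}-\frac1{t^2}$ is most negative), and then checking that Proposition \ref{us+ut}'s estimate on $u_s+u_t$ is strong enough to close the remaining inequality. Once all terms are collected the conclusion $\eta\ge0$ in $\Omega$ follows from the maximum principle, Theorem \ref{maximum}.
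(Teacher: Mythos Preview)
Your overall strategy (define $\eta$, subtract $\tfrac{6}{t^{2}}\eta$, apply Theorem~\ref{maximum}) matches the paper, but your computed expression for $L\eta-\tfrac{6}{t^{2}}\eta$ has a sign error that derails the rest of the argument. A direct calculation gives
\[
L\eta-\frac{6}{t^{2}}\eta
=\Bigl(\frac{6}{s^{2}}-\frac{12}{t^{2}}\Bigr)u_{s}-\frac{6}{t^{2}}u_{t}
+\Bigl(\frac{3}{t^{2}}-\frac{3}{s^{2}}\Bigr)u_{st}
+\frac{6}{t^{3}}u_{t}-6u_{t}u\,(u_{s}+u_{t}),
\]
so the coefficient of $u_{st}$ is $\frac{3}{t^{2}}-\frac{3}{s^{2}}>0$ in $\Omega$, not $\frac{3}{s^{2}}-\frac{3}{t^{2}}$ as you wrote. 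Since $u_{st}>0$, this term is \emph{positive}, and your proposed grouping $\bigl(\frac{3}{s^{2}}-\frac{3}{t^{2}}\bigr)(2u_{s}+u_{st})$ is simply not what appears; trying to pair $u_{st}$ with $u_{s}$ cannot produce a nonpositive combination here. (Incidentally, had your formula been right, $2u_{s}+u_{st}\ge0$ is immediate from $u_{s}>0$, $u_{st}>0$, so the elaborate lower-bound discussion would have been unnecessary.)

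The paper's key move is to pair $u_{st}$ with $u_{t}$ instead: rewrite the first three terms as
\[
\Bigl(\frac{6}{s^{2}}-\frac{12}{t^{2}}\Bigr)(u_{s}+u_{t})
+\Bigl(\frac{3}{t^{2}}-\frac{3}{s^{2}}\Bigr)(2u_{t}+u_{st}),
\]
and then invoke Lemma~\ref{utust} in the form $2u_{t}+u_{st}\le(2-\sqrt{2}\,u)\,u_{t}\le0$, which makes the second summand nonpositive. The remaining positive contribution $-6u_{t}u\,(u_{s}+u_{t})$ is then absorbed by the large negative coefficient $\frac{6}{s^{2}}-\frac{12}{t^{2}}$ together with $\frac{6}{t^{3}}u_{t}$ and the extra negative piece $\bigl(\frac{3}{t^{2}}-\frac{3}{s^{2}}\bigr)(2-\sqrt{2}u)u_{t}$; this is where the decay estimate on $u_{s}+u_{t}$ (Proposition~\ref{us+ut}) is implicitly used, as in the companion lemma. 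For the boundary, the paper's argument is simpler than yours: $u_{tt}\le0$ in $\Omega$ already forces $\eta\ge0$ on $\partial\Omega$.
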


\begin{proof}
Let $\eta=2\left(  u_{s}+u_{t}\right)  -u_{st}-u_{tt}.$ First of all, since
$u_{tt}\leq0$ in $\Omega,$ we have $\eta\geq0$ on $\partial\Omega.$ We compute%
\begin{align*}
L\eta &  =\frac{6u_{s}}{s^{2}}+\frac{6u_{t}}{t^{2}}-\left(  \frac{3}{s^{2}%
}+\frac{3}{t^{2}}\right)  u_{st}-6u_{s}u_{t}u\\
&  -\frac{6}{t^{2}}u_{tt}+\frac{6}{t^{3}}u_{t}-6u_{t}^{2}u.
\end{align*}
Therefore,
\begin{align*}
L\eta-\frac{6}{t^{2}}\eta &  =\left(  \frac{6}{s^{2}}-\frac{12}{t^{2}}\right)
u_{s}-\frac{6u_{t}}{t^{2}}-\left(  \frac{3}{s^{2}}-\frac{3}{t^{2}}\right)
u_{st}+\frac{6}{t^{3}}u_{t}\\
&  -6u_{t}u\left(  u_{s}+u_{t}\right) \\
&  =\left(  \frac{6}{s^{2}}-\frac{12}{t^{2}}\right)  \left(  u_{s}%
+u_{t}\right)  +\left(  \frac{3}{t^{2}}-\frac{3}{s^{2}}\right)  \left(
2u_{t}+u_{st}\right) \\
&  +\frac{6}{t^{3}}u_{t}-6u_{t}u\left(  u_{s}+u_{t}\right) \\
&  \leq\left(  \frac{6}{s^{2}}-\frac{12}{t^{2}}-6u_{t}u\right)  \left(
u_{s}+u_{t}\right) \\
&  +\frac{6}{t^{3}}u_{t}+\left(  \frac{3}{t^{2}}-\frac{3}{s^{2}}\right)
\left(  2-\sqrt{2}u\right)  u_{t}\\
&  \leq0.
\end{align*}
It then follows from the maximum principle that $\eta\geq0.$
\end{proof}

It follows immediately from these lemmas that
\[
4\left(  u_{s}+u_{t}\right)  +u_{ss}-u_{tt}\geq0.
\]
We conjecture that
\begin{align*}
u\left(  u_{s}+u_{t}\right)  +u_{ss}+u_{st}  &  \geq0,\\
u\left(  u_{s}+u_{t}\right)  -u_{st}-u_{tt}  &  \geq0.
\end{align*}
However, we are not able to prove them in this paper. The main difficulty here
is the following: In $L\left(  u\left(  u_{s}+u_{t}\right)  \right)  ,$ we
have $u_{tt}$ term. Hence one needs to handle terms like $u_{st}+u_{tt}$(In
particular in the region $t<1$). The lower bound of this term is not easy to
derive, because $L\left(  u_{tt}\right)  =\frac{6}{t^{2}}u_{tt}-\frac{6}%
{t^{3}}u_{t}+6u_{t}^{2}u$ and $\frac{u_{t}}{t^{3}}$ blows up as $t\rightarrow
0.$

\begin{lemma}
\label{usus}%
\[
\frac{u}{y}+\frac{u}{z}-u_{y}-u_{z}\geq0,\text{ in }\Omega^{\ast}.
\]

\end{lemma}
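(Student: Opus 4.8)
Pass to Cabr\'e's variables $y=(s+t)/\sqrt{2}$, $z=(s-t)/\sqrt{2}$, in which
$\Delta=\partial_y^{2}+\partial_z^{2}+\frac{6}{y^{2}-z^{2}}\bigl(y\partial_y-z\partial_z\bigr)$ and
$\Omega^{\ast}$ becomes the quadrant $\{y>0,\ z>0\}$, with $\partial\Omega^{\ast}=\{y=0\}\cup\{z=0\}$, the Simons cone, on which $u\equiv0$. Set
\[
\eta:=\frac{u}{y}+\frac{u}{z}-u_y-u_z=\frac{u}{y}+\frac{u}{z}-\sqrt{2}\,u_s ,
\]
the second expression showing that $\eta$ is smooth up to and across the diagonal $\{y=z\}$. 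Since $u$ vanishes on $\partial\Omega^{\ast}$ together with its tangential derivative, one gets $\eta=0$ on $\partial\Omega^{\ast}$, and $\eta\to0$ at infinity. The plan is to exhibit $c\ge0$ on $\Omega^{\ast}$ with $L\eta-c\eta\le0$ on the set $\{\eta<0\}$; since $u>0$ and $(L-c)u=-2u^{3}-cu\le0$ in $\Omega^{\ast}$, the maximum principle of Theorem~\ref{maximum} then forces $\eta\ge0$.

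Using $Lu=-2u^{3}$ (see the proof of Lemma~\ref{u-u}) and $L(u_y+u_z)=\sqrt{2}\,Lu_s=\frac{6(u_y+u_z)}{(y+z)^{2}}$, which follows from \eqref{usut}, a direct computation of $L(u/y)$ and $L(u/z)$ gives
\[
L\eta=-2u^{3}\Bigl(\frac1y+\frac1z\Bigr)+2u\Bigl(\frac1{y^{3}}+\frac1{z^{3}}\Bigr)+\frac{6u}{yz(y+z)}-\frac{2u_y}{y^{2}}-\frac{2u_z}{z^{2}}-\frac{6(u_y+u_z)}{(y+z)^{2}} .
\]
The Jacobi fields $(s+t)^{-2},(s+t)^{-3}$ of the cone suggest taking $c=\frac{2}{y^{2}}+\frac{2}{z^{2}}\ge0$, which cancels exactly the terms $2u/y^{3}$, $2u/z^{3}$, $-2u_y/y^{2}$, $-2u_z/z^{2}$; after the elementary identity $\frac{6u}{yz(y+z)}-\frac{2u}{yz^{2}}-\frac{2u}{y^{2}z}=-\frac{2u(y^{2}-yz+z^{2})}{y^{2}z^{2}(y+z)}$ the required inequality $L\eta-c\eta\le0$ becomes the pointwise bound
\[
\frac{2u_y}{z^{2}}+\frac{2u_z}{y^{2}}\ \le\ \frac{6(u_y+u_z)}{(y+z)^{2}}+\frac{2u(y^{2}-yz+z^{2})}{y^{2}z^{2}(y+z)}+2u^{3}\Bigl(\frac1y+\frac1z\Bigr)\qquad\text{on }\{\eta<0\},
\]
where on $\{\eta<0\}$ one may in addition use $u<\frac{yz}{y+z}(u_y+u_z)$.

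For this last inequality I would use $u>0$, $u_y,u_z\ge0$ on $\Omega^{\ast}$, the Modica bound $u_y+u_z\le1-u^{2}$ coming from \eqref{modica}, and the sharp second–derivative information of this section — Lemma~\ref{us/s} (that is, $s\,u_{ss}\le u_s$), the estimate $\frac{u_s}{s}+\frac{u_t}{t}\ge u_{ss}+u_{tt}$ proved just above, and Proposition~\ref{us+ut} — which together give the lower bound for $u_y+u_z$ and the $O(y^{-3})$-type decay of $u_y$ near the cone needed to make the right–hand side dominate. The main obstacle, I expect, is a neighbourhood of the Simons cone and of the origin: there $u\to0$, and the leading–order behaviour ($u\approx0.434\,yz$ near the origin, $u\approx H(z)$ along the cone for $y$ large) makes the two sides of the displayed inequality agree to leading order, so that $c=\frac{2}{y^{2}}+\frac{2}{z^{2}}$ is exactly borderline. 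One must therefore win at the next order from the negative terms $-2u^{3}(1/y+1/z)$ and $-\frac{2u(y^{2}-yz+z^{2})}{y^{2}z^{2}(y+z)}$, or else, in the spirit of the supersolution $\Phi$ of the introduction, add a small explicit correction to $\eta$ (or a lower–order perturbation to $c$) producing a strict inequality before Theorem~\ref{maximum} is applied in $\Omega^{\ast}$.
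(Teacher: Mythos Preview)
Your computation of $L\eta$ is correct and agrees with the paper's. The proof, however, is genuinely incomplete: after choosing the symmetric potential $c=\frac{2}{y^{2}}+\frac{2}{z^{2}}$ you arrive at a pointwise inequality that you do not prove. You yourself call the choice ``exactly borderline'' near the cone and near the origin, and then propose either to ``win at the next order'' or to modify $\eta$ or $c$---none of which is carried out. The ingredients you list (Modica, Lemma~\ref{us/s}, Proposition~\ref{us+ut}) do not assemble into a proof of the displayed inequality; in particular, near $z=0$ the leading terms on both sides reduce to the statement $u_{z}(y,0)\ge y\,u_{yz}(y,0)$, which is exactly the inequality~\eqref{uzy} that the paper \emph{derives from} Lemma~\ref{usus}, so you cannot invoke it here.

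The paper's argument avoids this borderline situation by two simple moves you are missing. First, it uses the already-proved lemma $t u_{s}+s u_{t}\le 0$ in $\Omega$, which in $(y,z)$ variables reads $y u_{y}-z u_{z}<0$. From this, if $\eta<0$ at some point $p\in\Omega$ one deduces immediately that $\dfrac{u}{z}-u_{z}<0$ at $p$ (indeed, $y(\tfrac{u}{y}-u_{y})=u-yu_{y}>u-zu_{z}=z(\tfrac{u}{z}-u_{z})$, so if the latter were $\ge 0$ both summands of $\eta$ would be positive). Second, instead of your symmetric $c$, the paper takes the \emph{asymmetric} potential $c=\dfrac{6}{(y+z)^{2}}+\dfrac{2}{y^{2}}$ on $\Omega$ (and its mirror on the reflected half). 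With this choice one gets the clean identity
\[
L\eta-\frac{6}{(y+z)^{2}}\eta-\frac{2}{y^{2}}\eta
=-2u^{3}\Bigl(\tfrac{1}{y}+\tfrac{1}{z}\Bigr)+\Bigl(\tfrac{2}{z^{2}}-\tfrac{2}{y^{2}}\Bigr)\Bigl(\tfrac{u}{z}-u_{z}\Bigr),
\]
and since $y>z$ in $\Omega$ and $\tfrac{u}{z}-u_{z}<0$ on $\{\eta<0\}$, the right-hand side is nonpositive and the maximum principle applies directly. The missing idea, then, is to exploit $t u_{s}+s u_{t}\le 0$ together with an asymmetric $c$; your symmetric choice forces you into an inequality that is not decidable with the tools at hand.

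A minor remark: your description of $\partial\Omega^{\ast}$ as ``$\{y=0\}\cup\{z=0\}$, the Simons cone'' is not quite right in the $(s,t)$ picture (the set $y=0$ is just the origin when $s,t\ge 0$), but this does not affect the substance of the argument.
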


\begin{proof}
Let $\eta=\frac{u}{y}+\frac{u}{z}-u_{y}-u_{z}.$ We first observe that $\eta=0$
on $\partial\Omega^{\ast}.$ We have
\begin{align*}
L\eta &  =-\frac{2u^{3}}{y}-2\frac{u_{y}}{y^{2}}+u\left(  \frac{2}{y^{3}%
}-\frac{6}{y\left(  y^{2}-z^{2}\right)  }\right) \\
&  -\frac{2u^{3}}{z}-2\frac{u_{z}}{z^{2}}+u\left(  \frac{2}{z^{3}}+\frac
{6}{z\left(  y^{2}-z^{2}\right)  }\right) \\
&  -\frac{6y^{2}+6z^{2}}{\left(  y^{2}-z^{2}\right)  ^{2}}u_{y}+\frac
{12yz}{\left(  y^{2}-z^{2}\right)  ^{2}}u_{z}\\
&  +\frac{12yz}{\left(  y^{2}-z^{2}\right)  ^{2}}u_{y}-\frac{6y^{2}+6z^{2}%
}{\left(  y^{2}-z^{2}\right)  ^{2}}u_{z}.
\end{align*}
That is,
\begin{align*}
L\eta-\frac{6}{\left(  y+z\right)  ^{2}}\eta &  =-\frac{2u^{3}}{y}%
-2\frac{u_{y}}{y^{2}}+u\left(  \frac{2}{y^{3}}-\frac{6}{y\left(  y^{2}%
-z^{2}\right)  }\right) \\
&  -\frac{2u^{3}}{z}-2\frac{u_{z}}{z^{2}}+u\left(  \frac{2}{z^{3}}+\frac
{6}{z\left(  y^{2}-z^{2}\right)  }\right) \\
&  -\frac{6}{\left(  y+z\right)  ^{2}}\left(  \frac{u}{y}+\frac{u}{z}\right)
\\
&  =-2u^{3}\left(  \frac{1}{y}+\frac{1}{z}\right)  -\frac{2u_{y}}{y^{2}}%
-\frac{2u_{z}}{z^{2}}+\frac{2u}{y^{3}}+\frac{2u}{z^{3}}.
\end{align*}
On the other hand, we know that in $\Omega,$
\begin{align*}
yu_{y}-zu_{z}  &  =\frac{1}{2}\left(  s+t\right)  \left(  u_{s}+u_{t}\right)
-\frac{1}{2}\left(  s-t\right)  \left(  u_{s}-u_{t}\right) \\
&  =su_{t}+tu_{s}<0,
\end{align*}
while in $\Omega_{r},$%
\[
yu_{y}-zu_{z}>0.
\]
Suppose at some point $p,$ $\frac{u}{y}+\frac{u}{z}-u_{y}-u_{z}<0.$ Then due
to symmetry, we can assume $p\in\Omega.$ Since $u-yu_{y}>u-zu_{z},$ we have,
at $p,$
\[
\frac{u}{z}-u_{z}<0.
\]
We then write
\[
L\eta-\frac{6}{\left(  y+z\right)  ^{2}}\eta-\frac{2}{y^{2}}\eta\leq\left(
\frac{2}{z^{2}}-\frac{2}{y^{2}}\right)  \left(  \frac{u}{z}-u_{z}\right)
\leq0.
\]
This contradicts with the maximum principle. Hence
\[
\frac{u}{y}+\frac{u}{z}-u_{y}-u_{z}\geq0.
\]

\end{proof}

The above lemma in particular implies that
\begin{equation}
u\geq yu_{y}\text{ in }\Omega. \label{eq1}%
\end{equation}
On the other hand, we conjecture that $u_{yy}<0$ in $\Omega^{\ast}.$ If this
is true, then we will also have $\left(  \ref{eq1}\right)  .$

Taking the $z$ derivative in $\left(  \ref{eq1}\right)  $, we find that
\begin{equation}
u_{z}-yu_{yz}\geq0\text{, if }z=0. \label{uzy}%
\end{equation}
With this estimates at hand, we want to prove the following

\begin{lemma}
\label{ut/t}
\[
-\frac{u_{t}}{t}+u_{st}+u_{tt}\geq0,\text{ in }\Omega.
\]

\end{lemma}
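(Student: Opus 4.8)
The plan is to run the maximum‑principle scheme used repeatedly in this section. Set
\[
\eta:=-\frac{u_t}{t}+u_{st}+u_{tt}.
\]
The precise form of $\eta$ is dictated by the remark preceding the lemma: the genuinely singular term $u_t/t^{3}$ produced by differentiating the equation twice in $t$ can only be cancelled inside a maximum‑principle argument if the auxiliary function carries a $-u_t/t$ term. The goal is to produce a coefficient $c\ge 0$ on $\Omega$ with $(L-c)\eta\le 0$ in $\Omega$ and $\eta\ge 0$ on $\partial\Omega$, after which Theorem \ref{maximum} applies. First I would record the identities obtained by differentiating \eqref{usut},
\[
Lu_{st}=\Bigl(\frac{3}{s^{2}}+\frac{3}{t^{2}}\Bigr)u_{st}+6uu_su_t,\qquad Lu_{tt}=\frac{6}{t^{2}}u_{tt}-\frac{6}{t^{3}}u_t+6uu_t^{2}
\]
(the second already appears in the remark), together with the elementary identity $L\!\left(\frac{u_t}{t}\right)=\frac{Lu_t}{t}-\frac{2u_{tt}}{t^{2}}-\frac{u_t}{t^{3}}=\frac{2u_t}{t^{3}}-\frac{2u_{tt}}{t^{2}}$. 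Adding these,
\begin{align*}
L\eta&=-\frac{2u_t}{t^{3}}+\frac{2u_{tt}}{t^{2}}+\Bigl(\frac{3}{s^{2}}+\frac{3}{t^{2}}\Bigr)u_{st}+6uu_su_t+\frac{6u_{tt}}{t^{2}}-\frac{6u_t}{t^{3}}+6uu_t^{2}\\
&=-\frac{8u_t}{t^{3}}+\frac{8u_{tt}}{t^{2}}+\Bigl(\frac{3}{s^{2}}+\frac{3}{t^{2}}\Bigr)u_{st}+6uu_t(u_s+u_t).
\end{align*}

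The multiplier should be $c=\frac{8}{t^{2}}$: since $\frac{8}{t^{2}}\eta=-\frac{8u_t}{t^{3}}+\frac{8u_{st}}{t^{2}}+\frac{8u_{tt}}{t^{2}}$, this choice cancels the singular $u_t/t^{3}$ term and the $u_{tt}/t^{2}$ term simultaneously, leaving
\[
L\eta-\frac{8}{t^{2}}\eta=\Bigl(\frac{3}{s^{2}}-\frac{5}{t^{2}}\Bigr)u_{st}+6uu_t(u_s+u_t).
\]
In $\Omega$ one has $s>t$, hence $\frac{3}{s^{2}}-\frac{5}{t^{2}}<\frac{3}{t^{2}}-\frac{5}{t^{2}}<0$; together with $u_{st}\ge 0$ this makes the first term $\le 0$, and since $u>0$, $u_t\le 0$ and $u_s+u_t\ge 0$ (monotonicity) the second term is $\le 0$ as well. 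Thus $\bigl(L-\frac{8}{t^{2}}\bigr)\eta\le 0$ in $\Omega$ with $\frac{8}{t^{2}}\ge 0$.

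It remains to check $\eta\ge 0$ on $\partial\Omega=\{s=t\}\cup\{t=0\}$. On the diagonal $\{s=t,\ t>0\}$ the odd symmetry $U(s,t)=-U(t,s)$ forces, after differentiating twice, $u_{st}=0$, $u_{tt}=-u_{ss}$ and $u_t=-u_s$; hence there $\eta=\frac{u_s}{s}-u_{ss}$, which is $\ge 0$ by Lemma \ref{us/s} (extended up to $\partial\Omega^{\ast}$). On $\{t=0\}$, $u$ is a smooth function of $t^{2}$ near $t=0$, so $u_t/t$ and $u_{tt}$ extend continuously with a common value there while $u_{st}\to 0$; hence $\eta\to 0$. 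Since $\eta$ also decays at infinity, Theorem \ref{maximum} yields $\eta\ge 0$ throughout $\Omega$, which is the assertion.

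The computation is routine; the content is entirely in the two choices above — building $-u_t/t$ into $\eta$ so that the $t^{-3}$ singularity is exactly a multiple of $-u_t/t$, and then taking $c=8/t^{2}$, which is the smallest multiple of $t^{-2}$ that both removes that singularity and flips the sign of the residual $u_{st}$‑coefficient everywhere on $\Omega$ (via $3/s^{2}<3/t^{2}<5/t^{2}$). A smaller constant leaves a positive $u_{st}$‑term near the $s$‑axis, where $u_{st}/t^{2}$ is large, and the argument breaks; this is the only delicate point, and there is no genuine obstacle.
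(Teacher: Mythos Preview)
Your argument is correct and essentially identical to the paper's: the same auxiliary function $\eta=-u_t/t+u_{st}+u_{tt}$, the same multiplier $c=8/t^{2}$, and the same differential inequality $(L-8/t^{2})\eta=\bigl(3/s^{2}-5/t^{2}\bigr)u_{st}+6uu_t(u_s+u_t)\le 0$. The only cosmetic difference is the boundary check on the diagonal---you reduce by symmetry to $u_s/s-u_{ss}\ge 0$ and invoke Lemma~\ref{us/s}, while the paper passes to $(y,z)$ coordinates and invokes \eqref{uzy}; these are two phrasings of the same inequality, and your explicit treatment of the $t\to 0$ boundary is a welcome addition the paper leaves implicit.
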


\begin{proof}
Let $a>0$ be a parameter and $\eta=-\frac{au_{t}}{t}+u_{st}+u_{tt}.$ We
compute
\begin{align*}
L\eta &  =-\frac{2a}{t^{3}}u_{t}+\frac{2a}{t^{2}}u_{tt}+\left(  \frac{3}%
{s^{2}}+\frac{3}{t^{2}}\right)  u_{st}+6u_{s}u_{t}u\\
&  +\frac{6}{t^{2}}u_{tt}-\frac{6}{t^{3}}u_{t}+6u_{t}^{2}u\\
&  =\frac{6+2a}{t^{2}}\eta+\left(  \frac{3}{s^{2}}+\frac{-3-2a}{t^{2}}\right)
u_{st}+6u_{t}u\left(  u_{s}+u_{t}\right)  .
\end{align*}
It follows that for $a\geq0,$ $L\eta-\frac{6+2a}{t^{2}}\eta\leq0.$

Let us choose $a=1.$ It remains to verify that $\eta\geq0$ on $\partial
\Omega.$ If $z=0,$ \ then \thinspace$u_{st}=0$ and $u_{tt}=-u_{yz}.$ Then
\[
-u_{t}+tu_{tt}=\frac{u_{z}}{\sqrt{2}}+\frac{y}{\sqrt{2}}\left(  -u_{yz}%
\right)  =\frac{1}{\sqrt{2}}\left(  u_{z}-yu_{yz}\right)  .
\]
By $\left(  \ref{uzy}\right)  ,$ we know that $-u_{t}+tu_{tt}\geq0$ on the
Simons cone. This finishes the proof.
\end{proof}

\begin{lemma}
\label{uss2ust}We have the following estimate(not optimal):%
\[
3\left(  \frac{1}{t}-\frac{1}{s}\right)  u_{s}+u_{ss}+2u_{st}\geq0,\text{ in
}\Omega.
\]

\end{lemma}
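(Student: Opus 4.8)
The plan is to prove the inequality
\[
3\left(\frac{1}{t}-\frac{1}{s}\right)u_{s}+u_{ss}+2u_{st}\geq0\text{ in }\Omega
\]
by the same maximum-principle scheme used repeatedly above. Set
\[
\eta:=3\left(\frac{1}{t}-\frac{1}{s}\right)u_{s}+u_{ss}+2u_{st}.
\]
First I would check the boundary behavior on $\partial\Omega$. On the diagonal $s=t$ the coefficient $\frac{1}{t}-\frac{1}{s}$ vanishes, so $\eta=u_{ss}+2u_{st}$ there; one hopes to combine Lemma \ref{us/s} (giving $u_{ss}\le u_{s}/s$, hence not directly sign-definite) with the earlier antisymmetry $U(s,t)=-U(t,s)$, which forces relations among the second derivatives along $s=t$. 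Concretely, differentiating the antisymmetry relation shows $u_{ss}+2u_{st}+u_{tt}=0$ on the diagonal (the pure Laplacian of an antisymmetric function vanishes there), so $\eta=-u_{tt}\ge0$ on that part of $\partial\Omega$ by $u_{tt}\le0$. On the part $t=0$ one uses $u_{t}=u_{st}=0$ and $u_{ss}\ge0$ near the $y$-axis, together with the $1/t$-factor, which requires care but is exactly the kind of near-axis analysis handled in Lemmas \ref{ut/t} and \ref{u-u}.

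Next I would compute $L\eta$. Using $\Delta(s^{-1})=-3s^{-3}\cdot\frac{1}{? }$... more precisely, from the established identities $Lu_{s}=\frac{3}{s^{2}}u_{s}$, $L u_{ss}=\frac{6}{s^{2}}u_{ss}-\frac{6}{s^{3}}u_{s}+6u_{s}^{2}u$, $L u_{st}=\left(\frac{3}{s^{2}}+\frac{3}{t^{2}}\right)u_{st}+6u_{s}u_{t}u$, and the product rule $L(f u_{s})=f\,Lu_{s}+2\nabla f\cdot\nabla u_{s}+u_{s}\Delta f$ applied to $f=3(t^{-1}-s^{-1})$ (note $\Delta f=0$ since $\Delta(s^{-1})=\Delta(t^{-1})=0$ for the eight-dimensional radial Laplacian — here I should double-check: in the $(s,t)$ coordinates $\Delta s^{-1}=\partial_s^2 s^{-1}+\tfrac{3}{s}\partial_s s^{-1}=2s^{-3}-3s^{-3}=-s^{-3}$, so $\Delta f$ is not zero and contributes $-3(t^{-3}-s^{-3})u_s$). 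Collecting everything, I would write $L\eta=\frac{c}{t^{2}}\eta+(\text{remainder})$ for a suitable constant $c$ (matching the coefficient on the worst blow-up term near $t=0$, presumably $c=6$ by analogy with the preceding lemmas), with the remainder involving $u_{st}$, $u_{s}u_{t}u$, $u_{s}^{2}u$, terms with powers of $1/s,1/t$ times $u_s$, and the gradient term $u_s$. The goal is to show the remainder is $\le0$ in $\Omega$ so that $L\eta-\frac{c}{t^{2}}\eta\le0$ and Theorem \ref{maximum} applies.

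The main obstacle I expect is controlling the sign of that remainder, which will not be term-by-term negative: it mixes the good terms ($u_{st}\ge0$ enters with a sign that may be wrong, $u_{tt}\le0$, Modica's bound $\sqrt2 u_s\le 1-u^2$) against bad ones (the $-u_s/s^3$-type terms and cross terms). I anticipate needing exactly the non-optimal auxiliary estimates proven just above — Lemma \ref{us/s} ($u_{ss}\le u_{s}/s$), Lemma \ref{utust} ($u_{st}\le-\sqrt2 u_t u$), the decay bound \eqref{decay} for $u_{s}+u_{t}$, and Proposition \ref{us+ut} — to absorb the problematic pieces, together with $u_{st}\ge0$ and $u_{tt}\le0$. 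The coefficient $3$ in front of $(t^{-1}-s^{-1})u_s$ and the $2$ in front of $u_{st}$ are presumably tuned precisely so that these substitutions close the estimate; getting those constants to work out is the delicate bookkeeping step. Once $L\eta-\frac{c}{t^2}\eta\le0$ in $\Omega$ and $\eta\ge0$ on $\partial\Omega$ are both in hand, the maximum principle of Theorem \ref{maximum} gives $\eta\ge0$ in $\Omega$, completing the proof.
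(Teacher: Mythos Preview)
Your overall strategy---define $\eta$, compute $L\eta$ minus a nonnegative multiple of $\eta$, show the result is $\leq 0$, and invoke Theorem \ref{maximum}---is exactly the paper's. Your boundary argument on the diagonal $s=t$ (using the antisymmetry to get $\eta=-u_{tt}\geq 0$) is correct, and as $t\to 0^{+}$ the term $\frac{3}{t}u_{s}\to+\infty$ handles that part of $\partial\Omega$ without further work.

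There are, however, two concrete gaps. First, the weight must be $\frac{6+2a}{s^{2}}$ (with $a=3$), not $\frac{c}{t^{2}}$. The reason is structural: $Lu_{ss}$ contributes $\frac{6}{s^{2}}u_{ss}$, and the cross term $2f_{s}\,\partial_{s}u_{s}$ with $f=a(t^{-1}-s^{-1})$ contributes $\frac{2a}{s^{2}}u_{ss}$, so subtracting $\frac{6+2a}{s^{2}}\eta$ removes $u_{ss}$ from the remainder entirely. Your choice $\frac{c}{t^{2}}$ would leave a residual $\bigl(\frac{6+2a}{s^{2}}-\frac{c}{t^{2}}\bigr)u_{ss}$; since $u_{ss}$ changes sign in $\Omega$, none of the tools you list control it cleanly.

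Second, once $u_{ss}$ is eliminated the remainder is
\[
(\text{negative})\cdot u_{s}+(\text{negative})\cdot u_{st}+6u\,u_{s}(u_{s}+2u_{t}),
\]
and the only possibly positive piece is the last term. The paper's decisive step---absent from your plan---is a region split: for $s\leq 2t$ the inequality $|u_{t}|\geq\frac{t}{s}u_{s}$ forces $u_{s}+2u_{t}\leq 0$ outright; for $s>2t$ one invokes Proposition \ref{us+ut} and absorbs the resulting bound into the strongly negative $u_{s}$-coefficient (which contains a $-\frac{a}{t^{3}}$ piece). Lemmas \ref{us/s} and \ref{utust} are not used here; the essential inputs are the ratio bound $|u_{t}|\geq\frac{t}{s}u_{s}$ and Proposition \ref{us+ut}.
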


\begin{proof}
Let $a,d$ be two parameters. Define
\[
\eta=a\left(  \frac{1}{t}-\frac{1}{s}\right)  u_{s}+u_{ss}+du_{st}.
\]
We compute
\begin{align*}
L\eta &  =\frac{3a}{s^{2}}\left(  \frac{1}{t}-\frac{1}{s}\right)  u_{s}%
-\frac{2a}{t^{2}}u_{st}+\frac{2a}{s^{2}}u_{ss}+a\left(  \frac{1}{s^{3}}%
-\frac{1}{t^{3}}\right)  u_{s}\\
&  +\frac{6}{s^{2}}u_{ss}-\frac{6}{s^{3}}u_{s}+6u_{s}^{2}u+d\left(  \frac
{3}{s^{2}}+\frac{3}{t^{2}}\right)  u_{st}+6du_{s}u_{t}u.
\end{align*}
Then
\begin{align*}
&  L\eta-\frac{6+2a}{s^{2}}\eta\\
&  =\left(  -\frac{6+2a}{s^{2}}a\left(  \frac{1}{t}-\frac{1}{s}\right)
+\frac{3a}{s^{2}}\left(  \frac{1}{t}-\frac{1}{s}\right)  +a\left(  \frac
{1}{s^{3}}-\frac{1}{t^{3}}\right)  -\frac{6}{s^{3}}\right)  u_{s}\\
&  +\left(  -\frac{6+2a}{s^{2}}d-\frac{2a}{t^{2}}+\left(  \frac{3}{s^{2}%
}+\frac{3}{t^{2}}\right)  d\right)  u_{st}+6u_{s}\left(  u_{s}+du_{t}\right)
u.
\end{align*}
The right hand side is equal to
\begin{align*}
&  \left(  \left(  \frac{1}{t}-\frac{1}{s}\right)  \frac{-3a-2a^{2}}{s^{2}%
}+a\left(  \frac{1}{s^{3}}-\frac{1}{t^{3}}\right)  -\frac{6}{s^{3}}\right)
u_{s}\\
&  +\left(  -\frac{3+2a}{s^{2}}d-\frac{2a-3d}{t^{2}}\right)  u_{st}%
+6u_{s}\left(  u_{s}+du_{t}\right)  u.
\end{align*}
Let us take $d=2,a=3.$ Since $\left\vert u_{t}\right\vert \geq\frac{t}{s}%
u_{s},$ we have
\[
u_{s}+2u_{t}\leq0,\text{ if }s\leq2t.
\]
Then applying Proposition \ref{us+ut} in the region $\left\{  s>2t\right\}  ,$
we get
\[
L\eta-\frac{9}{s^{2}}\eta\leq0.
\]
By maximum principle, $\eta\geq0.$ The proof is finished.
\end{proof}

Next we want to estimate $u_{st}$ in terms of $u_{ss}+u_{tt}.$ This is the
content of the following

\begin{lemma}
\label{ust+uss}%
\[
3\left(  \frac{1}{t}-\frac{1}{s}\right)  \left(  u_{s}-u_{t}\right)
+2u_{st}+u_{ss}+u_{tt}\geq0,\text{ in }\Omega.
\]

\end{lemma}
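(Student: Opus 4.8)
The plan is to run the comparison argument of this section once more. Set $\eta$ equal to the left-hand side,
\[
\eta:=3\left(\frac1t-\frac1s\right)(u_s-u_t)+2u_{st}+u_{ss}+u_{tt},
\]
and try to prove that $\eta$ is a supersolution of $L-c$ with $c:=\frac{12}{s^{2}}\ge 0$, i.e. $L\eta-\frac{12}{s^{2}}\eta\le 0$ in $\Omega$, and that $\eta\ge 0$ on $\partial\Omega$; Theorem \ref{maximum} then yields $\eta\ge 0$ in $\Omega$. (As in Lemma \ref{uss2ust} one may keep a parameter $a$ in front of $\left(\frac1t-\frac1s\right)(u_s-u_t)$ and take $c=\frac{6+2a}{s^{2}}$; $a=3$ is the borderline value keeping the $u_{st}$-remainder of the right sign.) On the diagonal $\{s=t\}$ the odd symmetry $U(s,t)=-U(t,s)$ forces $u_{st}=0$ and $u_{ss}+u_{tt}=0$, and $\frac1t-\frac1s=0$, so $\eta=0$ there; on the axis $\{t=0\}$ one has $u_t=u_{st}=0$, $u_s>0$ and $u_{ss},u_{tt}$ finite, so $\eta\to+\infty$; hence $\eta\ge 0$ on $\partial\Omega$.

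For the main computation I would use $Lu_s=\frac{3}{s^{2}}u_s,\ Lu_t=\frac{3}{t^{2}}u_t$ from \eqref{usut}, the analogous identities $Lu_{ss}=\frac{6}{s^{2}}u_{ss}-\frac{6}{s^{3}}u_s+6uu_s^{2}$, $Lu_{tt}=\frac{6}{t^{2}}u_{tt}-\frac{6}{t^{3}}u_t+6uu_t^{2}$, $Lu_{st}=\left(\frac{3}{s^{2}}+\frac{3}{t^{2}}\right)u_{st}+6uu_su_t$ (obtained by differentiating \eqref{usut}), and the Leibniz rule $L(wv)=wLv+2\nabla w\cdot\nabla v+v\,\Delta w$ with $w=\frac1t-\frac1s$, $v=u_s-u_t$, so that $\nabla w=(s^{-2},-t^{-2})$, $\Delta w=\frac1{s^{3}}-\frac1{t^{3}}$, $Lv=\frac{3}{s^{2}}u_s-\frac{3}{t^{2}}u_t$. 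Two cancellations are the point: the second-order $u_{st}$-terms in $L\eta$ cancel identically, and $c=\frac{12}{s^{2}}$ is precisely the weight that cancels the $u_{ss}$-terms against $c\eta$. What is left among the second-order terms of $(L-c)\eta$ is $12\left(\frac1{t^{2}}-\frac1{s^{2}}\right)u_{tt}\le 0$ and $-\frac{24}{s^{2}}u_{st}\le 0$, both with the favourable sign since $u_{tt}\le 0$ and $u_{st}\ge 0$. Thus $(L-c)\eta$ becomes these two non-positive terms, plus a first-order remainder $R_1$ — an explicit linear combination of $\frac{u_s}{ts^{2}},\frac{u_s}{s^{3}},\frac{u_s}{t^{3}},\frac{u_t}{t^{3}},\frac{u_t}{st^{2}},\frac{u_t}{s^{3}},\frac{u_t}{ts^{2}}$ — plus the single term $6u(u_s+u_t)^{2}\ge 0$, the only piece of the wrong sign.

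It then remains to show $R_1+6u(u_s+u_t)^{2}$ is dominated by the negative terms, and here the earlier estimates do the work. The refined monotonicity $|u_t|\ge\frac ts u_s$ (the inequality $tu_s+su_t\le 0$) converts the positive $\frac{u_s}{s^{3}}$-part of $R_1$ into a $\frac{u_t}{ts^{2}}$-type term and produces cancellations; $u_s+u_t\ge 0$ (hence $|u_t|\le u_s$) lets the negative $\frac{u_s}{ts^{2}}$-terms absorb the positive $\frac{u_t}{s^{3}}$-part; near $\{t=0\}$ the delicate $\frac{u_t}{t^{3}}$-part is handled by pairing it with $12\left(\frac1{t^{2}}-\frac1{s^{2}}\right)u_{tt}$ via Lemma \ref{ut/t} (which governs $u_{tt}-\frac{u_t}{t}$), after which the strongly negative $-\frac{u_s}{t^{3}}$ dominates what is left; and the nonlinear term is bounded through $6u(u_s+u_t)^{2}\le 6u(u_s+u_t)(u_s-u_t)$ (since $0\le u_s+u_t\le u_s-u_t$), the decay \eqref{decay} of $u_s+u_t$, the Modica bound \eqref{modica} (whence $u_s-u_t\le\sqrt2|\nabla u|\le 1-u^{2}$), and Proposition \ref{us+ut}, so that it too is absorbed by $-\frac{u_s}{t^{3}}$ and $-\frac{u_s}{ts^{2}}$. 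As in Lemma \ref{uss2ust}, it is convenient to split $\Omega$ into $\{s\le 2t\}$, where $|u_t|\ge\frac ts u_s\ge\frac12 u_s$ gives $u_s+2u_t\le 0$, and $\{s>2t\}$, applying Proposition \ref{us+ut} only on the latter.

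I expect the hard part to be the intermediate regime — bounded away from the Simons cone, from $\{t=0\}$ and from the origin — where all of $\frac{u_s}{ts^{2}},\frac{u_s}{t^{3}},\frac{u_t}{t^{3}},u_{tt},u_{st}$ and $u(u_s+u_t)^{2}$ have comparable size, so that no single crude pairing closes the estimate and one must apply Lemma \ref{ut/t}, Proposition \ref{us+ut}, the monotonicity inequalities $|u_t|\ge\frac ts u_s$ and $u_s+u_t\ge 0$, and \eqref{modica} simultaneously with the correct numerical constants. The slack needed for this is available precisely because, like the other estimates of this section, the stated inequality is non-optimal.
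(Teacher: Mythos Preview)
Your overall strategy—set $\eta$ equal to the left side, compute $(L-c)\eta$, and close with the maximum principle—matches the paper, and your cancellation of the $u_{st}$-terms in $L\eta$ and the identification of $6u(u_s+u_t)^2$ are correct. But you diverge from the paper at a crucial point: you take the \emph{asymmetric} potential $c=\tfrac{12}{s^2}$, whereas the paper uses the \emph{symmetric} weight $c=(3+a)(\tfrac{1}{t^2}+\tfrac{1}{s^2})=6(\tfrac{1}{t^2}+\tfrac{1}{s^2})$. With that choice the second-order remainder is
\[
6\Big(\tfrac{1}{t^2}-\tfrac{1}{s^2}\Big)(u_{tt}-u_{ss})-12\Big(\tfrac{1}{t^2}+\tfrac{1}{s^2}\Big)u_{st},
\]
and the paper kills this not by sign considerations alone but by invoking Lemma~\ref{uss2ust} (the bound $3(\tfrac{1}{t}-\tfrac{1}{s})u_s+u_{ss}+2u_{st}\ge 0$), which converts the whole block into $18(\tfrac{1}{t^2}-\tfrac{1}{s^2})(\tfrac{1}{t}-\tfrac{1}{s})u_s$. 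The first-order part then splits symmetrically into coefficients of $u_s-u_t$ and $u_s+u_t$, and with $a=3$ both coefficients are manifestly $\le 0$; only the nonlinear $6u(u_s+u_t)^2$ and the $18(\cdots)u_s$ term remain, and these are absorbed by the decay estimate of Proposition~\ref{us+ut}. Lemma~\ref{uss2ust} is the key input you never invoke.

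Your asymmetric route leaves, after the nice cancellations you found, the pair $12(\tfrac{1}{t^2}-\tfrac{1}{s^2})u_{tt}-\tfrac{12}{t^3}u_t=\tfrac{12}{t^2}(u_{tt}-\tfrac{u_t}{t})-\tfrac{12}{s^2}u_{tt}$, and you propose to control $u_{tt}-\tfrac{u_t}{t}$ by Lemma~\ref{ut/t}. But that lemma gives $-\tfrac{u_t}{t}+u_{st}+u_{tt}\ge 0$, i.e.\ a \emph{lower} bound $u_{tt}-\tfrac{u_t}{t}\ge -u_{st}$; since the coefficient $\tfrac{12}{t^2}$ is positive and you need $(L-c)\eta\le 0$, you need an \emph{upper} bound, which Lemma~\ref{ut/t} does not provide. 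Near $\{t=0\}$ you are in fact saved, but by smoothness (Taylor expansion gives $u_{tt}-\tfrac{u_t}{t}=O(t^2)$, so $\tfrac{12}{t^2}(u_{tt}-\tfrac{u_t}{t})$ stays bounded while $-\tfrac{3}{t^3}u_s\to-\infty$), not by Lemma~\ref{ut/t}. In the intermediate regime $t$ bounded and $s$ large, your cited tools do not yield the needed upper bound on $u_{tt}-\tfrac{u_t}{t}$; for instance, with $|u_t|\le u_s$ the combined first-order coefficient of $u_s$ becomes $(\tfrac{9}{t^2}-\tfrac{63}{s^2})(\tfrac{1}{t}-\tfrac{1}{s})>0$ once $s>\sqrt{7}\,t$, and neither $-\tfrac{24}{s^2}u_{st}$ nor the $u_{tt}$-term obviously absorbs it. This is the genuine gap: the wrong-direction appeal to Lemma~\ref{ut/t} leaves the intermediate regime unclosed, whereas the paper's symmetric $c$ together with Lemma~\ref{uss2ust} avoids ever needing an upper bound on $u_{tt}-\tfrac{u_t}{t}$.
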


\begin{proof}
Let $\eta=a\left(  \frac{1}{t}-\frac{1}{s}\right)  \left(  u_{s}-u_{t}\right)
+2u_{st}+u_{ss}+u_{tt}.$ Then
\begin{align*}
L\eta &  =-a\left(  \frac{1}{t^{3}}-\frac{1}{s^{3}}\right)  \left(
u_{s}-u_{t}\right)  +\frac{2a}{t^{2}}\left(  u_{tt}-u_{st}\right) \\
&  +\frac{2a}{s^{2}}\left(  u_{ss}-u_{st}\right)  +a\left(  \frac{1}{t}%
-\frac{1}{s}\right)  \left(  \frac{3u_{s}}{s^{2}}-\frac{3u_{t}}{t^{2}}\right)
\\
&  +\frac{6u_{ss}}{s^{2}}-\frac{6u_{s}}{s^{3}}+\frac{6u_{tt}}{t^{2}}%
-\frac{6u_{t}}{t^{3}}+\left(  \frac{6}{s^{2}}+\frac{6}{t^{2}}\right)
u_{st}+6\left(  u_{s}+u_{t}\right)  ^{2}u.
\end{align*}
Let us denote $h_{1}=u_{ss}+u_{st},h_{2}=u_{st}+u_{tt},$ and $h=h_{1}+h_{2}.$
Then the terms in $L\eta$ involving second order derivatives can be written
as
\begin{align*}
&  \frac{6+2a}{s^{2}}h_{1}+\frac{6+2a}{t^{2}}h_{2}-\left(  \frac{4a}{s^{2}%
}+\frac{4a}{t^{2}}\right)  u_{st}\\
&  =\left(  3+a\right)  \left(  \frac{1}{t^{2}}+\frac{1}{s^{2}}\right)
h+\left(  3+a\right)  \left(  \frac{1}{t^{2}}-\frac{1}{s^{2}}\right)  \left(
u_{tt}-u_{ss}\right) \\
&  -\left(  \frac{4a}{t^{2}}+\frac{4a}{s^{2}}\right)  u_{st}.
\end{align*}
On the other hand, the terms in $L\eta$ involving $u_{s},u_{t}$ can be written
as
\begin{align*}
&  \left(  -a\left(  \frac{1}{t^{3}}-\frac{1}{s^{3}}\right)  +a\left(
\frac{1}{t}-\frac{1}{s}\right)  \left(  \frac{3}{2s^{2}}+\frac{3}{2t^{2}%
}\right)  -\frac{3}{s^{3}}+\frac{3}{t^{3}}\right)  \left(  u_{s}-u_{t}\right)
\\
&  +\left(  a\left(  \frac{1}{t}-\frac{1}{s}\right)  \left(  \frac{3}{2s^{2}%
}-\frac{3}{2t^{2}}\right)  -\frac{3}{s^{3}}-\frac{3}{t^{3}}\right)  \left(
u_{s}+u_{t}\right)  +6\left(  u_{s}+u_{t}\right)  ^{2}u
\end{align*}
Hence $L\eta-\left(  3+a\right)  \left(  \frac{1}{t^{2}}+\frac{1}{s^{2}%
}\right)  \eta$ is equal to%
\begin{align*}
&  \left(  -a\left(  \frac{1}{t^{3}}-\frac{1}{s^{3}}\right)  +a\left(
\frac{1}{t}-\frac{1}{s}\right)  \left(  \frac{3}{2s^{2}}+\frac{3}{2t^{2}%
}\right)  -\frac{3}{s^{3}}+\frac{3}{t^{3}}-a\left(  3+a\right)  \left(
\frac{1}{t}-\frac{1}{s}\right)  \left(  \frac{1}{t^{2}}+\frac{1}{s^{2}%
}\right)  \right)  \left(  u_{s}-u_{t}\right) \\
&  +\left(  3+a\right)  \left(  \frac{1}{t^{2}}-\frac{1}{s^{2}}\right)
\left(  u_{tt}-u_{ss}\right)  -\left(  \frac{4a}{t^{2}}+\frac{4a}{s^{2}%
}\right)  u_{st}\\
&  +\left(  a\left(  \frac{1}{t}-\frac{1}{s}\right)  \left(  \frac{3}{2s^{2}%
}-\frac{3}{2t^{2}}\right)  -\frac{3}{s^{3}}-\frac{3}{t^{3}}\right)  \left(
u_{s}+u_{t}\right)  +6\left(  u_{s}+u_{t}\right)  ^{2}u.
\end{align*}
The coefficient before $u_{s}-u_{t}$, divided by $\frac{1}{t}-\frac{1}{s}$ is
\begin{align*}
&  \left(  3-a\right)  \left(  \frac{1}{t^{2}}+\frac{1}{st}+\frac{1}{s^{2}%
}\right)  +a\left(  \frac{3}{2s^{2}}+\frac{3}{2t^{2}}\right)  -a\left(
3+a\right)  \left(  \frac{1}{t^{2}}+\frac{1}{s^{2}}\right) \\
&  =\left(  3-a+\frac{3a}{2}-a\left(  3+a\right)  \right)  \left(  \frac
{1}{s^{2}}+\frac{1}{t^{2}}\right)  +\frac{3-a}{st}\\
&  =\left(  3-\frac{5}{2}a-a^{2}\right)  \left(  \frac{1}{s^{2}}+\frac
{1}{t^{2}}\right)  +\frac{3-a}{st}.
\end{align*}
Let us choose $a=3.$ Using Lemma \ref{uss2ust}, we obtain%
\begin{align*}
&  \left(  3+a\right)  \left(  \frac{1}{t^{2}}-\frac{1}{s^{2}}\right)  \left(
u_{tt}-u_{ss}\right)  -\left(  \frac{4a}{t^{2}}+\frac{4a}{s^{2}}\right)
u_{st}\\
&  \leq6\left(  \frac{1}{t^{2}}-\frac{1}{s^{2}}\right)  \left(  -u_{ss}%
-2u_{st}\right) \\
&  \leq18\left(  \frac{1}{t^{2}}-\frac{1}{s^{2}}\right)  \left(  \frac{1}%
{t}-\frac{1}{s}\right)  u_{s}.
\end{align*}
Then Applying the estimate of $u_{s}+u_{t},$ we get
\[
L\eta-6\left(  \frac{1}{t^{2}}+\frac{1}{s^{2}}\right)  \eta\leq0.
\]
By maximum principle, $\eta\geq0.$ This completes the proof.
\end{proof}

\begin{proposition}
In $\Omega,$%
\begin{equation}
u_{st}+u_{ss}+\left(  \frac{1}{t^{2}}-\frac{1}{s^{2}}\right)  \left(  2\left(
u_{s}-u_{t}\right)  +\sqrt{u_{s}-u_{t}}\right)  \geq0. \label{a1}%
\end{equation}

\end{proposition}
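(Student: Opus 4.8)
The plan is to run the same maximum-principle scheme used repeatedly above. Put
\[
g:=\Bigl(\frac{1}{t^{2}}-\frac{1}{s^{2}}\Bigr)\Bigl(2(u_{s}-u_{t})+(u_{s}-u_{t})^{1/2}\Bigr),\qquad \eta:=u_{ss}+u_{st}+g,
\]
so that $\eta$ is exactly the left-hand side of $\left(\ref{a1}\right)$; note that $g$ is twice the quantity $h$ from the proof of Proposition \ref{us+ut}, and that $\left(\ref{bound}\right)$ reads $u_{s}+u_{t}\le g$. It suffices to produce a coefficient $c\ge 0$ in $\Omega$ with $L\eta-c\eta\le 0$ in $\Omega$ and to check that $\eta\ge 0$ on $\partial\Omega$; the conclusion then follows from Theorem \ref{maximum}. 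For the boundary: on the diagonal $\{s=t\}$ the antisymmetry $U(s,t)=-U(t,s)$ forces $u_{st}=0$ and $u_{ss}=-u_{tt}$, and $g=0$ there, so $\eta=-u_{tt}\ge 0$; on $\{t=0\}$ one has $u_{t}=0$ and $u_{s}\ge 0$, hence $g\ge 0$, and in fact $g$ dominates the bounded quantity $u_{ss}+u_{st}$ near this segment, so again $\eta\ge 0$ (one may work in $\Omega\cap\{t>\varepsilon\}$ and let $\varepsilon\to 0$).

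To compute $L\eta$, differentiate $\left(\ref{usut}\right)$ once more, exactly as in the proofs of Lemmas \ref{u-u} and \ref{utust}: this gives $Lu_{ss}=\frac{6}{s^{2}}u_{ss}-\frac{6}{s^{3}}u_{s}+6uu_{s}^{2}$ and $Lu_{st}=\bigl(\frac{3}{s^{2}}+\frac{3}{t^{2}}\bigr)u_{st}+6uu_{s}u_{t}$, so that
\[
L(u_{ss}+u_{st})=\frac{6}{s^{2}}(u_{ss}+u_{st})+\Bigl(\frac{3}{t^{2}}-\frac{3}{s^{2}}\Bigr)u_{st}-\frac{6}{s^{3}}u_{s}+6uu_{s}(u_{s}+u_{t}).
\]
For $Lg$ I would use identity $\left(\ref{alpha}\right)$ from the proof of Proposition \ref{us+ut} with $\alpha=1$ and $\alpha=\tfrac12$, combined with weights $2$ and $1$. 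This expresses $Lg$ through the curvature combinations $(u_{s}-u_{t})^{\alpha-1}\bigl(\frac{4}{s^{3}}(u_{ss}-u_{st})-\frac{4}{t^{3}}(u_{st}-u_{tt})\bigr)$, through $L\bigl((u_{s}-u_{t})^{\alpha}\bigr)$, which carries the sign-indefinite term $\alpha(\alpha-1)(u_{s}-u_{t})^{\alpha-2}|\nabla(u_{s}-u_{t})|^{2}$ from the $\alpha=\tfrac12$ branch, and through lower-order pieces.

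The choice of $c$ is the heart of the matter. I would take $c=\frac{6}{s^{2}}$, possibly augmented by a nonnegative correction supported in $\{s\le 2t\}$, in the spirit of Lemma \ref{uss2ust}. After cancelling $\frac{6}{s^{2}}(u_{ss}+u_{st})$, the residual $L\eta-c\eta$ splits into: (i) the cross term $\bigl(\frac{3}{t^{2}}-\frac{3}{s^{2}}\bigr)u_{st}$, to be absorbed via Lemma \ref{uss2ust}, which bounds $u_{ss}+2u_{st}$ from below by a multiple of $-\bigl(\frac{1}{t}-\frac{1}{s}\bigr)u_{s}$ and thus lets one trade $u_{st}$ against $u_{ss}$ and $u_{s}$; (ii) the curvature combination, handled with $\left(\ref{uss-utt}\right)$, the equation $\left(\ref{AC1}\right)$, and the bounds $0<u<1$ and $\frac{3}{s}u_{s}+\frac{3}{t}u_{t}\le 0$; (iii) the singular gradient term, bounded from below by $\left(\ref{nablaus-ut}\right)$; (iv) the terms proportional to $u_{s}+u_{t}$, such as $6uu_{s}(u_{s}+u_{t})$ and its analogue arising from $Lg$, which by Proposition \ref{us+ut} are dominated by a multiple of $g$ and hence can be fed back into $c\eta$, using also the Modica estimate $\left(\ref{modica}\right)$ in the form $1-u^{2}\ge\sqrt2\,u_{s}\ge u_{s}-u_{t}$ and Lemmas \ref{uus} and \ref{us/s} to sign the leftover $u_{ss}$ contributions. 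One then argues by contradiction: on the open set $\{\eta<0\}$ one checks $L\eta-c\eta\le 0$, which contradicts Theorem \ref{maximum} unless this set is empty.

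I expect the main obstacle to be the same trade-off flagged inside the proof of Proposition \ref{us+ut}: with the naive multiplier, the $\alpha=1$ part of $Lg-cg$ has the favorable sign only near the Simons cone, while the $\alpha=\tfrac12$ part has it only away from the cone, so the constants $2$ and $1$ in $g$ together with $c$ must be tuned so that the two regimes overlap. The genuinely delicate estimate will be the curvature term (ii) together with the singular gradient term (iii) in the region where $t$ is small, where $u_{t}/t^{3}$-type quantities blow up, precisely the difficulty pointed out just before Lemma \ref{usus}; here I anticipate needing the dichotomy $\{s\le 2t\}$ versus $\{s>2t\}$ of Lemma \ref{uss2ust}, treating the sign of $u_{s}+2u_{t}$ separately on each piece.
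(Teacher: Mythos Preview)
Your framework is the same as the paper's: set $\eta=u_{ss}+u_{st}+g$, take $c=\frac{6}{s^{2}}$, check the boundary (your boundary argument is correct), and argue by contradiction on the set $\Gamma=\{\eta<0\}$. The computation of $L(u_{ss}+u_{st})-\frac{6}{s^{2}}(u_{ss}+u_{st})$ is also exactly the paper's starting point.

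The gap is in your step (i), the handling of the cross term $\bigl(\frac{3}{t^{2}}-\frac{3}{s^{2}}\bigr)u_{st}$. This term is positive, so you need an \emph{upper} bound on $u_{st}$ to absorb it; but Lemma \ref{uss2ust} gives only a \emph{lower} bound $u_{ss}+2u_{st}\ge -3(\tfrac{1}{t}-\tfrac{1}{s})u_{s}$, so the ``trade'' you describe goes the wrong way. Likewise, your proposed use of $(\ref{nablaus-ut})$ in step (iii) bounds $|\nabla(u_{s}-u_{t})|^{2}$ only in terms of $u,u_{s},u_{t}$, so the negative $\alpha(\alpha-1)(u_{s}-u_{t})^{\alpha-2}|\nabla(u_{s}-u_{t})|^{2}$ contribution in $Lg$ cannot see $u_{st}$ at all and thus cannot cancel the cross term. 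The paper's key move, which you do not invoke, is to \emph{use the defining inequality of $\Gamma$} itself: on $\Gamma$ one has $u_{ss}<-u_{st}-g$, hence $u_{ss}-u_{st}<-(2u_{st}+J)$ with $J>0$, and together with $u_{tt}\le 0$ this yields
\[
|\nabla(u_{s}-u_{t})|^{2}=(u_{ss}-u_{st})^{2}+(u_{st}-u_{tt})^{2}\ge (2u_{st}+J)^{2}+u_{st}^{2}=5u_{st}^{2}+4Ju_{st}+J^{2}.
\]
It is precisely this quadratic-in-$u_{st}$ lower bound, fed into the $\alpha=\tfrac12$ branch of $Lg$, that dominates the linear cross term $\bigl(\frac{3}{t^{2}}-\frac{3}{s^{2}}\bigr)u_{st}$. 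The paper also uses the $\Gamma$-constraint to sharpen the curvature bound in (ii), replacing $(\ref{uss-utt})$ by the stronger estimate $\frac{1}{s^{3}}(u_{ss}-u_{st})-\frac{1}{t^{3}}(u_{st}-u_{tt})\le -(\frac{2}{s^{3}}+\frac{1}{t^{3}})u_{st}+\frac{1}{t^{3}}u_{tt}-J$.

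A second missing ingredient is the exponential decay of $u_{s}$ from Lemma \ref{decayofus}. After the $u_{st}^{2}$ absorption above, the remaining terms are first-order in $u_{s},u_{t}$, and the paper closes by invoking this decay; your list of tools (Modica, Lemmas \ref{uus}, \ref{us/s}, Proposition \ref{us+ut}) does not include it and would not obviously suffice far from the Simons cone.
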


\begin{proof}
The functions $u_{ss}$ and $u_{st}$ satisfy%
\begin{equation}
Lu_{ss}=\frac{6}{s^{2}}u_{ss}-\frac{6}{s^{3}}u_{s}+6u_{s}^{2}u, \label{uss}%
\end{equation}%
\begin{equation}
Lu_{st}=\left(  \frac{3}{t^{2}}+\frac{3}{s^{2}}\right)  u_{st}+6u_{s}u_{t}u.
\label{ust}%
\end{equation}
Hence
\begin{align*}
&  L\left(  u_{ss}+u_{st}\right)  -\frac{6}{s^{2}}\left(  u_{ss}+u_{st}\right)
\\
&  =\left(  \frac{3}{t^{2}}-\frac{3}{s^{2}}\right)  u_{st}-\frac{6u_{s}}%
{s^{3}}+6u_{s}u\left(  u_{s}+u_{t}\right)  .
\end{align*}
Let $\eta=\left(  \frac{1}{t^{2}}-\frac{1}{s^{2}}\right)  \left(  u_{s}%
-u_{t}+\left(  u_{s}-u_{t}\right)  ^{\alpha}\right)  .$ Then same computation
as before yields%
\begin{align*}
&  L\left(  \eta+u_{st}+u_{ss}\right)  -\frac{6}{s^{2}}\left(  \eta
+u_{ss}+u_{st}\right) \\
&  =\left(  1+\frac{1}{2}\alpha\left(  u_{s}-u_{t}\right)  ^{\alpha-1}\right)
\left(  \frac{4}{s^{3}}\left(  u_{ss}-u_{st}\right)  -\frac{4}{t^{3}}\left(
u_{st}-u_{tt}\right)  \right) \\
&  \ +\left(  \frac{1}{t^{2}}-\frac{1}{s^{2}}\right)  L\left(  u_{s}%
-u_{t}+\frac{1}{2}\left(  u_{s}-u_{t}\right)  ^{\alpha}\right) \\
&  +\left(  \frac{3}{t^{2}}-\frac{3}{s^{2}}\right)  u_{st}-\frac{6u_{s}}%
{s^{3}}+6u_{s}u\left(  u_{s}+u_{t}\right)  -\frac{6}{s^{2}}\eta.
\end{align*}
In the region $\Gamma$ where $\left(  \ref{a1}\right)  $ is not true, we have%
\begin{align*}
&  \frac{1}{s^{3}}\left(  u_{ss}-u_{st}\right)  -\frac{1}{t^{3}}\left(
u_{st}-u_{tt}\right) \\
&  \leq-\left(  \frac{2}{s^{3}}+\frac{1}{t^{3}}\right)  u_{st}+\frac{1}{t^{3}%
}u_{tt}\\
&  -\frac{1}{s^{3}}\left(  \frac{1}{t^{2}}-\frac{1}{s^{2}}\right)  \left(
u_{s}-u_{t}+\sqrt{u_{s}-u_{t}}\right)  .
\end{align*}
Denote
\[
J:=\frac{1}{s^{3}}\left(  \frac{1}{t^{2}}-\frac{1}{s^{2}}\right)  \left(
u_{s}-u_{t}+\frac{1}{2}\sqrt{u_{s}-u_{t}}\right)  .
\]
Using the fact that $u_{tt}\leq0,$ we also have
\begin{align*}
\left\vert \nabla\left(  u_{s}-u_{t}\right)  \right\vert ^{2}  &  =\left(
u_{ss}-u_{st}\right)  ^{2}+\left(  u_{st}-u_{tt}\right)  ^{2}\\
&  \geq\left(  2u_{st}+J\right)  ^{2}+u_{st}^{2}\\
&  =5u_{st}^{2}+4u_{st}J+J^{2}.
\end{align*}
Then applying the decay of $u_{s}$, we get
\[
L\left(  \eta+u_{st}+u_{ss}\right)  -\frac{6}{s^{2}}\left(  \eta+u_{ss}%
+u_{st}\right)  \leq0.
\]
This implies that $\eta+u_{st}+u_{ss}\geq0.$
\end{proof}

\begin{lemma}
\label{lemma22} \bigskip$u_{s}u+u_{ss}\geq0,$ in $\Omega.$
\end{lemma}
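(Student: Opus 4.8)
The plan is to prove the inequality $u_s u + u_{ss}\geq 0$ in $\Omega$ via the maximum principle of Theorem \ref{maximum}, using as auxiliary functions the quantities already controlled in this section. We know from Lemma \ref{uus} that $\sqrt{2}\,u_s u + u_{ss}\geq 0$, so only the region where $(\sqrt2-1)u_s u$ fails to dominate $u_{ss}$ from below --- i.e.\ where $u_{ss}<0$ and $u_s u$ is not too large --- is in question. The idea is to set
\[
\eta := u_s u + u_{ss} + w,
\]
where $w\geq 0$ is a small correction built from the functions $\left(\tfrac{1}{t^2}-\tfrac1{s^2}\right)\bigl(2(u_s-u_t)+\sqrt{u_s-u_t}\bigr)$ and the decaying bounds for $u_s$ (Lemma \ref{decayofus}, Proposition \ref{us+ut}) that lets us close the sign condition on $\partial\Omega$ and in the ``bad'' region. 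First I would compute, exactly as in the proof of Lemma \ref{uus},
\[
L(u_s u) = \tfrac{3}{s^2}u_s u + 2u_{ss}u_s + 2u_{st}u_t + u_s(u^3-u),
\]
and combine it with $Lu_{ss}=\tfrac{6}{s^2}u_{ss}-\tfrac{6}{s^3}u_s+6u_s^2 u$ (equation \eqref{uss}) to get, after subtracting $\bigl(2u_s+\tfrac{6}{s^2}\bigr)\eta$ or a similar multiple, an expression in which the leading troublesome terms are $2u_{st}u_t$, $-\tfrac{6}{s^3}u_s$, and the nonlinear remainder $(6-2)u_s^2 u + u_s(u^3-u)$; using Modica's estimate $1-u^2\geq\sqrt2\,u_s$ the last batch is $\leq 0$.

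The remaining obstruction is the cross term $2u_{st}u_t$, which is \emph{negative} (since $u_{st}\geq 0$, $u_t\leq 0$) --- good for us --- together with the term $-\tfrac{6}{s^3}u_s<0$; these help, but the correction $w$ must be chosen so that $Lw$ does not reintroduce a positive term bigger than what we gained, and so that $\eta\geq 0$ on $\partial\Omega$ (where $u_{ss}=-u_{yz}\cdot$const can be negative). The key point is Lemma \ref{utust}, $\sqrt2\,u_t u + u_{st}\leq 0$, which converts $2u_{st}u_t$ into $-2\sqrt2\,u_t^2 u + 2u_t(u_{st}+\sqrt2 u_t u)$, i.e.\ a manifestly good sign; and the boundary behavior is handled by Proposition \ref{us+ut} and Lemma \ref{uss2ust}, which bound $u_s+u_t$ and $u_{ss}+2u_{st}$ by multiples of $\left(\tfrac1{t^2}-\tfrac1{s^2}\right)(\cdots)$ and $\left(\tfrac1t-\tfrac1s\right)u_s$, so that $w$ of the form $c\left(\tfrac{1}{t^2}-\tfrac{1}{s^2}\right)\bigl(2(u_s-u_t)+\sqrt{u_s-u_t}\bigr)$ with $c$ a fixed admissible constant makes $L\eta - c(s,t)\eta\leq 0$ throughout $\Omega$ for some $c(s,t)\geq 0$, while $\eta\geq 0$ on $\partial\Omega$.

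Concretely the steps are: (i) write $L\eta-\bigl(2u_s+\tfrac{6}{s^2}\bigr)\eta$ and collect terms as in Lemma \ref{uus}; (ii) use Lemma \ref{utust} to replace $2u_{st}u_t$ by a non-positive quantity plus lower-order terms; (iii) use Modica's estimate to kill the nonlinear block; (iv) absorb the leftover positive contributions coming from $Lw$ into the good terms, using the decay estimates of Lemma \ref{decayofus}, Proposition \ref{us+ut} and Lemma \ref{uss2ust} --- this is the computational heart of the argument; (v) verify $\eta\geq0$ on $\partial\Omega$ using $u_{ss}\geq0$ near the origin and the boundary identity $u_{ss}=-u_{yz}$ on the Simons cone together with $w\geq0$; (vi) conclude $\eta\geq 0$ by Theorem \ref{maximum}, hence $u_s u + u_{ss}\geq -w$, and finally remark that the previously established lower bounds force $w$ to be absorbable, giving the sharp inequality $u_s u + u_{ss}\geq 0$. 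I expect step (iv) --- choosing the correction $w$ and the admissible constants so that all the error terms near the $s$-axis (where $u_t/t^3$ and $1/s^3$ blow up or decay competes) have the right sign --- to be the main obstacle, exactly the kind of delicate point flagged in the remarks after Lemma \ref{ust+uss}.
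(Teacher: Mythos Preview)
Your plan has a genuine gap at step (iii), and it is exactly the step that distinguishes Lemma \ref{lemma22} from Lemma \ref{uus}. With $\eta=u_su+u_{ss}$ and the potential $2u_s+\tfrac{6}{s^2}$, the computation of Lemma \ref{uus} (with $a=1$) gives the nonlinear block $4u_s^2u+u_s(u^3-u)$. Using only the Modica bound $1-u^2\geq\sqrt{2}\,u_s$ yields $4u_s^2u+u_s(u^3-u)\leq(4-\sqrt{2})u_s^2u$, which is \emph{positive}, not $\leq0$; this is precisely why $a=\sqrt{2}$ (and not $a=1$) closes in Lemma \ref{uus}. So the ``good'' terms $2u_{st}u_t\leq0$ and $-\tfrac{6}{s^3}u_s\leq0$ are not enough, and your correction $w$ --- designed for boundary/decay issues --- is not set up to absorb a term of size $u_s^2u$.

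The paper's proof handles this without any auxiliary $w$: it enlarges the subtracted potential to $2u_s-2u_t+\tfrac{6}{s^2}$ (still nonnegative since $u_t\leq0$), which rewrites the dangerous block as
\[
2(u_{st}+u_{ss})u_t+2(u_s+u_t)u_su+2u_s^2u+u_s(u^3-u),
\]
and then uses the \emph{full} Modica inequality $1-u^2\geq\sqrt{2(u_s^2+u_t^2)}$ (not just $\geq\sqrt{2}u_s$) to turn $2u_s^2u+u_s(u^3-u)$ into a quantity controlled by $u_s+u_t$. The new term $2(u_{st}+u_{ss})u_t$ has the wrong sign unless one has a lower bound for $u_{st}+u_{ss}$; this is supplied by the proposition giving \eqref{a1}, together with the bootstrapped bound \eqref{us+utnew} on $u_s+u_t$ obtained by feeding Lemma \ref{ust+uss} back into Proposition \ref{us+ut}. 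These two ingredients --- the shift by $-2u_t$ and the sharpened Modica --- are the missing ideas in your sketch.

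Finally, step (vi) as written is a logical gap: from $u_su+u_{ss}\geq -w$ with $w>0$ you cannot conclude $u_su+u_{ss}\geq0$ by ``absorbing'' $w$ afterwards; the maximum-principle argument must be run with $\eta=u_su+u_{ss}$ itself (as the paper does), not with $\eta+w$.
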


\begin{proof}
Let $\eta=u_{s}u+u_{ss}.$ The computation in Lemma \ref{uus} tells us that
\begin{align*}
&  L\eta-\left(  2u_{s}+\frac{6}{s^{2}}\right)  \eta\\
&  =-\frac{3}{s^{2}}u_{s}u+2u_{st}u_{t}-\frac{6}{s^{3}}u_{s}\\
&  +4u_{s}^{2}u+u_{s}\left(  u^{3}-u\right)  .
\end{align*}
This can be written as
\begin{align*}
&  L\eta-\left(  2u_{s}-2u_{t}+\frac{6}{s^{2}}\right)  \eta\\
&  =-\frac{3}{s^{2}}u_{s}u+2\left(  u_{st}+u_{ss}\right)  u_{t}+2\left(
u_{s}+u_{t}\right)  u_{s}u-\frac{6}{s^{3}}u_{s}\\
&  +2u_{s}^{2}u+u_{s}\left(  u^{3}-u\right)  .
\end{align*}
Note that by Modica estimate,
\begin{align*}
1-u^{2}  &  \geq\sqrt{2\left(  u_{s}^{2}+u_{t}^{2}\right)  }.\\
&  =2u_{s}+\sqrt{2\left(  u_{s}^{2}+u_{t}^{2}\right)  }-2u_{s}\\
&  =2u_{s}+\frac{2\left(  u_{t}^{2}-u_{s}^{2}\right)  }{\sqrt{2\left(
u_{s}^{2}+u_{t}^{2}\right)  }+2u_{s}}.
\end{align*}
We then deduce
\begin{align*}
L\eta &  \leq-\frac{3}{s^{2}}u_{s}u+2\left(  u_{st}+u_{ss}\right)
u_{t}+2\left(  u_{s}+u_{t}\right)  u_{s}u-\frac{6}{s^{3}}u_{s}\\
&  +u_{s}u\frac{2\left(  u_{s}^{2}-u_{t}^{2}\right)  }{\sqrt{2\left(
u_{s}^{2}+u_{t}^{2}\right)  }+2u_{s}}.
\end{align*}
Now inserting the estimate of $u_{st}$ obtained in Lemma \ref{ust+uss} into
the proof of Proposition \ref{us+ut}, we can bootstrap the estimate of
$u_{s}+u_{t}$ to
\begin{equation}
u_{s}+u_{t}\leq\left(  \frac{1}{t^{2}}-\frac{1}{s^{2}}\right)  \left(
u_{s}-u_{t}+\frac{1}{2}\sqrt{u_{s}-u_{t}}\right)  . \label{us+utnew}%
\end{equation}
Applying these estimates, we obtain $L\eta\leq0.$ Hence $\eta\geq0.$
\end{proof}

Similarly, we have the following

\begin{lemma}
\label{utu+ust} \bigskip$-u_{t}u-u_{st}\geq0,$ in $\Omega.$
\end{lemma}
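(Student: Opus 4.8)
The plan is to follow the scheme used in the proof of Lemma~\ref{lemma22}, with the roles of $u_s,u_{ss}$ now played by $u_t,u_{st}$, so as to lower the coefficient $\sqrt2$ of Lemma~\ref{utust} down to $1$. Set $\eta:=u_tu+u_{st}$. Since $u=0$ on the Simons cone $\{s=t\}$ and, by the antisymmetry $U(s,t)=-U(t,s)$, also $u_{st}=0$ there, while $u_t=u_{st}=0$ on $\{t=0\}$, we have $\eta=0$ on $\partial\Omega$; so it suffices to produce a function $c\ge0$ in $\Omega$ with $L\eta-c\eta\ge0$ in $\Omega$, for then the maximum principle (Theorem~\ref{maximum}) gives $\eta\le0$, which is the claim.

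First I would reuse the computation in the proof of Lemma~\ref{utust}, with the parameter $a$ there equal to $1$: using $Lu=-2u^3$, \eqref{usut} and \eqref{ust}, and setting $c_0:=2u_s+\frac{3}{t^2}+\frac{3}{s^2}\ge0$, one obtains
\[
L\eta-c_0\eta=2u_{tt}u_t+u_t(u^3-u)+4u_su_tu-\frac{3}{s^2}u_tu.
\]
Because $u_t\le0$, $u_{tt}\le0$ and $0<u<1$ in $\Omega$, the first and last terms are $\ge0$; the obstruction is the pair $u_t(u^3-u)+4u_su_tu=-u_tu\bigl((1-u^2)-4u_s\bigr)$, which is precisely the difficulty already met in Lemma~\ref{lemma22}, since the bare Modica inequality only gives $1-u^2\ge\sqrt2\,u_s$, not enough against $4u_s$ near the cone. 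To handle it I would, exactly as there, merge $2u_{tt}u_t$ with the $u_{st}$ already present in $\eta$ by adjusting the coefficient: take $c:=c_0-2u_t=2u_s-2u_t+\frac{3}{t^2}+\frac{3}{s^2}\ (\ge0$ in $\Omega)$, which, after absorbing the resulting $2u_t^2u$ into the cubic term, gives
\[
L\eta-c\eta=2(u_{tt}+u_{st})u_t-u_tu\bigl((1-u^2)-4u_s-2u_t\bigr)-\frac{3}{s^2}u_tu.
\]

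Then I would estimate the three pieces. For the middle one I would use the refined Modica inequality
\[
1-u^2\ \ge\ \sqrt{2(u_s^2+u_t^2)}=2u_s+\frac{2(u_t^2-u_s^2)}{\sqrt{2(u_s^2+u_t^2)}+2u_s},
\]
together with $u_t^2\le u_s^2$ and $u_s-u_t\le2u_s$ (both coming from the monotonicity $u_s+u_t\ge0$), to reduce $-u_tu\bigl((1-u^2)-4u_s-2u_t\bigr)$ to a controlled multiple of $u_tu(u_s+u_t)$, and then bound the decaying factor $u_s+u_t$ by the bootstrapped estimate \eqref{us+utnew} and $|u_t|$ by $u_s$ and, away from the cone, by Lemma~\ref{decayofus}. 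For $2(u_{tt}+u_{st})u_t$ I would use Lemma~\ref{ut/t}: where $u_{tt}+u_{st}\le0$ this term is $\ge0$; where $u_{tt}+u_{st}>0$, Lemma~\ref{utust} gives $u_{tt}+u_{st}\le\sqrt2\,|u_t|u$, hence $2(u_{tt}+u_{st})u_t\ge-2\sqrt2\,u_t^2u$, again with a factor $u$ that is small near the cone (the estimates of $u_{st},u_{ss}$ from Lemmas~\ref{ust+uss} and \ref{lemma22} help pin down this region). Finally I would split $\Omega$ into a neighbourhood of the Simons cone and its complement, in the spirit of Lemmas~\ref{decayofus} and \ref{uss2ust}: near the cone one uses that $u_s<\frac12$ (since $u<u^{\ast}=H(y)H(z)$), so that $(1-u^2)-4u_s-2u_t$ is nonnegative to leading order in the distance to the cone; away from the cone $1-u^2,u_s,|u_t|,u_{st},u_{tt}$ all decay exponentially, so the $\frac{1}{s^2}$-, $\frac{1}{s^3}$- and $\frac{1}{t^2}$-type coefficients cause no harm. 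Collecting these bounds yields $L\eta-c\eta\ge0$ in $\Omega$, whence $\eta\le0$ by Theorem~\ref{maximum}.

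The hard part will be the bookkeeping of competing small quantities near the Simons cone: to leading order in the distance to the cone, both $2(u_{tt}+u_{st})u_t$ and the Modica remainder $-u_tu\bigl((1-u^2)-4u_s-2u_t\bigr)$ are controlled by the one-dimensional transverse profile and nearly cancel in size, so the inequality $L\eta-c\eta\ge0$ survives there only by a margin coming from the $O(y^{-2})$ corrections to that profile; and near $\{t=0\}$ the apparent blow-up of $u_t/t^2$ must be absorbed using $u_t=O(t)$ together with the exponential smallness of $1-u$, $u_s$ and $u_t$ there. As in the proof of Lemma~\ref{lemma22}, I expect this balancing — rather than any new idea — to be the crux.
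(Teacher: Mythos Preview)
Your approach is essentially the one the paper has in mind: the paper says only ``The proof of this lemma is almost identical to that of Lemma~\ref{lemma22}. We omit the details,'' and your computation with $\eta=u_tu+u_{st}$, the shift to $c=2u_s-2u_t+\tfrac{3}{t^2}+\tfrac{3}{s^2}$, and the resulting identity
\[
L\eta-c\eta=2(u_{tt}+u_{st})u_t-u_tu\bigl((1-u^2)-4u_s-2u_t\bigr)-\tfrac{3}{s^2}u_tu
\]
is exactly the mirror of the Lemma~\ref{lemma22} computation, with the same refined Modica step isolating a factor $u_s+u_t$.

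One remark on the bound you choose for $u_{tt}+u_{st}$. Using Lemma~\ref{utust} to get $u_{tt}+u_{st}\le\sqrt2\,|u_t|u$ introduces a lower bound $2(u_{tt}+u_{st})u_t\ge-2\sqrt2\,u_t^2u$ in which the error carries only a factor $u$, not $u_s+u_t$; after the Modica step the combined expression then contains a residual term $2\sqrt2\,u_t$ that is \emph{not} small near the cone and is not obviously absorbed by the available $\tfrac{3}{s^2}u$. The cleaner parallel to Lemma~\ref{lemma22} (where one used the lower bound \eqref{a1} on $u_{ss}+u_{st}$) is to use instead the already–proved inequality $2(u_s+u_t)-u_{st}-u_{tt}\ge0$, i.e.\ $u_{tt}+u_{st}\le 2(u_s+u_t)$, giving $2(u_{tt}+u_{st})u_t\ge 4(u_s+u_t)u_t$. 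Then every potentially negative piece carries the factor $u_s+u_t$, and the bootstrapped bound \eqref{us+utnew} closes the argument just as in Lemma~\ref{lemma22}. With this substitution your plan goes through without the region-splitting you anticipate. (Your reference to Lemma~\ref{ut/t} at that point is spurious; it gives a lower, not an upper, bound on $u_{tt}+u_{st}$.)
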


The proof of this lemma is almost identical to that of Lemma \ref{lemma22}. We
omit the details. Next we would like to prove the following

\begin{lemma}
\label{3/2uss+ust} In $\Omega,$ we have%
\[
\frac{3}{2}\left(  \frac{1}{t}-\frac{1}{s}\right)  u_{s}+u_{ss}+u_{st}\geq0.
\]

\end{lemma}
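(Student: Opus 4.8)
The plan is to prove the inequality by the maximum principle of Theorem~\ref{maximum}, applied to the member $a=\tfrac{3}{2}$, $d=1$ of the two-parameter family of test functions from the proof of Lemma~\ref{uss2ust}. Set
\[
\eta:=\frac{3}{2}\left(\frac{1}{t}-\frac{1}{s}\right)u_{s}+u_{ss}+u_{st}.
\]
On the Simons cone $\{s=t\}$ the antisymmetry $U(s,t)=-U(t,s)$ gives $u_{st}=0$ and $u_{ss}=-u_{tt}$, so $\eta=u_{ss}$ there; since moreover $u_{s}+u_{t}=0$ on $\{s=t\}$, the inequality $4(u_{s}+u_{t})+u_{ss}-u_{tt}\geq0$ obtained above forces $u_{ss}\geq0$ on $\{s=t\}$, hence $\eta\geq0$ there. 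Near the $s$-axis the term $\tfrac{3}{2}t^{-1}u_{s}$ dominates and is nonnegative, while $\eta\to0$ at infinity, so the boundary values cause no difficulty, just as in Lemma~\ref{uss2ust}.

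Next I would run the computation of Lemma~\ref{uss2ust} with these parameters: using $\left(\ref{usut}\right)$, $\left(\ref{uss}\right)$, $\left(\ref{ust}\right)$, and $\Delta(s^{-1})=-s^{-3}$, $\Delta(t^{-1})=-t^{-3}$, and subtracting the nonnegative weight $\tfrac{6+2a}{s^{2}}=\tfrac{9}{s^{2}}$, one obtains
\[
L\eta-\frac{9}{s^{2}}\eta=A\,u_{s}+B\,u_{st}+6u_{s}(u_{s}+u_{t})u,
\]
\[
A=-\frac{9}{s^{2}}\left(\frac{1}{t}-\frac{1}{s}\right)+\frac{3}{2}\left(\frac{1}{s^{3}}-\frac{1}{t^{3}}\right)-\frac{6}{s^{3}},\qquad B=-\frac{6}{s^{2}} .
\]
In $\Omega$ each of the three terms of $A$ is negative and $B<0$; since $u_{s}>0$, $u_{st}\geq0$ there, the first two terms on the right are $\leq0$, and it remains to prove
\[
6(u_{s}+u_{t})\,u\ \leq\ -A=\frac{9}{s^{2}}\left(\frac{1}{t}-\frac{1}{s}\right)+\frac{3}{2}\left(\frac{1}{t^{3}}-\frac{1}{s^{3}}\right)+\frac{6}{s^{3}} .
\]

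To establish this I would feed in the bootstrapped bound $\left(\ref{us+utnew}\right)$ for $u_{s}+u_{t}$, together with the consequences $u_{s}-u_{t}\leq1-u^{2}$ and $u_{s}\leq\tfrac{1}{\sqrt{2}}(1-u^{2})$ of the Modica estimate $\left(\ref{modica}\right)$. Extracting the extra factor $u_{s}$, these bound $6u_{s}(u_{s}+u_{t})u$ by an absolute constant times $u\left[(1-u^{2})^{2}+\tfrac{1}{2}(1-u^{2})^{3/2}\right]\left(\tfrac{1}{t^{2}}-\tfrac{1}{s^{2}}\right)$, hence by $C_{0}\left(\tfrac{1}{t}-\tfrac{1}{s}\right)\left(\tfrac{1}{t}+\tfrac{1}{s}\right)$ for an absolute constant $C_{0}$. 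Near the Simons cone, and near the origin, the smallness of $u$ (near the origin $u$ is even quadratically small, while the $t^{-3}$-term of $-A$ blows up) makes this dominated by the $\tfrac{6}{s^{3}}$- and $\tfrac{9}{s^{2}}(\tfrac{1}{t}-\tfrac{1}{s})$-terms of $-A$; in the complementary region, where $z=\tfrac{s-t}{\sqrt{2}}$ is bounded away from $0$, the crude Modica bound is too wasteful, and there I would instead control $1-u^{2}$ through the lower bound $u\geq u^{\ast}-\phi=H(y)H(z)-\phi$ with the estimate $\left(\ref{g}\right)$ for $\phi$, and use the exponential decay of $u_{s}$ from Lemma~\ref{decayofus}. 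Patching the two regimes gives $6(u_{s}+u_{t})u\leq-A$, so $L\eta-\tfrac{9}{s^{2}}\eta\leq0$ in $\Omega$, and Theorem~\ref{maximum} yields $\eta\geq0$.

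The step I expect to be the main obstacle is exactly this last region analysis: near the cone one needs $u$ and $u_{s}+u_{t}$ to be small, far from it one needs $u$ close to $1$ and $u_{s}$ exponentially small, but in the intermediate band neither estimate is sharp, so one has to combine $\left(\ref{us+utnew}\right)$, the $\phi$-estimate $\left(\ref{g}\right)$ and Lemma~\ref{decayofus} quantitatively — which, as elsewhere in this section, comes down to checking the sign of explicit one-variable majorants with suitable numerical constants.
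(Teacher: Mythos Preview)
Your overall setup is fine and coincides with the paper's: set $\eta=a(\tfrac{1}{t}-\tfrac{1}{s})u_s+u_{ss}+u_{st}$ with $a=\tfrac{3}{2}$, compute
\[
L\eta-\frac{9}{s^{2}}\eta=A\,u_{s}-\frac{6}{s^{2}}u_{st}+6u_{s}u(u_{s}+u_{t}),
\]
and apply Theorem~\ref{maximum}. The gap is in the next step: you discard the $-\tfrac{6}{s^{2}}u_{st}$ term (using only $u_{st}\ge 0$) and reduce to proving $6(u_{s}+u_{t})u\le -A$. This inequality is \emph{false} in $\Omega$. Along $s=t+c$ with $c>0$ fixed and $t\to\infty$ one has $-A\sim 6t^{-3}$, while $(\tfrac{1}{t^{2}}-\tfrac{1}{s^{2}})\sim 2ct^{-3}$, $u\to H(c/\sqrt{2})$, $u_{s}-u_{t}\to 1-H^{2}$; so via \eqref{us+utnew} the upper bound for $6(u_{s}+u_{t})u$ is asymptotic to $12cH(1-H^{2}+\tfrac{1}{2}\sqrt{1-H^{2}})\,t^{-3}$. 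For $c=1$ (i.e.\ $z_{0}=1/\sqrt{2}$, $H\approx0.46$) this coefficient is $\approx1.14>1$, and it is even larger for $c=2,3$. Hence the target inequality fails in a whole strip parallel to the Simons cone; no amount of patching with \eqref{g} or Lemma~\ref{decayofus} can repair this, since those are sharp in this regime.

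What is missing is a \emph{quantitative lower bound on $u_{st}$}. Near the cone at large $y$ the term $-\tfrac{6}{s^{2}}u_{st}$ is actually the dominant negative contribution (it is $O(y^{-2})$, whereas $Au_{s}$ and $6u_{s}u(u_{s}+u_{t})$ are $O(y^{-3})$), so throwing it away is fatal. The paper keeps it and feeds in Lemma~\ref{ust+uss}, which together with the equation $u_{ss}+u_{tt}=-\tfrac{3u_{s}}{s}-\tfrac{3u_{t}}{t}+u^{3}-u$ yields a lower bound of the form $u_{st}\ge \tfrac{1}{2}u(1-u^{2})+\text{(lower order)}$; this produces the extra negative term $-\tfrac{3}{s^{2}}u(1-u^{2})$ which controls $6u_{s}u(u_{s}+u_{t})$ once one invokes \eqref{us+utnew}. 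So the fix is: do not discard $Bu_{st}$, bound $u_{st}$ from below via Lemma~\ref{ust+uss} (or, equivalently, via Lemma~\ref{utu+ust}, which gives $u_{st}\ge |u_{t}|u$), and only then balance the remaining $O(y^{-3})$ terms.
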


\begin{proof}
Let
\[
\eta=a\left(  \frac{1}{t}-\frac{1}{s}\right)  u_{s}+u_{ss}+u_{st}.
\]
We compute
\begin{align*}
L\eta &  =\frac{3a}{s^{2}}\left(  \frac{1}{t}-\frac{1}{s}\right)  u_{s}%
-\frac{2a}{t^{2}}u_{st}+\frac{2a}{s^{2}}u_{ss}+a\left(  \frac{1}{s^{3}}%
-\frac{1}{t^{3}}\right)  u_{s}\\
&  +\frac{6}{s^{2}}u_{ss}-\frac{6}{s^{3}}u_{s}+6u_{s}^{2}u+\left(  \frac
{3}{s^{2}}+\frac{3}{t^{2}}\right)  u_{st}+6u_{s}u_{t}u.
\end{align*}
Then
\begin{align*}
&  L\eta-\frac{6+2a}{s^{2}}\eta\\
&  =\left(  -\frac{6+2a}{s^{2}}a\left(  \frac{1}{t}-\frac{1}{s}\right)
+\frac{3a}{s^{2}}\left(  \frac{1}{t}-\frac{1}{s}\right)  +a\left(  \frac
{1}{s^{3}}-\frac{1}{t^{3}}\right)  -\frac{6}{s^{3}}\right)  u_{s}\\
&  +\left(  -\frac{6+2a}{s^{2}}-\frac{2a}{t^{2}}+\frac{3}{s^{2}}+\frac
{3}{t^{2}}\right)  u_{st}+6u_{s}\left(  u_{s}+u_{t}\right)  u.
\end{align*}
The right hand side is equal to
\begin{align*}
&  \left(  \left(  \frac{1}{t}-\frac{1}{s}\right)  \frac{-3a-2a^{2}}{s^{2}%
}+a\left(  \frac{1}{s^{3}}-\frac{1}{t^{3}}\right)  -\frac{6}{s^{3}}\right)
u_{s}\\
&  +\left(  -\frac{6+2a}{s^{2}}-\frac{2a}{t^{2}}+\frac{3}{s^{2}}+\frac
{3}{t^{2}}\right)  u_{st}+6u_{s}\left(  u_{s}+u_{t}\right)  u\\
&  =\left(  \left(  \frac{1}{t}-\frac{1}{s}\right)  \frac{-3a-2a^{2}}{s^{2}%
}+a\left(  \frac{1}{s^{3}}-\frac{1}{t^{3}}\right)  -\frac{6}{s^{3}}\right)
u_{s}\\
&  +\left(  \frac{-3-2a}{s^{2}}+\frac{3-2a}{t^{2}}\right)  u_{st}%
+6u_{s}\left(  u_{s}+u_{t}\right)  u.
\end{align*}
Hence applying Lemma \ref{ust+uss} and $\left(  \ref{us+utnew}\right)  ,$ we
get
\[
L\eta-\frac{9}{s^{2}}\eta\leq0.
\]
By maximum principle, $\eta\geq0.$ The proof is finished.
\end{proof}

Similarly, we have

\begin{lemma}
\label{3/2st+tt}{\small
\[
\frac{3}{2}\left(  \frac{1}{s}-\frac{1}{t}\right)  u_{t}-u_{st}-u_{tt}\geq0.
\]
}
\end{lemma}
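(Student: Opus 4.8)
The plan is to reproduce the barrier-plus-maximum-principle scheme of Lemma~\ref{3/2uss+ust}. For a parameter $a>0$ put
\[
\eta:=a\left(\frac{1}{s}-\frac{1}{t}\right)u_{t}-u_{st}-u_{tt}.
\]
First one records the boundary behaviour of $\eta$ on $\partial\Omega$. On the Simons cone $\{s=t\}$ the first term vanishes and $u_{st}=0$ there (differentiating the antisymmetry $U(s,t)=-U(t,s)$ twice gives $u_{st}(s,t)=-u_{st}(t,s)$), so $\eta=-u_{tt}\ge 0$ since $u_{tt}\le 0$. On the axis $\{t=0\}$, smoothness of $u$ in the Cartesian variables forces $u_{t}(s,0)=u_{st}(s,0)=0$ and $u_{t}(s,t)/t\to u_{tt}(s,0)$ as $t\to 0^{+}$, whence $\eta\to -(a+1)u_{tt}(s,0)\ge 0$. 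Thus $\eta\ge 0$ on $\partial\Omega$.

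Next one computes $L\eta$, using $Lu_{t}=\tfrac{3}{t^{2}}u_{t}$ from \eqref{usut}, $Lu_{st}=(\tfrac{3}{s^{2}}+\tfrac{3}{t^{2}})u_{st}+6u_{s}u_{t}u$ from \eqref{ust}, the $s\leftrightarrow t$ analogue of \eqref{uss}, namely $Lu_{tt}=\tfrac{6}{t^{2}}u_{tt}-\tfrac{6}{t^{3}}u_{t}+6u_{t}^{2}u$ (legitimate because $\Delta$ is symmetric in $s,t$), together with $\Delta(s^{-2})=\Delta(t^{-2})=0$. Organizing the result as $L\eta-\tfrac{6-2a}{t^{2}}\eta=(\cdots)$, one finds that the value $a=\tfrac{3}{2}$ is forced by a double cancellation: it annihilates the coefficient of $u_{tt}$ and simultaneously removes the wrong-sign $u_{t}$-term proportional to $(\tfrac1s-\tfrac1t)u_{t}$, while leaving the $u_{st}$-coefficient non-positive. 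With $a=\tfrac32$ (so $\tfrac{6-2a}{t^{2}}=\tfrac{3}{t^{2}}$) the identity takes the form
\[
L\eta-\frac{3}{t^{2}}\eta=\left(\frac{15}{2t^{3}}-\frac{3}{2s^{3}}\right)u_{t}-\frac{6}{s^{2}}u_{st}-6u_{t}u\,(u_{s}+u_{t}),
\]
where the first term is negative in $\Omega$ (since $u_{t}<0$ and $\tfrac{15}{2t^{3}}>\tfrac{3}{2s^{3}}$ because $t<s$) and the second is $\le 0$ because $u_{st}\ge 0$.

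The crux — and the step I expect to be the main obstacle — is the quadratic term $-6u_{t}u\,(u_{s}+u_{t})=6(-u_{t})\,u\,(u_{s}+u_{t})\ge 0$, which must be absorbed into the two negative terms; this is precisely the role played by the decay estimates of Section~2 in the proof of Lemma~\ref{3/2uss+ust}. One bounds $u_{s}+u_{t}$ by the bootstrapped estimate \eqref{us+utnew}, $u_{s}+u_{t}\le(\tfrac1{t^{2}}-\tfrac1{s^{2}})(u_{s}-u_{t}+\tfrac12\sqrt{u_{s}-u_{t}})$, and then splits $\Omega$ into a neighbourhood of the cone, where one exploits that \emph{both} $u$ and $u_{s}+u_{t}$ are small (so $u(u_{s}+u_{t})$ carries an extra vanishing factor, cf.\ \eqref{decay}), and its complement, where $u_{t}$ decays (cf.\ Lemma~\ref{decayofus} and the Modica bound $u_{s}-u_{t}\le 1-u^{2}$, with Lemma~\ref{ust+uss} available if needed); matching the two estimates in the intermediate range is where the non-optimal constant $\tfrac32$ leaves room to close. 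These inputs give $6(-u_{t})u\,(u_{s}+u_{t})\le-\left(\tfrac{15}{2t^{3}}-\tfrac{3}{2s^{3}}\right)u_{t}+\tfrac{6}{s^{2}}u_{st}$, hence $L\eta-\tfrac{3}{t^{2}}\eta\le 0$ in $\Omega$. Since the zeroth-order coefficient $3/t^{2}$ is non-negative, Theorem~\ref{maximum} applies, and with $\eta\ge 0$ on $\partial\Omega$ it yields $\eta\ge 0$ in $\Omega$, which is the asserted inequality.
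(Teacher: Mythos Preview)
Your proposal is correct and follows essentially the same approach as the paper: define $\eta=a(\tfrac{1}{s}-\tfrac{1}{t})u_t-u_{st}-u_{tt}$, compute $L\eta-\tfrac{6-2a}{t^2}\eta$, take $a=\tfrac{3}{2}$ to reach the identical expression $(\tfrac{15}{2t^3}-\tfrac{3}{2s^3})u_t-\tfrac{6}{s^2}u_{st}-6u_tu(u_s+u_t)$, and invoke the maximum principle. The paper is in fact terser than you at the final step---it simply asserts ``This will be negative''---so your sketch of how to absorb the quadratic term via \eqref{us+utnew} is additional detail rather than a departure (one minor imprecision: the vanishing of the $u_{tt}$-coefficient comes from the choice of subtracting $\tfrac{6-2a}{t^2}\eta$ and holds for every $a$, while $a=\tfrac32$ is what kills the $\tfrac{1}{st^2}$ and $\tfrac{1}{t^2}$ contributions to the $u_t$- and $u_{st}$-coefficients).
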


\begin{proof}
Let
\[
\eta=a\left(  \frac{1}{s}-\frac{1}{t}\right)  u_{t}-u_{st}-u_{tt}.
\]
We compute
\begin{align*}
L\eta &  =\frac{3a}{t^{2}}\left(  \frac{1}{s}-\frac{1}{t}\right)  u_{t}%
+\frac{2a}{t^{2}}u_{tt}-\frac{2a}{s^{2}}u_{st}-a\left(  \frac{1}{s^{3}}%
-\frac{1}{t^{3}}\right)  u_{t}\\
&  -\left(  \frac{3}{s^{2}}+\frac{3}{t^{2}}\right)  u_{st}-6u_{s}u_{t}%
u-\frac{6}{t^{2}}u_{tt}+\frac{6}{t^{3}}u_{t}-6u_{t}^{2}u.
\end{align*}
Then
\begin{align*}
&  L\eta-\frac{6-2a}{t^{2}}\eta\\
&  =\left(  -\frac{6-2a}{t^{2}}a\left(  \frac{1}{s}-\frac{1}{t}\right)
+\frac{3a}{t^{2}}\left(  \frac{1}{s}-\frac{1}{t}\right)  -a\left(  \frac
{1}{s^{3}}-\frac{1}{t^{3}}\right)  +\frac{6}{t^{3}}\right)  u_{t}\\
&  +\left(  \frac{6-2a}{t^{2}}-\frac{2a}{s^{2}}-\frac{3}{s^{2}}-\frac{3}%
{t^{2}}\right)  u_{st}-6u_{t}\left(  u_{s}+u_{t}\right)  u.
\end{align*}
The left hand side is equal to
\begin{align*}
&  \left(  \frac{a\left(  6-2a\right)  -2a+6}{t^{3}}+\frac{-a\left(
6-2a\right)  +3a}{st^{2}}-\frac{a}{s^{3}}\right)  u_{t}\\
&  +\left(  \frac{-3-2a}{s^{2}}+\frac{3-2a}{t^{2}}\right)  u_{st}%
-6u_{t}\left(  u_{s}+u_{t}\right)  u.
\end{align*}
If we choose $a=\frac{3}{2},$ then this is equal to
\begin{align*}
&  \left(  \frac{a\left(  6-2a\right)  -2a+6}{t^{3}}+\frac{-a\left(
6-2a\right)  +3a}{st^{2}}-\frac{a}{s^{3}}\right)  u_{t}\\
&  -\frac{6}{s^{2}}u_{st}-6u_{t}\left(  u_{s}+u_{t}\right)  u\\
&  =\left(  \frac{15}{2t^{3}}-\frac{3}{2s^{3}}\right)  u_{t}-\frac{6}{s^{2}%
}u_{st}-6u_{t}\left(  u_{s}+u_{t}\right)  u.
\end{align*}
This will be negative. Hence $\eta\geq0.$
\end{proof}

\begin{proposition}
\label{st-tt}In $\Omega,$%
\[
u_{s}+u_{t}-u_{st}-u_{tt}\geq0.
\]

\end{proposition}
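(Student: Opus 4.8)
The plan is to prove the inequality by applying the maximum principle of Theorem~\ref{maximum} to $\eta:=u_{s}+u_{t}-u_{st}-u_{tt}$, with the nonnegative zeroth order coefficient $c=6/t^{2}$.

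First I would record the boundary values of $\eta$ on $\partial\Omega$. On the Simons cone $\{s=t\}$ the anti-symmetry $U(s,t)=-U(t,s)$ forces $u_{s}+u_{t}=0$, $u_{st}=0$ and $u_{tt}=-u_{ss}$, so there $\eta=u_{ss}$; since $u$ vanishes on the cone, Lemma~\ref{lemma22} ($u_{s}u+u_{ss}\geq0$) gives $u_{ss}\geq0$, whence $\eta\geq0$. On the segment $\{t=0\}$ the evenness of $u$ in $t$ forces $u_{t}=u_{st}=0$ while $u_{tt}\leq0$, so $\eta=u_{s}-u_{tt}\geq0$. Thus $\eta\geq0$ on $\partial\Omega$; moreover near $\{t=0\}$ one has $L\eta-\frac{6}{t^{2}}\eta\to-\infty$ because $\eta(s,0)>0$, so the singular coefficient causes no trouble there.

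Next I would compute $L\eta$ from $Lu_{s}=\frac{3}{s^{2}}u_{s}$, $Lu_{t}=\frac{3}{t^{2}}u_{t}$, the formula (\ref{ust}) for $Lu_{st}$, and the $s\leftrightarrow t$ analogue of (\ref{uss}), namely $Lu_{tt}=\frac{6}{t^{2}}u_{tt}-\frac{6}{t^{3}}u_{t}+6u_{t}^{2}u$. Regrouping so as to isolate $u_{s}+u_{t}$, one is led to
\[
L\eta-\frac{6}{t^{2}}\eta=\left(\frac{3}{s^{2}}-\frac{6}{t^{2}}-6uu_{t}\right)\left(u_{s}+u_{t}\right)+\frac{6}{t^{3}}u_{t}+3\left(\frac{1}{t^{2}}-\frac{1}{s^{2}}\right)\left(u_{t}+u_{st}\right).
\]
The two groups that are not manifestly nonpositive are the bracketed multiple of $u_{s}+u_{t}$ and the last term. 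For the last term I would invoke Lemma~\ref{utust} in the form $u_{t}+u_{st}\leq(1-\sqrt{2}u)u_{t}$ and combine it with $\frac{6}{t^{3}}u_{t}$, obtaining $u_{t}\left[\frac{6}{t^{3}}+3(1-\sqrt{2}u)\left(\frac{1}{t^{2}}-\frac{1}{s^{2}}\right)\right]$; since $1-\sqrt{2}u\geq1-\sqrt{2}$ the bracket is nonnegative as long as $t$ stays bounded, so this term is $\leq0$, while for large $t$ I would instead use that $u_{t}$ decays exponentially away from the Simons cone (Lemma~\ref{decayofus} together with the Modica estimate (\ref{modica})) to render it negligible. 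For the first group, (\ref{modica}) gives $6u\left\vert u_{t}\right\vert \leq3\sqrt{2}\,u(1-u^{2})\leq2\sqrt{2/3}$, so the coefficient $\frac{3}{s^{2}}-\frac{6}{t^{2}}-6uu_{t}$ is negative for $t$ small, and then $u_{s}+u_{t}\geq0$ makes the term $\leq0$; where it is positive I would bound $u_{s}+u_{t}$ by the bootstrapped estimate (\ref{us+utnew}) (which is small near the cone because $\frac{1}{t^{2}}-\frac{1}{s^{2}}=\frac{(s-t)(s+t)}{s^{2}t^{2}}$ is, and small far from the cone by exponential decay) and absorb the resulting product into the negative term $\frac{6}{t^{3}}u_{t}$. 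This yields $L\eta-\frac{6}{t^{2}}\eta\leq0$ in $\Omega$ (it is enough to verify this in the region $\{\eta<0\}$), and Theorem~\ref{maximum} then gives $\eta\geq0$.

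The step I expect to be the main obstacle is this last balancing, in the transition region where $t$ is of order one and the point is moderately far from the Simons cone, so that $u$ is near $1/\sqrt{3}$ (where $u\left\vert u_{t}\right\vert$ is largest) yet $u_{s}+u_{t}$ is not yet exponentially small: there one has to use the precise constants in (\ref{us+utnew}), Lemma~\ref{decayofus} and (\ref{modica}) to be sure the positive contributions are dominated by $\frac{6}{t^{3}}u_{t}$. Merely combining the preceding Lemma~\ref{3/2st+tt}, which gives $u_{st}+u_{tt}\leq\frac{3}{2}\left(\frac{1}{s}-\frac{1}{t}\right)u_{t}$, with the crude bounds $\left\vert u_{t}\right\vert \leq u_{s}\leq\frac{s}{t}\left\vert u_{t}\right\vert$ does not suffice, since that bound on $u_{st}+u_{tt}$ degenerates as $t\to0$.
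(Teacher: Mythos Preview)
Your approach is essentially the paper's: the same auxiliary function $\eta$, the same coefficient $c=6/t^{2}$, the same boundary check (on $\{s=t\}$ one has $\eta=u_{ss}\geq0$ by Lemma~\ref{lemma22} since $u=0$ there), and the identical formula
\[
L\eta-\frac{6}{t^{2}}\eta=\Bigl(\frac{3}{s^{2}}-\frac{6}{t^{2}}-6uu_{t}\Bigr)(u_{s}+u_{t})+\frac{6}{t^{3}}u_{t}+3\Bigl(\frac{1}{t^{2}}-\frac{1}{s^{2}}\Bigr)(u_{t}+u_{st}),
\]
after which the paper simply says ``it then follows from the estimate of $u_{s}+u_{t}$.''

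The one substantive difference, and the source of the obstacle you flag, is that you invoke Lemma~\ref{utust} ($\sqrt{2}u_{t}u+u_{st}\leq0$) for the $(u_{t}+u_{st})$ term. The paper has the sharper Lemma~\ref{utu+ust} ($u_{t}u+u_{st}\leq0$) available at this point, and that is what it uses (compare the parallel proof of Proposition~\ref{us+ut+ust}, where this is made explicit). With the sharper lemma one gets $u_{t}+u_{st}\leq(1-u)u_{t}\leq0$ outright, since $0\leq u\leq1$ in $\Omega$; hence the last term is nonpositive on its own, with no need to borrow from $\frac{6}{t^{3}}u_{t}$. The ``transition region'' difficulty you anticipate then simply does not arise: the whole of $\frac{6}{t^{3}}u_{t}$ remains available to absorb the only potentially positive piece $-6u_{t}u(u_{s}+u_{t})$, and the estimate of $u_{s}+u_{t}$ (Proposition~\ref{us+ut} or \eqref{us+utnew}) finishes the job. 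Swapping Lemma~\ref{utust} for Lemma~\ref{utu+ust} is the only change your argument needs.
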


\begin{proof}
Let $\eta=u_{s}+u_{t}-u_{st}-u_{tt}.$ Then
\begin{align*}
L\eta &  =\frac{3u_{s}}{s^{2}}+\frac{3u_{t}}{t^{2}}-\left(  \frac{3}{s^{2}%
}+\frac{3}{t^{2}}\right)  u_{st}-6u_{s}u_{t}u-\frac{6}{t^{2}}u_{tt}+\frac
{6}{t^{3}}u_{t}-6u_{t}^{2}u\\
&  =\frac{6}{t^{2}}\eta-\frac{6}{t^{2}}\left(  u_{s}+u_{t}-u_{st}\right)
+\frac{3u_{s}}{s^{2}}+\frac{3u_{t}}{t^{2}}\\
&  -\left(  \frac{3}{s^{2}}+\frac{3}{t^{2}}\right)  u_{st}-6u_{s}u_{t}%
u-\frac{6}{t^{2}}u_{tt}+\frac{6}{t^{3}}u_{t}-6u_{t}^{2}u\\
&  =\frac{6}{t^{2}}\eta+\left(  -\frac{6}{t^{2}}+\frac{3}{s^{2}}\right)
u_{s}+\left(  -\frac{3}{t^{2}}\right)  u_{t}+\left(  \frac{3}{t^{2}}-\frac
{3}{s^{2}}\right)  u_{st}+\frac{6}{t^{3}}u_{t}-6u_{t}u\left(  u_{s}%
+u_{t}\right)  .
\end{align*}
We write it as
\begin{align*}
L\eta-\frac{6}{t^{2}}\eta &  =\left(  \frac{3}{t^{2}}-\frac{3}{s^{2}}\right)
\left(  u_{st}+u_{t}\right)  +\left(  -\frac{6}{t^{2}}+\frac{3}{s^{2}}\right)
\left(  u_{s}+u_{t}\right) \\
&  +\frac{6}{t^{3}}u_{t}-6u_{t}u\left(  u_{s}+u_{t}\right)  .
\end{align*}
It then follows from the estimate of $u_{s}+u_{t}$ that $L\eta-\frac{6}{t^{2}%
}\eta\leq0,$ which implies $\eta\geq0.$
\end{proof}

Similarly, we have

\begin{proposition}
\label{us+ut+ust}In $\Omega,$ we have%
\[
u_{s}+u_{t}+u_{st}+u_{ss}\geq0.
\]

\end{proposition}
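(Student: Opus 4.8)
The proof is parallel to that of Proposition \ref{st-tt}. Set $\eta:=u_{s}+u_{t}+u_{st}+u_{ss}$; the plan is to show $\eta\geq0$ in $\Omega$ by applying the maximum principle of Theorem \ref{maximum} to the operator $L-\frac{6}{s^{2}}$. I first check the boundary condition. On $\partial\Omega=\left\{s=t>0\right\}$, differentiating the symmetry relation $U\left(s,t\right)=-U\left(t,s\right)$ once and twice gives $u_{s}+u_{t}=0$, $u_{st}=0$ and $u_{ss}=-u_{tt}$ along the diagonal, so that $\eta=u_{ss}=-u_{tt}\geq0$ there, since $u_{tt}\leq0$ in $\Omega$ and hence on $\overline{\Omega}$.

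For the interior, using $Lu_{s}=\frac{3}{s^{2}}u_{s}$, $Lu_{t}=\frac{3}{t^{2}}u_{t}$ from $\left(\ref{usut}\right)$ together with $\left(\ref{uss}\right)$ and $\left(\ref{ust}\right)$, the same algebra as in the preceding lemmas yields
\[
L\eta-\frac{6}{s^{2}}\eta=\left(\frac{3}{t^{2}}-\frac{3}{s^{2}}\right)\left(u_{t}+u_{st}\right)+\left(6u_{s}u-\frac{3}{s^{2}}\right)\left(u_{s}+u_{t}\right)-\frac{6}{s^{3}}u_{s}.
\]
The third term is $\leq0$ since $u_{s}>0$. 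For the first term, Lemma \ref{utu+ust} gives $u_{st}\leq-uu_{t}$, hence $u_{t}+u_{st}\leq\left(1-u\right)u_{t}\leq0$ (recall $0<u<1$ and $u_{t}<0$ in $\Omega$), while $\frac{3}{t^{2}}-\frac{3}{s^{2}}>0$ in $\Omega$; using in addition $\left\vert u_{t}\right\vert \geq\frac{t}{s}u_{s}$, this term is bounded above by $-\frac{3\left(s-t\right)\left(s+t\right)}{s^{3}t}\left(1-u\right)u_{s}\leq0$.

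The only term of indeterminate sign is $\left(6u_{s}u-\frac{3}{s^{2}}\right)\left(u_{s}+u_{t}\right)$, and controlling it where it is positive is the crux of the argument. There one bounds $u_{s}+u_{t}$ from above by the refined decay estimate $\left(\ref{us+utnew}\right)$ (obtained by inserting Lemma \ref{ust+uss} into the proof of Proposition \ref{us+ut}) and by the monotonicity bound $\left(\ref{decay}\right)$, $u_{s}+u_{t}\leq\frac{s-t}{2s}u_{s}$, and bounds $u_{s}u$ from above by the Modica estimate $u_{s}\leq\frac{1-u^{2}}{2}$. Feeding these into the displayed identity and playing the resulting positive contribution off against the two negative terms---the first of which already carries a multiple of $u_{s}$ with the favorable weight $\frac{\left(s-t\right)\left(s+t\right)}{s^{3}t}\left(1-u\right)$, while $-\frac{6}{s^{3}}u_{s}$ absorbs the rapidly decaying contribution of $u_{s}+u_{t}$ far from the Simons cone---one verifies directly that $L\eta-\frac{6}{s^{2}}\eta\leq0$ throughout $\Omega$. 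Theorem \ref{maximum} then yields $\eta\geq0$. The main difficulty is exactly this term: near the Simons cone one uses that $u$, hence $6u_{s}u$, is small, whereas away from it one uses the fast decay of $u_{s}+u_{t}$, which is why the sharpened bound $\left(\ref{us+utnew}\right)$ and the second-derivative estimates of Lemmas \ref{utust}, \ref{ust+uss} and \ref{utu+ust} are needed rather than the cruder $\left(\ref{bound}\right)$.
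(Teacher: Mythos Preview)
Your argument is correct and follows the paper's proof essentially verbatim: set $\eta=u_{s}+u_{t}+u_{st}+u_{ss}$, compute $L\eta-\frac{6}{s^{2}}\eta$, use $u_{t}u+u_{st}\leq0$ (Lemma \ref{utu+ust}) to make the $\left(\frac{3}{t^{2}}-\frac{3}{s^{2}}\right)(u_{t}+u_{st})$ term nonpositive, absorb the remaining positive contribution $6u_{s}u(u_{s}+u_{t})$ via the decay estimate on $u_{s}+u_{t}$, and conclude by Theorem \ref{maximum}. The only cosmetic differences are that the paper checks the boundary condition on $\{s=t\}$ by invoking Lemma \ref{lemma22} (which at $u=0$ gives $u_{ss}\geq0$ directly) rather than the symmetry argument you give, and it does not split off $-\frac{3}{s^{2}}(u_{s}+u_{t})$ from $6u_{s}u(u_{s}+u_{t})$; but the identity and the ingredients used are identical. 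One minor slip: the Modica bound is $u_{s}\leq\frac{1-u^{2}}{\sqrt{2}}$, not $\frac{1-u^{2}}{2}$.
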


The proof of Proposition \ref{us+ut+ust} is similar to the proof of
Proposition \ref{st-tt}. That is, denoting
\[
\eta=u_{s}+u_{t}+u_{st}+u_{ss}.
\]
We can show that $L\eta-\frac{6}{s^{2}}\eta\leq0,$ which then implies that
$\eta\geq0.$ We sketch the proof below.

\begin{proof}
Let $\eta=u_{s}+u_{t}+u_{st}+u_{ss}.$ First of all, since $u_{s}u+u_{ss}%
\geq0,$ we obtain $\eta\geq0$ on $\partial\Omega.$ We compute%
\begin{align*}
L\eta &  =\frac{3}{s^{2}}u_{s}+\frac{3}{t^{2}}u_{t}+\left(  \frac{3}{s^{2}%
}+\frac{3}{t^{2}}\right)  u_{st}+6u_{s}u_{t}u\\
&  +\frac{6}{s^{2}}u_{ss}-\frac{6}{s^{3}}u_{s}+6u_{s}^{2}u.
\end{align*}
Then we get
\begin{align*}
L\eta-\frac{6}{s^{2}}\eta &  =-\frac{3}{s^{2}}\left(  u_{s}+u_{t}\right)
+\left(  \frac{3}{t^{2}}-\frac{3}{s^{2}}\right)  \left(  u_{st}+u_{t}\right)
\\
&  -\frac{6}{s^{3}}u_{s}+6u_{s}u\left(  u_{s}+u_{t}\right)  .
\end{align*}
Using the fact that $u_{t}u+u_{st}\leq0$ and applying the estimate of
$u_{s}+u_{t}$, we see that $L\eta-\frac{6}{s^{2}}\eta\leq0.$ It then follows
from the maximum principle that $\eta\geq0.$
\end{proof}

An immediate consequence of Proposition \ref{us+ut+ust} and estimate $\left(
\ref{us+utnew}\right)  $ is the following

\begin{corollary}
\label{improveduss+ust} In $\Omega,$ we have
\[
\left(  \frac{1}{t^{2}}-\frac{1}{s^{2}}\right)  \left(  u_{s}-u_{t}+\frac
{1}{2}\sqrt{u_{s}-u_{t}}\right)  +u_{st}+u_{ss}\geq0.
\]

\end{corollary}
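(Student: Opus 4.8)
The plan is to chain together the two ingredients flagged in the statement, with a little sign bookkeeping. Throughout $\Omega=\{s>t>0\}$ we have $u_{s}>0$ and $u_{t}<0$, so $u_{s}-u_{t}>0$ and $\sqrt{u_{s}-u_{t}}$ is well defined; moreover $s>t>0$ gives $\frac{1}{t^{2}}-\frac{1}{s^{2}}=\frac{s^{2}-t^{2}}{s^{2}t^{2}}>0$. Hence the quantity
\[
Q:=\left(\frac{1}{t^{2}}-\frac{1}{s^{2}}\right)\left(u_{s}-u_{t}+\frac{1}{2}\sqrt{u_{s}-u_{t}}\right)
\]
is nonnegative in $\Omega$; this is the only preliminary observation one needs.

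First I would invoke Proposition \ref{us+ut+ust}, which reads $u_{s}+u_{t}+u_{st}+u_{ss}\geq0$, i.e.
\[
u_{st}+u_{ss}\geq-\left(u_{s}+u_{t}\right)\quad\text{in }\Omega .
\]
Next I would apply the bootstrapped bound $\left(\ref{us+utnew}\right)$ — obtained by feeding the improved estimate of $u_{st}$ from Lemma \ref{ust+uss} into the argument of Proposition \ref{us+ut} — which states precisely $u_{s}+u_{t}\leq Q$. Combining the two displays,
\[
u_{st}+u_{ss}\geq-\left(u_{s}+u_{t}\right)\geq-Q ,
\]
which is exactly the asserted inequality $Q+u_{st}+u_{ss}\geq0$.

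The main obstacle for this corollary has in fact already been overcome in the two cited results: both the maximum-principle argument behind Proposition \ref{us+ut+ust} and the full chain of gradient and second-derivative estimates behind the bootstrapped bound $\left(\ref{us+utnew}\right)$ have been established earlier in this section. What remains is purely formal; the only point requiring care is that $\frac{1}{t^{2}}-\frac{1}{s^{2}}$ and $u_{s}-u_{t}$ are both strictly positive on the open region $\Omega$, so that $Q\geq0$ and the two-step inequality chain is not spoiled by a sign change (a subtlety that would matter, for instance, on $\Omega^{\ast}$, where $t$ may be negative).
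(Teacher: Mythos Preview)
Your proof is correct and matches the paper's own derivation exactly: the paper states the corollary as ``an immediate consequence of Proposition~\ref{us+ut+ust} and estimate~(\ref{us+utnew})'', and you have chained those two inputs in precisely the intended way. The only superfluous remark is that the positivity of $Q$ is not actually needed for the inequality chain $u_{st}+u_{ss}\ge -(u_{s}+u_{t})\ge -Q$; multiplying $u_{s}+u_{t}\le Q$ by $-1$ is valid irrespective of the sign of $Q$.
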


Now we would like to establish a lower bound on $u_{s}.$ Let us define the
function $E:=u^{2}+2u_{s}.$ By Lemma \ref{lemma22} and Lemma \ref{utu+ust}, we
have%
\begin{equation}
u_{s}u-u_{t}u+u_{ss}-u_{st}\geq0. \label{uzmono}%
\end{equation}
This implies $\partial_{z}E\geq0.$ We will slightly abuse the notation and
still write the function $u,u_{s}$ in $\left(  y,z\right)  $ variables as
$u\left(  y,z\right)  ,u_{s}\left(  y,z\right)  .$ Recall that $u=H\left(
y\right)  H\left(  z\right)  -\phi.$ From $\left(  \ref{uzmono}\right)  $ and
the fact that $u\leq H\left(  y\right)  H\left(  z\right)  ,$ we get the
following estimate in $\Omega:$
\begin{align}
2u_{s}\left(  y,z\right)   &  \geq2u_{s}\left(  y,0\right)  +u^{2}\left(
y,0\right)  -u^{2}\left(  y,z\right) \nonumber\\
&  \geq2u_{s}\left(  y,0\right)  -\left(  H\left(  y\right)  H\left(
z\right)  \right)  ^{2}\nonumber\\
&  =2\partial_{s}\left(  H\left(  y\right)  H\left(  z\right)  -\phi\right)
|_{\left(  z=0\right)  }-\left(  H\left(  y\right)  H\left(  z\right)
\right)  ^{2}. \label{uslow}%
\end{align}

We have mentioned that the estimate of $u_{tt}$ is most delicate. To conclude
this section, let us derive certain upper bound on $\left\vert u_{tt}%
\right\vert .$ Note that so far we have good control on $\left\vert
u_{ss}\right\vert $ and $\left\vert u_{st}\right\vert ,$ in terms of $u_{s}$
and $u_{t}$ respectively. We first recall that%
\[
u_{ss}+u_{tt}+\frac{3}{s}u_{s}+\frac{3}{t}u_{t}=-u\left(  1-u^{2}\right)  .
\]
Since
\[
2u_{s}\left(  y,z\right)  \geq2u_{s}\left(  y,0\right)  -u^{2}\left(
y,z\right)  ,
\]
there holds
\[
1-u^{2}\left(  y,z\right)  \leq2u_{s}\left(  y,z\right)  +1-2u_{s}\left(
y,0\right)  .
\]
Hence
\begin{equation}
u_{ss}+u_{tt}+\frac{3}{s}u_{s}+\frac{3}{t}u_{t}\geq-u\left(  2u_{s}\left(
y,z\right)  +1-2u_{s}\left(  y,0\right)  \right)  . \label{200}%
\end{equation}
This together with the fact that $u_{tt}$ is negative, clearly gives us an
upper bound on $\left\vert u_{tt}\right\vert .$

\section{Construction of supersolution in dimension $8$}

In this section, we will prove Theorem \ref{main}. Consider function $\phi$ of
the form $fu_{s}+hu_{t}.$ We have
\begin{align*}
L\phi &  =u_{s}\Delta f+2u_{ss}f_{s}+2u_{st}f_{t}+\frac{3}{s^{2}}u_{s}f\\
&  +u_{t}\Delta h+2u_{st}h_{s}+2u_{tt}h_{t}+\frac{3}{t^{2}}u_{t}h.
\end{align*}
Hence
\begin{align}
L\phi &  =\left(  \Delta f+\frac{3}{s^{2}}f\right)  u_{s}+\left(  \Delta
h+\frac{3}{t^{2}}h\right)  u_{t}\nonumber\\
&  +2u_{ss}f_{s}+\left(  2f_{t}+2h_{s}\right)  u_{st}+2u_{tt}h_{t}.\label{Lfi}%
\end{align}
Let us write it as
\[
L\phi=C_{s}u_{s}+C_{st}u_{st}+C_{ss}u_{ss}+C_{tt}u_{tt}+C_{t}u_{t},
\]
where
\begin{align*}
C_{s} &  :=\Delta f+\frac{3}{s^{2}}f,\text{ }C_{st}:=2f_{t}+2h_{s},\\
C_{ss} &  :=2f_{s},\text{ }C_{tt}:=2h_{t},\text{ }C_{t}:=\Delta h+\frac
{3}{t^{2}}h.
\end{align*}
Recall that in dimension $\,n\geq14,$ Cabr\'{e} \cite{C1} made the choice
$f=t^{-\alpha},h=-s^{\alpha},$ for suitable constant $\alpha>0.$ In our case,
to construct a supersolution, we choose
\begin{align*}
f\left(  s,t\right)   &  :=\left(  \tanh\left(  \frac{s}{t}\right)
\frac{\sqrt{2}s}{\sqrt{s^{2}+t^{2}}}+\frac{1}{4.2}\left(  1-e^{-\frac{s}{2t}%
}\right)  \right)  \left(  s+t\right)  ^{-2.5},\\
h\left(  s,t\right)   &  :=-\left(  \tanh\left(  \frac{t}{s}\right)
\frac{\sqrt{2}t}{\sqrt{s^{2}+t^{2}}}+\frac{1}{4.2}\left(  1-e^{-\frac{t}{2s}%
}\right)  \right)  \left(  s+t\right)  ^{-2.5}.
\end{align*}
We now define $\Phi_{0}:=0.00007\left(  s^{-1.8}e^{-\frac{t}{3}}%
+t^{-1.8}e^{-\frac{s}{3}}\right)  $ and $\Phi_{1}=fu_{s}+hu_{t}.$ Then we set
\begin{equation}
\Phi:=\Phi_{0}+\Phi_{1}.\label{testPhi}%
\end{equation}
Note that $\Phi>0$ and $\Phi\left(  s,t\right)  =\Phi\left(  t,s\right)  .$
The reason that we choose this specific $f,h,$ instead of the more natural
choice of $\mu:=\left(  s+t\right)  ^{-2.5}\left(  u_{s}-u_{t}\right)  ,$ is
the following: Although near the Simons cone, the function $\mu$ is well
behaved, it does not satisfies $L\mu\leq0$ away from the Simon cone(for
instance, when $\left(  s,t\right)  =\left(  3,2\right)  $). To deal with this
issue, we have multiplied the term $\tanh\left(  \frac{s}{t}\right)  \frac
{s}{\sqrt{s^{2}+t^{2}}}.$ Next, we add a small perturbation term $\Phi_{0},$
because the function $fu_{s}+hu_{t}$ essentially decays as $e^{t-s}$ and is
not a good supersolution when $s-t$ is very large, where the linearized
operator looks like $-\Delta+2.$ Finally, the term $\frac{1}{4.2}\left(
1-e^{-\frac{s}{2t}}\right)  $ is used to control the sign of $L\Phi$ near the
point $\left(  5,2.5\right)  .$

We would like to show that $\Phi$ is a supersolution of the linearized
operator $L$. Due to symmetry, in the sequel, we only need to consider the
problem in $\Omega.$

Our first observation is the following fact:
\[
C_{s}<0,C_{st}<0,C_{ss}<0,\text{ in }\Omega.
\]
We emphasize that $C_{t}$ and $C_{tt}$ may change sign. As a matter of fact,
$C_{t}$ changes sign near the Simons cone, and in most part of $\Omega,$
$C_{t}$ is negative. When $t>\frac{1}{2},$ $C_{tt}$ is negative in the
region(approximately desribed by) $2s/5<t<3s/5.$ Moreover, in this region,
$\left\vert C_{tt}\right\vert $ is small compared to $\left\vert
C_{ss}\right\vert .$(See Figure 11). It follows that $C_{s}u_{s}$ and
$C_{st}u_{st}$ are negative, which can be regarded as \textquotedblleft
good\textquotedblright\ terms. The \textquotedblleft bad\textquotedblright%
\ terms are $C_{ss}u_{ss}$ and $C_{t}u_{t}.$

The main idea of our proof is to control the other positive terms using
$C_{s}u_{s}+C_{st}u_{st}$ and $C_{ss}u_{ss},$ based on the estimates obtained
in the previous section. We have the following

%\begin{figure}[ptbh]
%\caption{$-C_{s}$}%
%{\small \centering
%{ \includegraphics[
%			height=2.5 in,
%			width=3. in
%			]{} }}\end{figure}

\begin{proposition}
\label{supersolution}Let $\Phi$ be the function defined by $\left(
\ref{testPhi}\right)  $. For all $\left(  s,t\right)  ,$ we have $L\Phi\leq0.$
\end{proposition}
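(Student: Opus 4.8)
The plan is to verify the pointwise inequality $L\Phi \le 0$ on all of $\Omega$ (the case on $\Omega^\ast\setminus\Omega$ follows by the symmetry $\Phi(s,t)=\Phi(t,s)$ and $U(s,t)=-U(t,s)$), by splitting $\Omega$ into a bounded near-origin/near-cone region and an unbounded region, and in each region dominating the "bad" positive contributions to $L\Phi$ by the "good" negative ones. Using $L\Phi_1 = C_s u_s + C_{st}u_{st} + C_{ss}u_{ss} + C_{tt}u_{tt} + C_t u_t$ from \eqref{Lfi} and computing $L\Phi_0$ directly (here $\Phi_0$ is a sum of separated exponentials, so $L\Phi_0$ is explicit up to the factor $3U^2-1$, which we bound using $U^2 \ge (H(0.45y)H(0.45z))^2$ in the region $z$ large and $U^2 \le (H(y)H(z))^2$ everywhere), we obtain
\[
L\Phi = L\Phi_0 + C_s u_s + C_{st}u_{st} + C_{ss}u_{ss} + C_{tt}u_{tt} + C_t u_t .
\]
The signs $C_s<0$, $C_{st}<0$, $C_{ss}<0$ are established first by direct computation with the explicit $f,h$; since $u_s>0$, $u_{st}\ge 0$, the terms $C_su_s$ and $C_{st}u_{st}$ are negative and available as a "budget". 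The dangerous terms are $C_{ss}u_{ss}$ (bad where $u_{ss}>0$, i.e. near the origin and the $y$-axis), $C_tu_t$ (bad where $C_t>0$, which happens only near the cone), $C_{tt}u_{tt}$ (bad where $C_{tt}<0$, a sliver around $t\sim s/2$), and the leftover part of $L\Phi_0$.

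\textbf{Key steps in order.} First, establish $C_s,C_{st},C_{ss}<0$ and locate the sign-change loci of $C_t,C_{tt}$ by explicit computation with $f,h$ (this is where the particular form $\tanh(s/t)\,\sqrt2\,s/\sqrt{s^2+t^2}+\tfrac1{4.2}(1-e^{-s/2t})$ times $(s+t)^{-2.5}$ pays off, the exponent $2.5\in(2,3)$ matching the Jacobi fields $c_1(s+t)^{-2}+c_2(s+t)^{-3}$). Second, in the region near the Simons cone (small $z$), where $C_t>0$ is a problem, use the bootstrapped gradient bound \eqref{us+utnew}, $u_s+u_t \le (t^{-2}-s^{-2})(u_s-u_t+\tfrac12\sqrt{u_s-u_t})$, together with $|u_t|\ge (t/s)u_s$, to show $C_tu_t$ is controlled; crucially $C_t\to$ something small near the cone and $u_t<0$ gives a favorable sign on one side. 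Third, in the region where $u_{ss}>0$ (near the origin and $y$-axis), use $u_{ss}\le -\sqrt2\,u_s u$ from Lemma \ref{uus} — wait, that gives $u_{ss}\ge -\sqrt2 u_s u$; rather use $u_{ss}\le u_s/s$ (Lemma \ref{us/s}) and $u_{ss}\le u-u^3$-type bounds, plus near the origin the refined bound $u\le 0.434yz$, to bound $C_{ss}u_{ss}$ against $C_su_s+C_{st}u_{st}$. Fourth, handle the thin region $2s/5<t<3s/5$ where $C_{tt}<0$ by noting $|C_{tt}|\ll|C_{ss}|$ there and using $u_{tt}\ge u_s/s+u_t/t - u_{ss} - (u-u^3)$ from the equation together with Propositions \ref{st-tt}, \ref{us+ut+ust} and Lemma \ref{ut/t}, which bound $u_{st}+u_{tt}$ from below. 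Fifth, in the far region $s-t$ large, $\Phi_1$ decays like $e^{t-s}$ and $L\approx -\Delta+2$, so there $\Phi_0$ with its slower decay $s^{-1.8}e^{-t/3}$ carries the load: $L\Phi_0\le 0$ there because $3U^2-1$ is close to $2$ and $2-(a^2)>0$-type inequalities hold with $a=1/3$; the small coefficient $0.00007$ ensures $\Phi_0$ doesn't spoil the near-cone region where $\Phi_1$ dominates. Finally, glue the regional estimates together, checking the finitely many "transition" points (e.g. $(3,2)$, $(5,2.5)$, $(5,2)$ are explicitly flagged in the text) by the monotonicity/derivative estimates of Section 2.

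\textbf{Main obstacle.} The hard part will be the region adjacent to the $s$-axis (i.e. $t$ small but $s-t$ moderate), because there $u_{tt}$ is the least controlled second derivative — the text explicitly says its upper bound near the $s$-axis "is the most difficult one" — and because $C_{tt}u_{tt}$ and $C_tu_t$ both contribute with signs that are not obviously helpful while $u_s$ is no longer large enough to provide a comfortable budget. The resolution must combine \eqref{200} (the lower bound $u_{ss}+u_{tt}+\tfrac3s u_s+\tfrac3t u_t \ge -u(2u_s(y,z)+1-2u_s(y,0))$) with the lower bound on $u_s$ from \eqref{uslow}, converting an $u_{tt}$ bound into something expressible through $u_s$ alone, and then verifying by a (tedious but elementary) one-variable analysis in the ratio $t/s$ that the resulting explicit inequality holds for the chosen constants. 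A secondary delicate point is ensuring all the regional estimates overlap, with no gap in $(s,t)$ left unchecked; this is handled by making each region's defining inequality explicit (in terms of $t/s$ and $s+t$) and checking the boundaries numerically against the derivative bounds of Section 2.
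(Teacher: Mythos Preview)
Your overall framework---reduce to $\Omega$ by symmetry, analyze the sign structure of the coefficients $C_s,C_{st},C_{ss},C_{tt},C_t$, decompose $\Omega$ into subregions, and in each absorb the positive contributions into the negative budget using the derivative estimates of Section~2---matches the paper's. But there is a concrete error in your sign bookkeeping that misdirects the bulk of the argument.

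You correctly record $C_{ss}<0$ and $u_t<0$ in $\Omega$, yet then declare $C_{ss}u_{ss}$ ``bad where $u_{ss}>0$'' and $C_tu_t$ ``bad where $C_t>0$''. Both are backwards. With $C_{ss}<0$, the term $C_{ss}u_{ss}$ is \emph{positive} (hence bad) precisely where $u_{ss}<0$, i.e.\ away from the origin and the diagonal; and with $u_t<0$, the term $C_tu_t$ is positive where $C_t<0$, which the paper says is ``most part of $\Omega$''---not the thin strip near the cone. (Your own aside ``$u_t<0$ gives a favorable sign on one side'' contradicts your claim that $C_t>0$ is the problem.) Consequently your Steps~2 and~3 are aimed at regions that are actually harmless, and the genuinely dangerous combinations---$C_{ss}u_{ss}$ for $u_{ss}<0$ and $C_tu_t$ for $C_t<0$---are not addressed. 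For instance, in Step~3 you discard Lemma~\ref{uus} (which gives $u_{ss}\ge -\sqrt{2}u_su$) in favor of the upper bound $u_{ss}\le u_s/s$; but it is the \emph{lower} bound on $u_{ss}$ that caps $C_{ss}u_{ss}$ from above when $C_{ss}<0$, so you threw out the useful inequality.

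The paper's decomposition is also more specific than ``near-cone versus unbounded'': it takes $E_1=\{0.65s<t<s,\ t>1/2\}$, $E_2=\{1/2<t<0.65s\}$, $E_3=\{t<1/2\}$. In $E_1$ the key device, absent from your plan, is to set $a=|u_t|/u_s$ and collapse everything to a single coefficient of $u_s$ via $(1-a)u_s+u_{ss}+u_{st}\ge 0$ (from Proposition~\ref{us+ut+ust}) and $2(u_s+u_t)+u_{ss}-u_{tt}\ge 0$, then check the explicit ratio inequalities $C_t/C_s<0.9$, $C_{ss}/(C_{st}-C_{tt})<1$ and the sign of $T(r)$. In $E_2$ the exponential decay of $u_s$ (Lemma~\ref{decayofus}) is what lets $\Phi_0$ dominate; you mention this only obliquely. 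In $E_3$---which you correctly flag as the hard part---the paper does \emph{not} route through \eqref{200} and \eqref{uslow}; instead it uses that $u_t(s,t)=\int_0^t u_{tt}(s,r)\,dr$, so that $|u_t|\le t\,|u_{tt}^*|$ with $u_{tt}^*=\min_{r\in(0,t)}u_{tt}(s,r)$, together with $|u_{ss}+u_{tt}|<u-u^3$ (from the equation plus $u_s/s+u_t/t<0$), and then verifies numerically that $|2(C_s-C_t)t\,u_{tt}^*|$ is beaten by $-L\Phi_0$. Your proposed route through \eqref{200} may also work, but it is not what the paper does, and you would still need the $u_t\sim tu_{tt}$ observation to neutralize the $C_tu_t$ term when $t$ is small and $|C_t|$ is large.
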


\begin{proof}
Since $\Phi$ is even with respect to the Simons cone, it will be suffice to
prove this inequality in $\Omega.$

Lemma \ref{3/2uss+ust} tells us that
\begin{equation}
\frac{3}{2}\left(  \frac{1}{t}-\frac{1}{s}\right)  u_{s}+u_{st}+u_{ss}\geq0.
\label{v1}%
\end{equation}
For notational convenience, we will also write the coefficient as $\lambda.$
That is
\[
\lambda:=\frac{3}{2}\left(  \frac{1}{t}-\frac{1}{s}\right)  .
\]
It plays an important role in our analysis, since it measures how close is
$u_{st}$ to $u_{ss}.$

Note that by Proposition \ref{us+ut+ust} and the fact that $\left\vert
u_{t}\right\vert \geq\frac{t}{s}u_{s},$ we have
\begin{equation}
\frac{s-t}{s}u_{s}+u_{st}+u_{ss}\geq u_{s}+u_{t}+u_{st}+u_{ss}\geq0.\label{v2}%
\end{equation}
Hence when $t<\frac{3}{2},$ the inequality $\left(  \ref{v1}\right)  $ is
weaker than $\left(  \ref{v2}\right)  .$ Moreover, $\left(  \ref{v2}\right)  $
has the following simple consequence: If $\left\vert u_{t}\right\vert =au_{s}$
for some constant $a<1,$ then
\begin{equation}
u_{ss}+u_{st}+\left(  1-a\right)  u_{s}\geq0.\label{v3}%
\end{equation}
This estimate is useful, because a priori, we don't know the precise value of
$\left\vert u_{t}/u_{s}\right\vert ,$ although it is always bounded from below
by $t/s.$ At this stage, it will be crucial to have some information on the
ratio $C_{t}/C_{s}.$ We have
\[
C_{t}/C_{s}<0.9,\text{ if }s/10<t<s.
\]
See Figure 6 for detailed information on the function $C_{t}/C_{s}.$ This
tells us that $C_{s}u_{s}+C_{t}u_{t}<0,$ for $s/10<t<s.$ Moreover, it turns
out that $C_{s}u_{s}+C_{t}u_{t}+C_{st}u_{st}+C_{tt}u_{tt}$ can be used to
control the term $C_{ss}u_{ss}.$ Indeed, first of all, we have(See Figure 7),
in the region $s/10<t<s,$
\[
\frac{C_{ss}}{C_{st}-C_{tt}}<1.
\]
Then in this region, we can use $\left(  \ref{v3}\right)  $ to estimate
$u_{st}+u_{ss}$, and applying Proposition \ref{st-tt} to deduce
\begin{equation}
2\left(  u_{s}+u_{t}\right)  +u_{ss}-u_{tt}\geq0.\label{v5}%
\end{equation}
It follows that if $C_{tt}>0,$ then
\begin{align}
&  C_{s}u_{s}+C_{st}u_{st}+C_{ss}u_{ss}+C_{tt}u_{t}+C_{t}u_{t}\nonumber\\
&  \leq\left(  C_{s}-\left(  1-a\right)  C_{ss}-aC_{t}+\left(  1-a\right)
\max\left\{  C_{st}-C_{ss},0\right\}  \right)  u_{s},\label{v66}%
\end{align}
where $a$ can be choosen to be $\left\vert u_{t}\right\vert /u_{s}$ or
$\lambda.$

With the help of these estimates, let us consider the subregion
\[
E_{1}:=\left\{  \left(  s,t\right)  :0.65s<t<s\text{ and }t>\frac{1}%
{2}\right\}  .
\]
Consider the function $T\left(  r\right)  :=\frac{\left(  1-r\right)  C_{ss}%
}{C_{s}+\left(  1-r\right)  \max\left\{  C_{st}-C_{ss},0\right\}  -rC_{t}}.$
It turns out that
\[
0<\frac{\left(  1-r\right)  C_{ss}}{C_{s}+\left(  1-r\right)  \max\left\{
C_{st}-C_{ss},0\right\}  -rC_{t}}<1\text{ in }E_{1}\text{, for all }%
r\in\lbrack1-\lambda,1).
\]
See Figure 8 on the picture of this function in the case of $r=1-\lambda.$ We
conclude that
\[
L\Phi_{1}<0,\text{ in }E_{1}.
\]
We also observe
\begin{equation}
L\left(  s^{-1.8}e^{-\frac{t}{3}}\right)  =-0.36s^{-3.8}e^{-\frac{t}{3}%
}+s^{-1.8}e^{-\frac{t}{3}}\left(  -\frac{1}{t}+\frac{10}{9}-3u^{2}\right)
.\label{fi0}%
\end{equation}
Hence $L\Phi_{0}$ will be positive only in the region close to the Simons
cone, where $-\frac{1}{t}+\frac{10}{9}-3u^{2}>0.$ Note that we always have
$u\geq H\left(  0.45y\right)  H\left(  0.45z\right)  .$ In $E_{1}$, we then
verify that $L\left(  \Phi_{1}+\Phi_{0}\right)  \leq0,$ see Figure 9. Hence we
conclude that
\[
L\Phi\leq0\text{ in }E_{1}.
\]

Next we consider the region $E_{2}:=\left\{  \left(  s,t\right)
:1/2<t<0.65s\right\}  .$ In this region, $L\Phi_{1}$ may be positive. However,
we already know that, if $C_{tt}>0,$ then $\left(  \ref{v66}\right)  $ holds.
Moreover, if $C_{tt}<0,$ then by(Lemma \ref{ut/t})
\begin{equation}
-\frac{1}{t}u_{t}+u_{st}+u_{tt}\geq0,\label{v4}%
\end{equation}
we get
\begin{align}
&  C_{s}u_{s}+C_{st}u_{st}+C_{ss}u_{ss}+C_{tt}u_{t}+C_{t}u_{t}\nonumber\\
&  \leq\left(  C_{s}-\left(  1-a\right)  C_{ss}-aC_{t}+\frac{1}{t}\left\vert
C_{tt}\right\vert \right)  u_{s},\label{v7}%
\end{align}
We emphasize that in the region where $C_{tt}<0,$ actually $\left\vert
C_{tt}/C_{ss}\right\vert $ is small. On the other hand, by Lemma
\ref{decayofus}, $u_{s}$ has the following decay
\begin{equation}
u_{s}\leq2\left(  e^{0.85t}+\frac{4.9}{\sqrt{t}}\right)  e^{-0.85s}.\label{v8}%
\end{equation}
Note that here we can also use the Modica estimate $u_{s}\leq\frac{1-u^{2}%
}{\sqrt{2}}$ to estimate $u_{s}.$ In particular, $\left(  \ref{v8}\right)  $
implies that $u_{s}$ decays at least like $e^{-0.5t}$ along the line
$t=0.65s.$ We also should keep in mind that $\left(  \ref{v8}\right)  $ does
not mean that $u_{s}$ blows up as $t\rightarrow0.$ Indeed, by $u_{st}>0,$ we
know that $u_{s}\left(  s,0\right)  <u_{s}\left(  s,t\right)  $ in $\Omega.$
Now we recall that $\Phi_{0}$ decays like $s^{-1.8}e^{-\frac{t}{3}}$. In
particular, along the line $s=0.65t,$ $u_{s}$ decays faster than $L\Phi_{0}.$
From $\left(  \ref{v66}\right)  ,\left(  \ref{v7}\right)  ,\left(
\ref{v8}\right)  $ and $\left(  \ref{fi0}\right)  ,$ we can indeed verify
that
\[
L\Phi_{0}+L\Phi_{1}\leq0,\text{ in }E_{2}.
\]

Now we would like to consider $E_{3}:=\left\{  \left(  s,t\right)
:t<1/2\right\}  .$ Here $\left\vert C_{t}\right\vert $ could be large compared
to $\left\vert C_{s}\right\vert .$ Hence the above arguments does not work.
However, as $t\rightarrow0,$ we know that $u_{t}/t\rightarrow u_{tt}.$ This
implies that for $t$ small, $u_{t}$ is of the order $tu_{tt}.$ More precisely,
we have
\[
u_{t}\left(  s,t\right)  =\int_{0}^{t}u_{tt}\left(  s,r\right)  dr.
\]
On the other hand, we can estimate $u_{tt}$ using the fact that(Lemma
\ref{us/s}) $u_{s}/s\geq u_{ss}$ and the Allen-Cahn equation%
\[
u_{ss}+u_{tt}+\frac{3}{s}u_{s}+\frac{3}{t}u_{t}+u-u^{3}=0.
\]
Indeed, since $u_{s}/s+t/u_{t}<0$, we have
\[
\left\vert u_{ss}+u_{tt}\right\vert <u-u^{3}.
\]
Let $u_{tt}^{\ast}=\min_{r\in\left(  0,t\right)  }u_{tt}\left(  s,r\right)  .$
It turns out that $\left\vert 2\left(  C_{s}-C_{t}\right)  tu_{tt}^{\ast
}\right\vert <L\Phi_{0}.$ Hence $L\Phi\leq0$ in $E_{3}.$ See Figure 10.

Combing the above analysis in $E_{1},E_{2},E_{3},$ we get the desired
inequality. The proof is thus completed.
\end{proof}

Proposition \ref{supersolution} tells us that $\Phi$ is a supersolution and it
follows from standard arguments(see, for instance, \cite{C1, PW}) that the
saddle solution $u$ is stable in dimension $8.$ This finishes the proof of
Theorem \ref{main}.

\section{Stability in dimension $10$ and $12$}

In this section, we indicate the necessary changes needed in order to prove
the stability of the saddle solution in dimension $n=10$ and $n=12.$

We construct supersolution in the form $\Phi_{0}+\Phi_{1},$ where $\Phi
_{1}=fu_{s}+hu_{t},$ with
\begin{align*}
f\left(  s,t\right)   &  :=\tanh\left(  \frac{s}{t}\right)  \frac{s}%
{\sqrt{s^{2}+t^{2}}}\left(  s+t\right)  ^{-\frac{n-3}{2}},\\
h\left(  s,t\right)   &  :=-\tanh\left(  \frac{t}{s}\right)  \frac{t}%
{\sqrt{s^{2}+t^{2}}}\left(  s+t\right)  ^{-\frac{n-3}{2}}.
\end{align*}
In principle, the cases $n=10$ and $12$ are easier than the dimension $8$
case. Observe that in the definiton of $f,$ we don't need the term $1-\frac
{1}{4}e^{-\frac{s}{2t}}.$ In the previous section, this term is used to
control the behaviour of the supersolution near the point $\left(  s,t\right)
=\left(  5,2.5\right)  .$ The reason that we choose $\left(  s+t\right)
^{-\frac{n-3}{2}}$ is as follows. Let us consider the function $\phi:=\left(
s+t\right)  ^{\alpha}\left(  u_{s}-u_{t}\right)  .$ Then $L\phi$ is equal to
\begin{align*}
& \left(  s+t\right)  ^{\alpha-2}\left(  2\alpha\left(  \alpha-1\right)
+\left(  \frac{n}{2}-1\right)  \alpha\left(  s^{-1}+t^{-1}\right)  \left(
s+t\right)  +\left(  \frac{n}{2}-1\right)  s^{-2}\left(  s+t\right)
^{2}\right)  u_{s}\\
& -\left(  s+t\right)  ^{\alpha-2}\left(  2\alpha\left(  \alpha-1\right)
+\left(  \frac{n}{2}-1\right)  \alpha\left(  s^{-1}+t^{-1}\right)  \left(
s+t\right)  +\left(  \frac{n}{2}-1\right)  t^{-2}\left(  s+t\right)
^{2}\right)  u_{t}\\
& +2\alpha\left(  u_{ss}-u_{tt}\right)  \left(  s+t\right)  ^{\alpha-1}.
\end{align*}
When $s=t,$ as $t\rightarrow+\infty,$ $L\phi$ asymptotically looks like
\begin{align*}
& 2\left(  \alpha\left(  \alpha-1\right)  +\left(  n-2\right)  \alpha+\left(
n-2\right)  \right)  \left(  s+t\right)  ^{\alpha-2}\left(  u_{s}%
-u_{t}\right)  \\
& =2\left(  \alpha^{2}+\left(  n-3\right)  \alpha+n-2\right)  \left(
u_{s}-u_{t}\right)
\end{align*}
The roots of the equation $\alpha^{2}+\left(  n-3\right)  \alpha+n-2=0$ are
given by
\begin{equation}
\frac{3-n\pm\sqrt{\left(  n-3\right)  ^{2}-4\left(  n-2\right)  }}%
{2}.\label{indi}%
\end{equation}
These roots are real only when $n\geq8$ or $n=2.$ Furthermore, when $n\geq8,$
$L\phi$ will be negative at the Simons cone only if the exponent $\alpha$ is
between these two roots. At this stage, it is worth mentioning that the Jacobi
operator of the corresponding Simons cone has kernels of the form
$c_{1}r^{\beta_{1}}+c_{2}r^{\beta_{2}},$ where $\beta_{1,2}$ are also given by
$\left(  \ref{indi}\right)  .$

Now for the $\Phi_{0}$ part, we choose
\[
\Phi_{0}=c\left(  s^{-\frac{n-4}{2}}e^{-\frac{t}{3}}+t^{-\frac{n-4}{2}%
}e^{-\frac{s}{3}}\right)  ,
\]
where $c=0.001$. We can prove similar estimates of the derivatives of $u.$
More precisely, we have
\begin{align*}
u_{s}u+u_{ss} &  \geq0,\\
-u_{t}u-u_{st} &  \geq0,\\
\frac{n-2}{4}\left(  \frac{1}{t}-\frac{1}{s}\right)  u_{s}+u_{st}+u_{ss} &
\geq0,\\
u_{s}+u_{t}+u_{st}+u_{ss} &  \geq0,\\
\left(  \frac{1}{t^{2}}-\frac{1}{s^{2}}\right)  \left(  u_{s}-u_{t}+\frac
{1}{2}\sqrt{u_{s}-u_{t}}\right)  +u_{st}+u_{ss} &  \geq0,\\
-\frac{1}{t}u_{t}+u_{st}+u_{tt} &  \geq0.
\end{align*}
Then direct computation shows that $L\Phi\leq0.$ Hence the saddle solution is
also stable in dimension $10$ and $12.$


\begin{thebibliography}{99}                                                                                               %


\bibitem {AC}G. Alberti; L. Ambrosio; X. Cabr\'{e}, On a long-standing
conjecture of E. De Giorgi: symmetry in 3D for general nonlinearities and a
local minimality property. Special issue dedicated to Antonio Avantaggiati on
the occasion of his 70th birthday. {\em Acta Appl. Math. } 65 (2001), no. 1-3, 9--33.

\bibitem {Alen}H. Alencar; A. Barros; O. Palmas; J. G. Reyes; W. Santos,
$O\left(  m\right)  \times O\left(  n\right)  $-invariant minimal
hypersurfaces in $\mathbb{R}^{m+n}$. {\em Ann. Global Anal. Geom.} 27 (2005), no. 2, 179--199.

\bibitem {AmC}L. Ambrosio; X. Cabr\'{e}, Entire solutions of semilinear
elliptic equations in R3 and a conjecture of De Giorgi. {\em J. Amer. Math. Soc. } 13
(2000), no. 4, 725--739.

\bibitem {BDG}E. Bombieri; De Giorgi; Giusti, Minimal cones and the Bernstein
problem. {\em Invent. Math.} 7 (1969), 243--268.

\bibitem {C}X. Cabr\'{e}; J. Terra, Saddle-shaped solutions of bistable
diffusion equations in all of $\mathbb{R}^{2m}$. {\em J. Eur. Math. Soc. (JEMS)} 11
(2009), no. 4, 819--843.

\bibitem {C2}X. Cabr\'{e}; J. Terra, Qualitative properties of saddle-shaped
solutions to bistable diffusion equations. {\em Comm. Partial Differential
Equations } 35 (2010), no. 11, 1923--1957.

\bibitem {C1}X. Cabr\'{e}, Uniqueness and stability of saddle-shaped solutions
to the Allen-Cahn equation. J. Math. Pures Appl. (9) 98 (2012), no. 3, 239--256.

\bibitem {Dang}H. Dang;\quad P. C.\quad Fife; L. A.\quad Peletier, Saddle
solutions of the bistable diffusion equation. {\em Z. Angew. Math. Phys.}  43 (1992),
no. 6, 984--998.

\bibitem {DeG}E. De Giorgi, Convergence problems for functionals and
operators, Proc. Int. Meeting on Recent Methods in Nonlinear Analysis (Rome,
1978), 131188, Pitagora, Bologna (1979).

\bibitem {De}G. De Philippis; E. Paolini, A short proof of the minimality of
the Simons cone, {\em Rend. Sem. Mat. Univ. Padova}, 121(2009), 233--241.

\bibitem {M1}M. Del Pino; M. Kowalczyk; J. Wei, Entire solutions of the
Allen-Cahn equation and complete embedded minimal surfaces of finite total
curvature in $\mathbb{R}^{3}$. {\em J. Differential Geom.} 93 (2013), no. 1, 67--131.

\bibitem {M2}M. Del Pino; M. Kowalczyk; J. Wei, On De Giorgi's conjecture in
dimension $N\geq9$. {\em Ann. of Math.}  (2) 174 (2011), no. 3, 1485--1569.

\bibitem {M3}M. Del Pino; M. Kowalczyk; J. Wei, Entire solutions of the
Allen-Cahn equation and complete embedded minimal surfaces of finite total
curvature in $\mathbb{R}^{3}$. {\em J. Differential Geom.}  93 (2013), no. 1, 67--131.

\bibitem {F4}L. Dupaigne; A. Farina, Stable solutions of $-\Delta u=f\left(
u\right)  $. {\em J. Eur. Math. Soc. (JEMS) } 12 (2010), no. 4, 855--882.

\bibitem {F5}L. Dupaigne; A. Farina, Classification and Liouville-type
theorems for semilinear elliptic equations in unbounded domains, arXiv:1912.11639.

\bibitem {Farina}A. Farina; E. Valdinoci, The state of the art for a
conjecture of De Giorgi and related problems, in: Recent Progress on
Reaction--Diffusion Systems and Viscosity Solutions, World Sci. Publ.,
Hackensack, NJ, 2009, pp. 74--96.

\bibitem {Farina2}A. Farina; E. Valdinoci, 1D symmetry for semilinear PDEs
from the limit interface of the solution.{\em  Comm. Partial Differential Equations}
41 (2016), no. 4, 665--682.

\bibitem {F3}A. Farina; E. Valdinoci, 1D symmetry for solutions of semilinear
and quasilinear elliptic equations. {\em Trans. Amer. Math. Soc. } 363 (2011), no. 2, 579--609.

\bibitem {G}N. Ghoussoub; C. Gui, On a conjecture of De Giorgi and some
related problems. {\em Math. Ann.} 311 (1998), no. 3, 481--491.

\bibitem {Gui}C. Gui, Hamiltonian identities for elliptic partial differential
equations. {\em J. Funct. Anal.}  254 (2008), no. 4, 904--933.

\bibitem {HS}R. Hardt; L. Simon, Area minimizing hypersurfaces with isolated
singularities. {\em J. Reine Angew. Math. } 362 (1985), 102--129.

\bibitem {JM}D. Jerison; R. Monneau, Towards a counter-example to a conjecture
of De Giorgi in high dimensions. {\em Ann. Mat. Pura Appl. } (4) 183 (2004), no. 4, 439--467.

\bibitem {ML}M. Kowalczyk; Y. Liu; F. Pacard, The classification of four-end
solutions to the Allen-Cahn equation on the plane. {\em  Anal. PDE } 6 (2013), no. 7, 1675--1718.

\bibitem {Lin}F. H. Lin, Minimality and stability of minimal hypersurfaces in
$\mathbb{R}^{n},$ {\em Bull. Austral. Math. Soc.} 36(1987), 209--214.

\bibitem {LW}Y. Liu; J. Wei, A complete classification of finite Morse index
solutions to elliptic sine-Gordon equation in the plane, arxiv 1806.06921.

\bibitem {Liu}Y. Liu; K. Wang; J. Wei, Global minimizers of the Allen-Cahn
equation in dimension $n\geq8$. {\em J. Math. Pures Appl.} (9) 108 (2017), no. 6, 818--840.

\bibitem {L}G. Lawlor, A sufficient criterion for a cone to be
area-minimizing. {\em  Mem. Amer. Math. Soc. } 91 (1991), no. 446, vi+111 pp.

\bibitem {Lawson}B. Lawson, The equivariant Plateau problem and interior
regularity, {\em Trans. Amer. Math. Soc.} 173 (1972).

\bibitem {PW}F. Pacard; J. Wei, Stable solutions of the Allen-Cahn equation in
dimension 8 and minimal cones. {\em J. Funct. Anal.}  264 (2013), no. 5, 1131--1167.

\bibitem {S}O. Savin, Regularity of flat level sets in phase transitions. {\em Ann.
of Math.} (2) 169 (2009), no. 1, 41--78.

\bibitem {Sch}M. Schatzman, On the stability of the saddle solution of
Allen-Cahn's equation. {\em Proc. Roy. Soc. Edinburgh Sect. A } 125 (1995), no. 6, 1241--1275.

\bibitem {Simon}L. Simon, Entire solutions of the minimal surface equation, {\em J.
Differential Geometry } 30 (1989), 643--688.

\bibitem {Simons}J. Simons, Minimal varieties in riemannian manifolds. {\em  Ann. of
Math.} (2) 88(1968), 62--105.
\end{thebibliography}
\end{document}